\theoremstyle{plain}
\newtheorem{theorem}{Theorem}[section]
\newtheorem{lemma}[theorem]{Lemma}
\theoremstyle{definition}
\newtheorem{definition}[theorem]{Definition}
\theoremstyle{remark}
\newtheorem{remark}{Remark}
\begin{document}

\title{A blow-up result for the semilinear Euler-Poisson-Darboux-Tricomi equation with critical power nonlinearity}

\author[1]{Ning-An Lai}
\author[2]{Alessandro Palmieri}
\author[3]{Hiroyuki Takamura}
\affil[1]{School of Mathematical Sciences, Zhejiang Normal University, Jinhua 321004, China}
\affil[2]{Department of Mathematics, University of Bari, Via E. Orabona 4, 70125 Bari, Italy}
\affil[3]{ Mathematical Institute, Tohoku University, Aoba, Sendai 980-8578, Japan}

\renewcommand\Authands{ and }

\maketitle

\begin{abstract}
In this paper, we prove a blow-up result for a generalized semilinear Euler-Poisson-Darboux equation with polynomially growing speed of propagation, when the power of the semilinear term is a shift of the Strauss' exponent for the classical semilinear wave equation. Our proof is based on a comparison argument of Kato-type for a second-order ODE with time-dependent coefficients, an integral representation formula by Yagdjian and the Radon transform. As byproduct of our method, we derive upper bound estimates for the lifespan which coincide with the sharp one for the classical semilinear wave equation in the critical case.

\end{abstract}

\begin{flushleft}
\textbf{Keywords} critical exponent, Strauss' exponent, blow-up, power nonlinearity, Tricomi equation, Euler-Poisson-Darboux equation
\end{flushleft}

\begin{flushleft}
\textbf{AMS Classification (2020)} 
35B33, 35B44, 35C15, 35L15, 35L71, 35Q05.
\end{flushleft}

\section{Introduction}

Let us consider the following semilinear Cauchy problem with \emph{power nonlinearity} $|u|^p$
\begin{align}\label{semilinear EPDT}
\begin{cases} \partial_t^2u-t^{2\ell}\Delta u + \mu t^{-1} \partial_t u+\nu^2 t^{-2} u=| u|^p, &  x\in \mathbb{R}^n, \ t>1,\\
u(1,x)=\varepsilon u_0(x), & x\in \mathbb{R}^n, \\ \partial_t u(1,x)=\varepsilon u_1(x), & x\in \mathbb{R}^n,
\end{cases}
\end{align} where $n\geqslant 1$ denotes the space dimension, $\ell >-1$, $\mu,\nu^2$ are nonnegative real constants, $p>1$, $\varepsilon$ is a positive quantity describing the size of the data and $u_0,u_1$ are compactly supported. 

For brevity, we  call the equation in \eqref{semilinear EPDT} \emph{semilinear Euler-Poisson-Darboux-Tricomi equation} (semilinear EPDT equation) since for  $\ell=0$ and $\nu^2=0$, the linearized equation is called the \emph{Euler-Poisson-Darboux equation} \cite{Eu,Poi,Da,DW53,Co56} while for $\mu=\nu^2=0$ the second-order operator $\partial^2_t-t^{2\ell}\Delta$ is known as \emph{generalized Tricomi operator}, see \cite{Tri,Wei54,DL71}.

As usual for wave equations with time dependent coefficients $\mu t^{-1}$ for the damping term and $\nu^2 t^{-2}$ for the mass term, we introduce the quantity
\begin{align}\label{def delta}
\delta\doteq (\mu-1)^2-4\nu^2
\end{align} 
which is helpful to describe the interplay between the damping and the mass term (cf. \cite{NPR16,PalRei18,Pal18odd,Pal18even,DabbPal18,Pal19RF}). 
 We underline that, roughly speaking, the requirement $\delta\geqslant 0$ guarantees that $\mu t^{-1} \partial_t u$ has in some way a dominant influence over $\nu^2 t^{-2} u$. 

 Recently, in \cite{Pal21} it is show that for $\mu,\nu^2\geqslant 0$ 
satisfying $\delta\geqslant 0$ a blow-up result holds for suitable local solutions to \eqref{semilinear EPDT} when the power $p$ in the nonlinear term fulfills
\begin{align}\label{subcritical range p}
1<p< \max\left\{p_{\mathrm{Str}}\!\left(n+\tfrac{\mu}{\ell+1},\ell\right),p_{\mathrm{Fuj}}\!\left((\ell+1)n+\tfrac{\mu-1}{2}-\tfrac{\sqrt{(\mu-1)^2-4\nu^2}}{2}\right) \right\}.
\end{align}  Here $p_{\mathrm{Fuj}}(n)\doteq 1+\frac{2}{n}$ denotes the \emph{Fujita exponent} (named after the author of \cite{Fuj66}) and its presence can be justified as a consequence of diffusion phenomena between the solutions to the corresponding linearized model and those to some suitable parabolic equation. On the other hand, the exponent $p_{\mathrm{Str}}(n,\ell)$ is the critical exponent for the semilinear generalized-Tricomi equation established in the series of papers  \cite{HWY17,HWY16,HWY17d2,TL19,HWY20,Sun21}, which generalize the celebrated \emph{Strauss' exponent} (cf. \cite{John79,Kato80,Str81,Glas81B,Glas81,Sid84,Scha85,LinSog96,Geo97,YZ06,Zhou07}) for the classical semilinear wave equation, that is recovered for $\ell=0$. More precisely,  $p_{\mathrm{Str}}(n,\ell)$ is the larger root of the quadratic equation
\begin{equation}\label{crit exponent tricomi}
\left(\tfrac{n-1}{2}+\tfrac{\ell}{2(\ell+1)}\right)p^2-\left(\tfrac{n+1}{2}-\tfrac{3\ell}{2(\ell+1)}\right)p-1=0.
\end{equation}
Hence, the blow-up range obtained in \cite{Pal21}, described in \eqref{subcritical range p}, is expressed through shifts of Fujita exponent and of the generalized Strauss' exponents defined by \eqref{crit exponent tricomi}.

Over the last decade, several blow-up results have been proved for the Cauchy problem \eqref{semilinear EPDT} in the special case $\ell=0$; this equation is also known as wave equation with scale-invariant damping and mass terms. Furthermore, a number of different blow-up techniques have been employed to show the nonexistence of global in time solutions under suitable assumptions on $p$ and on the Cauchy data. More precisely, the following methods have been applied:
\begin{itemize}
\item the \emph{test function method} (in the formulation of Mitidieri-Pohozaev \cite{MPbook}) that exploits the scaling-properties of the partial differential operator, cf. \cite{DL13,Wak14,NPR16};
\item a \emph{modified test function method} in which the choice of the test function is deeply related to the fundamental solution of the partial differential operator, cf. \cite{IS17,PT18};
\item a \emph{Kato-type comparison principle} for second-order ODE for studying the blow-up dynamic of the space average of  a local solution, cf. \cite{DLR15,LTW17,PalRei18};
\item an \emph{iteration argument} to derive a sequence of lower bound estimates for a suitable function associated with a  local solution, cf. \cite{PT18,LST20,CGL21,GL22}.
\end{itemize} 

We underline that the choice of the most suitable technique among the ones listed above depends heavily on the range for the constants $\mu$ and $\nu^2$ in the time-dependent coefficients and their mutual interaction described through the number $\delta$ defined in \eqref{def delta}. We stress that the quantity $\delta$ is an invariant for the linearized equation under multiplication for powers of $t$: if $\varphi$ solves the linear equation $$\varphi_{tt}-t^{2\ell}\Delta \varphi+\mu t^{-1} \varphi_t +\nu^2t^{-2} \varphi=0$$ then, fixed $\vartheta \in\mathbb{R}$, the function $\psi=t^\vartheta\varphi$ solves $$\psi_{tt}-t^{2\ell}\Delta \psi+(\mu-2\vartheta) t^{-1} \varphi_t +(\vartheta^2-(\mu-1)\vartheta+\nu^2)t^{-2} \varphi=0$$ with 
\begin{align*}
\delta(\psi)& =(\mu-2\vartheta-1)^2-4(\vartheta^2-(\mu-1)\vartheta+\nu^2) \\ & =(\mu-1)^2 -4\vartheta(\mu-1)+4\vartheta^2-4(\vartheta^2-(\mu-1)\vartheta+\nu^2) = (\mu-1)^2-4\nu^2= \delta(\varphi).
\end{align*}

The aim of the present paper is to prove that the local in time solutions to \eqref{semilinear EPDT} blow-up in finite time when the power of the nonlinear terms $p$ is equal to the threshold value $p_{\mathrm{Str}}(n+\frac{\mu}{\ell+1},\ell)$, under suitable sign assumptions for the Cauchy data, and to derive upper-bound estimates for the lifespan. Other attempts have been done in the literature to study this type of critical case (cf. \cite{PalRei18,PT18,LST20,Pal20EdSmu}). In particular, we mention the critical case for $\ell\in (-1,0)$ and $\nu^2=0$ was treated in \cite{Pal20EdSmu} by using the approach for the classical wave equation introduced in \cite{WakYor19}. Unfortunately, in \cite{Pal20EdSmu} technical restrictions on the range for $\mu$ appear. 
In our result, these kinds of restrictions are not present (we only assume that $\delta$ is nonnegative and that $p=p_{\mathrm{Str}}(n+\frac{\mu}{\ell+1},\ell)$). Indeed, we use a more precise (though quite elaborate) analysis of the time evolution of the space average of a local solution, which is based on the approach from \cite{TakWak11} for the critical semilinear wave equation in higher dimensions.

\begin{remark} The blow-up in finite time of local solution in the other critical case $p=p_{\mathrm{Fuj}}\left((\ell+1)n+\tfrac{\mu-1-\sqrt{\delta}}{2}\right)$ has been investigated in \cite[Section 3]{Pal21}. In this second threshold case, it is easier to study the blow-up in finite time of a local solution, as one can exploit the diffusive nature of the model without the necessity to estimate the fundamental solution.
\end{remark}

\paragraph{Notations}  We denote by
\begin{align*} 
\phi_\ell(t)\doteq \frac{t^{\ell+1}}{\ell+1} 
\end{align*} the primitive of the speed of propagation $t^{\ell}$  vanishing at $t=0$. Hence, the function $$A_\ell(t)\doteq \phi_\ell(t)-\phi_\ell(1)$$ describes the amplitude of the light-cone for the Cauchy problem with data prescribed at the initial time $t_0=1$.
$B_R$ is the ball in $\mathbb{R}^n$ around the origin with radius $R$. By $f\lesssim g$ we mean that there exists a  constant $C>0$ such that $f\leqslant Cg$ and, similarly, for the notation $f\gtrsim g$.
As in the introduction, we denote by $p_{\mathrm{Fuj}}(n)$ the Fujita exponent and by $p_{\mathrm{Str}}(n,\ell)$ the Strauss-type exponent. 

\subsection{Main results}

Let us start by recalling the notion of weak solutions to  \eqref{semilinear EPDT} that has been employed in \cite{Pal21}. We point out that, even though we call them weak solutions, actually, for these solutions we ask more regularity than for traditional distributional solutions with respect to the time. This additional regularity is necessary to get a space average that is a $\mathcal{C}^2$ function with respect to t.

\begin{definition} \label{def sol} Let $u_0,u_1\in L^1_{\mathrm{loc}}(\mathbb{R}^n)$ such that $\mathrm{supp} \, u_0, \mathrm{supp} \, u_1 \subset B_R$ for some $R>0$. We say that $$u\in\mathcal{C}\big([1,T),W^{1,1}_{\mathrm{loc}}(\mathbb{R}^n)\big)\cap \mathcal{C}^1\big([1,T),L^{1}_{\mathrm{loc}}(\mathbb{R}^n)\big)\cap L^p_{\mathrm{loc}}\big((1,T)\times \mathbb{R}^n\big)$$ is a \emph{weak solution} to \eqref{semilinear EPDT} on $[1,T)$ if $u(1,\cdot)=\varepsilon u_0$ in $L^{1}_{\mathrm{loc}}(\mathbb{R}^n)$, $u$ fulfills the support condition 
\begin{align*}
\mathrm{supp} \, u(t,\cdot) \subset B_{R+\phi_\ell(t)-\phi_\ell(1)} \qquad \mbox{for any} \ t\in(1,T),
\end{align*} 
 and the integral identity
\begin{align}
& \int_{\mathbb{R}^n} \partial_t u(t,x)\phi(t,x) \, \mathrm{d}x+\int_1^t\int_{\mathbb{R}^n} \big(-\partial_t u(s,x)\phi_s(s,x)+s^{2\ell}\nabla u(s,x) \cdot \nabla \phi(s,x)\big)\mathrm{d}x\, \mathrm{d}s \notag  \\
 & \qquad + \int_1^t\int_{\mathbb{R}^n} \big(\mu s^{-1}\partial_t u(s,x)\phi(s,x)+\nu^2 s^{-2} u(s,x)  \phi(s,x)\big)\mathrm{d}x\, \mathrm{d}s \notag \\
& \quad = \varepsilon \int_{\mathbb{R}^n} u_1(x)\phi(1,x) \, \mathrm{d}x+ \int_1^t\int_{\mathbb{R}^n}|u(s,x)|^p\phi(s,x) \, \mathrm{d}x\, \mathrm{d}s \label{integral rel def sol}
\end{align} holds for any $t\in(1,T)$ and any test function $\phi\in\mathcal{C}^\infty_0\big([1,T)\times\mathbb{R}^n\big)$.
\end{definition}

We stress that in this paper, we will work with classical solutions to \eqref{semilinear EPDT}:  the reason is that we apply an integral representation formula in which we have to evaluate pointwisely the data and the nonlinearity. This being said, it is evident that classical solutions to \eqref{semilinear EPDT} are weak solutions (in the sense of  Definition \ref{def sol}) as well. Since we are going to use some inequalities from \cite{Pal21}, we had to recall the definition for weak solutions in the aforementioned sense, in spite of the fact that in our main result we work with classical solutions.

\begin{theorem}\label{Thm main} Let $\ell>-1$ and $\mu,\nu^2\geqslant 0$ such that $\delta\geqslant 0$. Let us assume that the exponent $p$ of the nonlinear term satisfies
\begin{align*}
p=p_{\mathrm{Str}}\!\left(n+\tfrac{\mu}{\ell+1},\ell\right).
\end{align*} 
Let $(u_0,u_1)\in \mathcal{C}^2_0(\mathbb{R}^n) \times \mathcal{C}^1_0(\mathbb{R}^n)$ be nonnegative,  nontrivial and compactly supported functions with supports contained in  $B_R$ for some $R>0$ such that $u_1+\frac{\mu-1-\sqrt{\delta}}{2}u_0\geqslant 0$ and
\begin{align}\label{integral assumption Cauchy data}
 \int_{\mathbb{R}^n} \left(u_1(x)+\tfrac{\mu-1+\sqrt{\delta}}{2}u_0(x) \right)\mathrm{d}x >0.
\end{align} 
Let $u\in \mathcal{C}^2\big([1,T) \times \mathbb{R}^n\big)$ be a classical solution to \eqref{semilinear EPDT} with lifespan $T=T(\varepsilon)$.

Then, there exists a positive constant $\varepsilon_0 = \varepsilon_0(u_0,u_1,n,p,\ell,\mu,\nu^2,R)$ such that for any $\varepsilon \in (0,\varepsilon_0]$ the weak solution $u$ blows up in finite time. Furthermore, the upper bound estimate for the lifespan
\begin{align}\label{lifespan est thm main}
T(\varepsilon)\leqslant \exp\left(E\varepsilon^{-p(p-1)}\right) 
\end{align}
holds, where the positive constant $E$ is independent of $\varepsilon$.
\end{theorem}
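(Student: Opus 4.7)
The approach is to track the space average $F(t) \doteq \int_{\mathbb{R}^n} u(t,x)\,\mathrm{d}x$ of a classical solution $u$. Integrating the equation in \eqref{semilinear EPDT} over $\mathbb{R}^n$ and using the finite-speed-of-propagation support condition from Definition \ref{def sol} to kill the Laplacian contribution, $F$ solves
\begin{align*}
F''(t)+\mu t^{-1} F'(t)+\nu^2 t^{-2}F(t) = \int_{\mathbb{R}^n}|u(t,x)|^p\,\mathrm{d}x,
\end{align*}
with $F(1)=\varepsilon\int u_0\,\mathrm{d}x$ and $F'(1)=\varepsilon\int u_1\,\mathrm{d}x$. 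By H\"older's inequality and $\supp u(t,\cdot)\subset B_{R+A_\ell(t)}$, the right-hand side is bounded below by a constant multiple of $(1+A_\ell(t))^{-n(p-1)}F(t)^p$. Since $\delta\geqslant 0$, the associated homogeneous Euler equation has characteristic roots $\lambda_{\pm}=\tfrac{1-\mu\pm\sqrt{\delta}}{2}$; variation of constants shows that the coefficient of the dominant mode $t^{\lambda_+}$ equals $\tfrac{\varepsilon}{\sqrt{\delta}}\int\bigl(u_1+\tfrac{\mu-1+\sqrt\delta}{2}u_0\bigr)\mathrm{d}x$, which is strictly positive by assumption \eqref{integral assumption Cauchy data}. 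Combining this with the pointwise sign condition $u_1+\tfrac{\mu-1-\sqrt{\delta}}{2}u_0\geqslant 0$ (which propagates positivity of $F$ for $t\geqslant 1$) and with the nonnegativity of the nonlinear source gives the baseline lower bound $F(t)\gtrsim \varepsilon\, t^{-(\mu-1-\sqrt{\delta})/2}$.

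For the critical exponent $p=p_{\mathrm{Str}}(n+\tfrac{\mu}{\ell+1},\ell)$, the Kato-type ODE comparison used in the subcritical case of \cite{Pal21} yields only a polynomial lifespan bound, so I would follow the strategy of \cite{TakWak11} and upgrade the averaged information into a \emph{pointwise} one. Yagdjian's integral representation formula writes the solution of the linear equation with damping $\mu t^{-1}$ and mass $\nu^2 t^{-2}$ as an integral, weighted by a hypergeometric kernel, of the free solution to an auxiliary standard wave equation; applying the Radon transform in $x$ then translates the averaged lower bound on $F$ into a pointwise lower bound for $u(t,x)$ along expanding light-cones. Inserting this refined estimate into the nonlinear source and working with a weighted functional
\begin{align*}
G(t) \doteq \int_{\mathbb{R}^n} u(t,x)\,\psi(t,x)\,\mathrm{d}x,
\end{align*}
with $\psi$ built from a positive solution of the conjugate linear equation, I would derive a slicing integral inequality $G(t)\gtrsim \int_1^t K(t,s)\,G(s)^p\,\mathrm{d}s$ with an explicit kernel $K$ encoding the critical-case logarithmic correction.

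Iterating this inequality along a suitably chosen sequence $\{t_j\}$ generates a hierarchy of lower bounds of the schematic form $G(t)\gtrsim C_j\,\varepsilon^{1+p+\cdots+p^{j}}(\log A_\ell(t))^{a_j}A_\ell(t)^{-b}$ in which, thanks to the criticality of $p$, the exponent $b$ stabilises while $a_j$ grows geometrically. Optimising over $j$ the condition that this lower bound eventually exceeds any a priori upper bound on $G$ forces $\log A_\ell(T)\lesssim \varepsilon^{-p(p-1)}$, which, recalling that $A_\ell(t)\sim t^{\ell+1}$, gives the lifespan estimate \eqref{lifespan est thm main}. I anticipate the main obstacle to be the quantitative control of Yagdjian's hypergeometric kernel uniformly across the whole parameter region $\delta\geqslant 0$: it is precisely this step that should allow the argument to avoid the technical restrictions on $\mu$ present in \cite{Pal20EdSmu} and that pins down the exact exponent $p(p-1)$ appearing in \eqref{lifespan est thm main}.
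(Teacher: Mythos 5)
Your overall strategy -- space average, Yagdjian representation, reduction to one dimension via the Radon transform, iterative improvement with logarithmic factors, and a Takamura--Wakasa-type critical argument yielding $T(\varepsilon)\leqslant\exp(E\varepsilon^{-p(p-1)})$ -- is indeed the route the paper takes, and you correctly identify the dominant mode $t^{-r_2}$ of $U$ and its coefficient $\varepsilon\sqrt{\delta}^{-1}\int(u_1+r_1u_0)$. However, there are three genuine gaps in the way you propose to execute it. First, the claim that the Radon transform ``translates the averaged lower bound on $F$ into a pointwise lower bound for $u(t,x)$'' is not what happens and would not work directly: $\mathcal{R}[u]$ is itself an integral of $u$ over hyperplanes. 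What the paper actually does is use the nonnegativity of the Yagdjian kernel to get a lower bound for $\mathcal{R}[u](t,\varrho)$ in terms of $\int b^{r_2+1}\|u(b,\cdot)\|_{L^p}^p\,\mathrm{d}b$, and then -- crucially -- transfers back from $\mathcal{R}[u]$ to $\|u(t,\cdot)\|_{L^p(\mathbb{R}^n)}^p$ via the uniform $L^p(\mathbb{R})$-boundedness of the family of fractional-integral-type operators $\mathcal{T}_t$ (a nontrivial harmonic-analysis input, distinct for $n=2$ and $n\geqslant 3$, imported from \cite{PalRei18}). Without identifying this step, you have no way to close the iteration.

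Second, the object you iterate on should be $\|u(t,\cdot)\|^p_{L^p(\mathbb{R}^n)}$, not a weighted functional $G(t)=\int u\,\psi$. A weighted average of this kind is used in \cite{Pal21} only once, to produce the \emph{seed} estimate \eqref{1st low bound ||u||^p}; the self-improving inequality that generates the geometric growth of logarithmic powers is \eqref{iteration frame ||u(t)||^p Lp }, which has a nested structure $\|u(t)\|_{L^p}^p\gtrsim\int_0^{\cdots}(\cdots)\bigl(\int_1^{\cdots}\|u(b)\|_{L^p}^p\,\mathrm{d}b\bigr)^p\mathrm{d}\varrho$ -- not the single-integral form $G(t)\gtrsim\int K(t,s)G(s)^p\mathrm{d}s$ you write; a slicing inequality of that form for a test-function average does not follow from the Yagdjian representation here. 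Third, and most importantly, your conclusion (``optimising over $j$ the condition that the lower bound exceeds any a priori upper bound'') is vague because no a priori upper bound on $U$ is available. The paper instead proves a genuinely new comparison result, Lemma \ref{Lemma Kato-type} (a Kato-type lemma for $G''+\mu t^{-1}G'+\nu^2t^{-2}G\geqslant Bt^{-q}|G|^p$ in the scale-invariant \emph{critical} case $a(p-1)=q-2$), and feeds the logarithmically improved lower bound for $U(t)$ into it once the multiplicative constant $K_j(t,\varepsilon)$ exceeds the threshold $K_0$. This lemma is the linchpin that removes the restrictions on $\mu$ present in earlier work, and your proposal should make its role explicit rather than hoping the contradiction falls out of the iteration by itself.
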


\begin{remark} The exponent $p_{\mathrm{Str}}\big(n+\tfrac{\mu}{\ell+1},\ell\big)$ is obtained from the Strauss-type exponent $p_{\mathrm{Str}}(n,\ell)$ defined through \eqref{crit exponent tricomi} by a shift of magnitude $\tfrac{\mu}{\ell+1}$ in the space dimension. In other words, $p_{\mathrm{Str}}\big(n+\tfrac{\mu}{\ell+1},\ell\big)$ is the larger root to the quadratic equation
\begin{align}\label{equation for p crit}
\left(\tfrac{n-1}{2}(\ell+1)+\tfrac{\ell+\mu}{2}\right) p^2 -\left(\tfrac{n+1}{2}(\ell+1)+\tfrac{\mu-3\ell}{2}\right) p -(\ell+1) =0.
\end{align}
\end{remark}

The proof of Theorem \ref{Thm main} follows the ideas in \cite{TakWak11,PalTak22} and is organized as follows: in Section \ref{Section ODI Kato} we prove a comparison argument for a second-order ODE, which generalizes Kato's lemma in the critical case; in Section \ref{Section Radon transform} we reduce the problem to the 1-dimensional case by means of Radon transform and we apply an explicit integral representation formula of Yagdjian-type recalled in Section \ref{Section Yagdjian integral representation}  to obtain an iteration frame for the $L^p(\mathbb{R}^n)$-norm of a local solution; finally, in Section \ref{Section iteration  argument} we derive a sequence of lower bound estimates for the above mentioned $L^p(\mathbb{R}^n)$-norm with additional logarithmic terms and then we use these estimates to prove the blow-up result by applying the Kato-type lemma in the critical case to the space average of a local solution.
\section{Preliminary results}

In this section, we recall some estimates that are proved in \cite[Section 2]{Pal21} and that we are going to use in the proof our main result.

\subsection{Iteration frame for the space average of a local solution}

Let $u\in\mathcal{C}^2\big([1,T)\times \mathbb{R}^n\big)$ a classical solution to \eqref{semilinear EPDT}. Let us denote by
\begin{align*}
U(t)\doteq \int_{\mathbb{R}^n} u(t,x)\, \mathrm{d}x \qquad \mbox{for} \ t\in [1,T) ,
\end{align*} the spatial average of $u$. In \cite[Section 2.1]{Pal21}, by the identity 
\begin{align}
U''(t) + \mu t^{-1} U'(t)+ \nu^2 t^{-2} U(t)= \int_{\mathbb{R}^n} |u(t,x)|^p \,  \mathrm{d}x \label{ODE for U}
\end{align}
it is easy to derive the following representation for $U$
\begin{align} \label{Representation for U}
U(t) &=U_{\mathrm{lin}}(t)  + t^{-r_1} \int_1^t s^{r_1-r_2-1} \int_1^s \tau^{r_2+1} \, \|u(\tau,\cdot)\|^p_{L^p(\mathbb{R}^n)} \, \mathrm{d}\tau \, \mathrm{d}s,
\end{align} 
where $r_1,r_2$ are the roots of the equation $r^2-(\mu-1)r+\nu^2=0$ and 
\begin{align*}
U_{\mathrm{lin}}(t) \doteq \begin{cases}
\displaystyle{\tfrac{r_1 t^{-r_2}-r_2 t^{-r_1}}{r_1-r_2}\, U(1)+\tfrac{t^{-r_2}-t^{-r_1}}{r_1-r_2} \, U'(1)} & \mbox{if} \ \ \delta>0, \\
t^{-r_1}(1+r_1\ln t)\, U(1)+t^{-r_1}\ln t \, U'(1) & \mbox{if} \ \ \delta=0.
\end{cases}
\end{align*}
Furthermore, by employing H\"older's inequality and the support condition for $u$, we have the following iteration frame for $U$
\begin{align*}
U(t) & \geqslant C t^{-r_1} \int_1^t s^{r_1-r_2-1} \int_1^s \tau^{r_2+1} \tau^{-n(\ell+1)(p-1)} (U(\tau))^p\, \mathrm{d}\tau \, \mathrm{d}s 
\end{align*} for a suitable positive constant $C$ that depends on $n,p,\ell$, see \cite[Eq. (21)]{Pal21}.

\subsection{First lower bound estimate for $\|u(t,\cdot)\|^p_{L^p(\mathbb{R}^n)}$}

In \cite[Subsection 2.2]{Pal21}, by studying the lower bound of  a weighted space average of $u$, we obtained the following lower bound estimate:
\begin{align}\label{1st low bound ||u||^p}
\|u(t,\cdot)\|^p_{L^p(\mathbb{R}^n)}\geqslant \widetilde{C} \varepsilon^p t^{-\left(\frac{n-1}{2}(\ell+1)+\frac{\ell+\mu}{2}\right)p+(n-1)(\ell+1)}
\end{align}
for $t\geqslant T_2$, where $\widetilde{C}$ is a positive constant depending on $n,\ell,\mu,\nu^2,R,u_0,u_1$ and $T_2=T_2(\mu,\nu^2,\ell)>2$.

\section{ODI comparison argument in the critical case}
\label{Section ODI Kato}

In this section we state and prove a Kato-type lemma for a second order ODI (\emph{ordinary differential inequality}) with special time-dependent coefficients. In particular, we consider a critical condition for the parameter appearing in the lower bound for the time-dependent function.

\begin{lemma}[Kato-type lemma in the critical scale-invariant case] \label{Lemma Kato-type}

Let $\mu,\nu^2$ be nonnegative real numbers such that $\delta=(\mu-1)^2-4\nu^2\geqslant 0$. We set 
\begin{align}\label{def r1,r2}
r_1\doteq \frac{1}{2}\big(\mu-1+\sqrt{\delta}\big) \quad \mbox{and} \quad r_2\doteq \frac{1}{2}\big(\mu-1-\sqrt{\delta}\big). 
\end{align}
Let $p>1$, $q\geqslant 0$ and $a\in\mathbb{R}$ such that
\begin{align}\label{critical condition a,p,q}
a(p-1)=q-2.
\end{align}
Assume that $G\in\mathcal{C}^2([1,T))$ fulfills
\begin{align}
& G''(t)+\mu t^{-1} G'(t)+\nu^2 t^{-2} G(t)\geqslant B t^{-q} |G(t)|^p && \mbox{for} \ t\in[1,T), \label{Kato lemma ODI G} \\
& G(t) \geqslant K t^a   && \mbox{for} \ t\in[T_0,T), \label{Kato lemma lower bound G} \\
& G(1), G'(1)\geqslant 0 \quad \mbox{and} \quad G'(1)+r_1G(1)>0, \label{Kato lemma initial condition G}
\end{align} for some $T_0\in [1,T)$ and $B,K>0$. Let us define $T_1\doteq \max\{T_0,\widetilde{T}_0\}$, where
\begin{align}\label{def T0 tilde}
\widetilde{T}_0=\widetilde{T}_0(\mu,\nu^2,G(1),G'(1))\doteq \begin{cases} \left(1+\frac{(r_1-r_2)G(1)}{G'(1)+r_1 G(1)}\right)^{\frac{1}{r_1-r_2}} & \mbox{for} \ r_1\neq r_2, \\
\exp\left(\frac{G(1)}{G'(1)+r_1 G(1)}\right) & \mbox{for} \ r_1=	 r_2,
 \end{cases}
\end{align}
 and
\begin{align}\label{def K0}
& K_0\doteq 
\begin{cases}
\left(\frac{p+1}{B}\right)^{\frac{1}{p-1}} \left(\frac{a+r_1}{1-2^{-\beta(a+r_1)}}\right)^{\frac{2}{p-1}}  & \mbox{if} \  a+r_1>0, \\
\left(\frac{p+1}{B}\right)^{\frac{1}{p-1}}  (\beta \ln 2)^{-\frac{2}{p-1}} & \mbox{if} \  a+r_1=0,
\end{cases}
\end{align} where $\beta\in \left(0,\frac{p-1}{2}\right)$ is an arbitrarily fixed parameter.

\noindent If the multiplicative constant in \eqref{Kato lemma lower bound G} satisfies $K\geqslant K_0$, than the lifespan of $G$ is finite and fulfills $T\leqslant 2 T_1$.
\end{lemma}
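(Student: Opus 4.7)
The plan is to implement a Kato-type comparison argument adapted to the Euler-type operator $L \doteq \partial_t^2 + \mu t^{-1} \partial_t + \nu^2 t^{-2}$. The first step is to exploit the factorization
\[
L = \bigl(\partial_t + (r_2+1) t^{-1}\bigr)\bigl(\partial_t + r_1 t^{-1}\bigr),
\]
which follows from $r_1+r_2=\mu-1$ and $r_1 r_2 = \nu^2$. Setting $V(t) \doteq G'(t) + r_1 t^{-1} G(t)$, inequality \eqref{Kato lemma ODI G} rewrites as $(t^{r_2+1} V)'(t) \geqslant B t^{r_2+1-q} |G(t)|^p$. Since the right-hand side is nonnegative, $t \mapsto t^{r_2+1} V(t)$ is nondecreasing, and the assumption $V(1)>0$ in \eqref{Kato lemma initial condition G} forces $V(t) > 0$ throughout $[1,T)$. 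Consequently $(t^{r_1}G)'(t) = t^{r_1} V(t) > 0$, so $t^{r_1} G$ is strictly increasing and $G$ strictly positive on $(1,T)$. The quantity $\widetilde{T}_0$ defined in \eqref{def T0 tilde} is precisely the threshold beyond which, in the integrated identity $t^{r_1}G(t) = G(1) + \int_1^t s^{r_1} V(s)\,\mathrm{d}s$, the integral contribution dominates the initial value $G(1)$, so that on $[T_1,T)$ boundary terms can be safely discarded without altering the sign of the estimates.

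Once positivity of $G$ and the strict lower bound \eqref{Kato lemma lower bound G} are available on $[T_1,T)$, the argument enters its iterative phase. Substituting $G(s) \geqslant K s^a$ into the right-hand side of \eqref{Kato lemma ODI G}, the critical relation \eqref{critical condition a,p,q} produces the exponent cancellation $ap-q = a-2$; two successive integrations of the factored ODI (equivalently, reinverting $L$ via the variation-of-parameters kernel $s^{r_1-r_2-1}\tau^{r_2+1}$ already exploited in \eqref{Representation for U}) then return a lower bound of the same polynomial order $t^a$ but with an improved multiplicative constant. The auxiliary parameter $\beta \in \bigl(0,\tfrac{p-1}{2}\bigr)$ is introduced in order to transfer a fraction of the polynomial growth of $t^a$ onto a test weight of the form $t^{-\beta(a+r_1)}$; this slackening is what allows the evaluation of the integrals on $[T_1,2T_1]$ to yield the geometric factor $\tfrac{a+r_1}{1-2^{-\beta(a+r_1)}}$ (or its logarithmic analogue $(\beta \log 2)^{-1}$ in the balance case $a+r_1=0$) that appears in the definition \eqref{def K0} of $K_0$.

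Iterating the procedure $j$ times produces a sequence of lower bounds $G(t) \geqslant K^{(j)} t^a$ on $[2T_1,T)$ with $K^{(j+1)} \geqslant c\,(K^{(j)})^p$, where the constant $c$ depends on $B,p,a,r_1$ and $\beta$ through the factors singled out above. The threshold $K_0$ is tuned exactly so that $c K_0^{p-1} \geqslant 1$, which is the sharp condition under which $K \geqslant K_0$ forces $K^{(j)} \to \infty$ as $j \to \infty$. Since $G$ must be bounded on every compact subinterval of $[1,T)$ by continuity, the unbounded growth of $K^{(j)}$ is compatible with finiteness of $G(2T_1)$ only if $T \leqslant 2T_1$, which is the desired conclusion. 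The exponent $2/(p-1)$ in the definition of $K_0$ reflects the two stages of integration performed at each step of the iteration, while the factor $\tfrac{p+1}{B}$ absorbs the contribution of the coefficient of the nonlinearity.

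The principal obstacle is the precise bookkeeping of the multiplicative constants through the iteration, so as to recover exactly the form of $K_0$ given in \eqref{def K0}. A secondary, more technical issue is the uniform handling of the degenerate sub-cases: when $\delta=0$ the roots $r_1$ and $r_2$ coincide and the pair of homogeneous solutions $\{t^{-r_1},t^{-r_2}\}$ of $Lu=0$ must be replaced by $\{t^{-r_1}, t^{-r_1}\log t\}$, with corresponding logarithmic corrections in $\widetilde{T}_0$; likewise, when $a+r_1=0$ the power-type estimate $\int_{T_1}^{2T_1} s^{a+r_1-1}\,\mathrm{d}s = \tfrac{(2T_1)^{a+r_1}-T_1^{a+r_1}}{a+r_1}$ must be replaced by its logarithmic limit $\log 2$, which is precisely what produces the separate formula for $K_0$ in that case.
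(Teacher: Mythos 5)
Your preliminary steps match the paper: the factorization $L = (\partial_t + (r_2+1)t^{-1})(\partial_t + r_1 t^{-1})$, the substitution $F(t) = t^{r_1}G(t)$ with $F'(t) = t^{r_1}V(t) > 0$, and the identification of $\widetilde{T}_0$ as the time after which $F(t) \geqslant 2F(1)$ are all exactly what the paper does. The observation that the degenerate sub-cases ($\delta = 0$ and $a+r_1 = 0$) require logarithmic replacements is also correct.

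However, your ``iterative phase'' is a genuinely different mechanism from the paper's, and as sketched it has real gaps. The paper does \emph{not} iterate a lower bound of the form $G(t) \geqslant K^{(j)}t^a$. Instead it multiplies the ODI for $F$ by $2F'(t)$ and by $t^{2(r_2-r_1+1)}$, integrates once (using the critical relation to make the exponent in the $t$-weight nonpositive), applies $F(t) \geqslant 2F(1)$ to absorb the boundary term, and arrives at the \emph{pointwise first-order} ODI
\[
F'(t) \geqslant \sqrt{\tfrac{B}{p+1}}\, t^{-\frac{q}{2}-\frac{r_1}{2}(p-1)} \big(F(t)\big)^{\frac{p+1}{2}}, \qquad t \geqslant \widetilde{T}_0.
\]
It is at this stage that $\beta$ enters: multiplying by $(F(t))^{-(1+\beta)}$ and integrating over $[T_1,t]$ yields a left-hand side that is bounded (by $\tfrac{1}{\beta}F(T_1)^{-\beta}$) while the right-hand side, after inserting $F(s) \geqslant K s^{a+r_1}$ \emph{twice}, grows like $K^{\frac{p-1}{2}}$; evaluating at $t=2T_1$ produces the contradiction $K^{\frac{p-1}{2}} < K_0^{\frac{p-1}{2}}$. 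This is a one-shot comparison, not an induction on $j$; the exponent $\tfrac{2}{p-1}$ in $K_0$ comes from the square root of the $(F')^2$ estimate and then taking the inverse $\tfrac{p-1}{2}$ power, not from ``two stages of integration.''

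Your proposed constant-improvement iteration $K^{(j+1)} \geqslant c(K^{(j)})^p$ has two concrete problems. First, in the critical case the exponent of $t$ in the lower bound does not improve, so each re-integration of the variation-of-parameters formula starting from a finite time $T_1$ necessarily shrinks the interval of validity (or degrades $c$ by a ratio depending on $t/T_1$); you assert the bound holds on $[2T_1,T)$ uniformly in $j$ but give no mechanism that prevents this shrinkage. Second, even if the iteration were set up consistently, divergence of $K^{(j)}$ at best shows $T < \infty$; it does not by itself yield the quantitative estimate $T \leqslant 2T_1$, which in the paper falls out immediately from evaluating a single integral inequality at $t = 2T_1$. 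Finally, the auxiliary parameter $\beta$ plays no operational role in your scheme — you invoke it as a ``test weight'' without specifying how it enters the iteration — whereas in the paper it is the device that renders the final integration finite while still retaining a growing power of $F(s)$ on the right. You would need to replace the iterative phase with the $2F'$-multiplication / $F^{-(1+\beta)}$-integration argument to recover the stated constants and lifespan bound.
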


\begin{proof}
By contradiction we suppose that $G(t)$ is defined for any $t\in [1,2T_1]$, i.e. we are assuming that $T>2T_1$.
The first step of the proof consists in deriving a suitable factorization for the ordinary differential operator $\frac{\mathrm{d}^2}{\mathrm{d}^2 t}+\mu t^{-1}\frac{\mathrm{d}}{\mathrm{d} t}+\nu^2 t^{-2} $.
In particular, we look for two real constants $r_1,r_2$ such that
\begin{align*}
G''(t)+\mu t^{-1} G'(t)+\nu^2 t^{-2} G(t) 
&= t^{-(r_2+1)} \frac{\mathrm{d}}{\mathrm{d}t} \left(t^{r_2+1-r_1} \frac{\mathrm{d}}{\mathrm{d}t}\left(t^{r_1} G(t)\right)\right) \\
&= G''(t)+(r_1+r_2+1) t^{-1} G'(t)+r_1 r_2 t^{-2} G(t). 
\end{align*} Therefore, $r_1,r_2$ have to solve the system $$\begin{cases} r_1+r_2+1=\mu, \\ r_1 r_2=\nu^2,\end{cases}$$ that is, $r_1,r_2$ are the roots of the quadratic equation $r^2-(\mu-1)r+\nu^2=0$. Clearly, the role of $r_1,r_2$ is completely interchangeable, but for the sake of this proof it is convenient to fix $r_1,r_2$ as in \eqref{def r1,r2}. Let us introduce the auxiliary function $F(t)\doteq t^{r_1} G(t)$. From the previous considerations and \eqref{Kato lemma ODI G}, we see immediately that
\begin{align}\label{ODI F}
\frac{\mathrm{d}}{\mathrm{d}t} \left(t^{r_2+1-r_1} F'(t)\right)\geqslant B t^{r_2+1-q-r_1p} |F(t)|^p \quad \mbox{for} \ t\in[1,T).
\end{align} Integrating  \eqref{ODI F} over $[1,t]$ and using the nonnegativity of the nonlinear term, thanks to \eqref{Kato lemma initial condition G}, we obtain
\begin{align}\label{F' is positive}
F'(t) \geqslant t^{r_1-(r_2+1)} F'(1) = t^{r_1-(r_2+1)} (G'(1)+r_1G(1))>0
\end{align} for $t\in [1,T)$. 

Our next step in the proof will be to derive a lower bound estimate for $G$ that will provide in turns a lower bound estimate for $F$ as well. We underline that this lower bound estimate for $G$, differently from \eqref{Kato lemma lower bound G}, is valid for any $t\in[1,T)$. We begin by rewriting \eqref{Kato lemma ODI G} as 
\begin{align*}
t^{-(r_2+1)} \frac{\mathrm{d}}{\mathrm{d}t} \left(t^{r_2+1-r_1} \frac{\mathrm{d}}{\mathrm{d}t}\left(t^{r_1} G(t)\right)\right) \geqslant B t^{-q}|G(t)|^p
\end{align*} for $t\in[1,T)$. By straightforward intermediate computations we find
\begin{align*}
& t^{-(r_2+1)}  \frac{\mathrm{d}}{\mathrm{d}t} \left(t^{r_2+1-r_1} \frac{\mathrm{d}}{\mathrm{d}t}\left(t^{r_1} G(t)\right)\right) \geqslant 0 \qquad \mbox{for} \ \ t\in[1,T)\\
& \quad \Longrightarrow\qquad   t^{r_2+1-r_1} \frac{\mathrm{d}}{\mathrm{d}t}\left(t^{r_1} G(t)\right) -(G'(1)+r_1G(1))\geqslant 0 && \mbox{for} \ \ t\in[1,T)\\
& \quad \Longrightarrow\qquad    \frac{\mathrm{d}}{\mathrm{d}t}\left(t^{r_1} G(t)\right) \geqslant (G'(1)+r_1G(1)) \, t^{-(r_2+1)+r_1} && \mbox{for} \ \ t\in[1,T) \\
& \quad \Longrightarrow\qquad   t^{r_1} G(t) -G(1) \geqslant (G'(1)+r_1G(1)) \, \int_1^t s^{-(r_2+1)+r_1} \mathrm{d}s && \mbox{for} \ \ t\in[1,T).
\end{align*}
Let us continue the computations for $r_1\neq r_2 \Leftrightarrow \delta >0$:
\begin{align*}
G(t) &\geqslant t^{-r_1} G(1)+\frac{G'(1)+r_1G(1)}{r_1-r_2}\left(t^{-r_2}-t^{-r_1}\right)  \\
&= \frac{r_1 t^{-r_2}-r_2 t^{-r_1}}{r_1-r_2}G(1)+\frac{t^{-r_2}-t^{-r_1}}{r_1-r_2}G'(1)
\end{align*} for $t\in[1,T)$. Otherwise if $r_1= r_2 \Leftrightarrow \delta =0$, we have
\begin{align*}
G(t) &\geqslant t^{-r_1}(1+r_1\ln t)\, G(1)+t^{-r_1}\ln t \, G'(1)
\end{align*} for $t\in[1,T)$. Summarizing, we just showed that $G(t)\geqslant G_{\mathrm{lin}}(t)$ for any $t\in[1,T)$, where 
\begin{align*}
G_{\mathrm{lin}}(t)\doteq \begin{cases}
\displaystyle{\frac{r_1 t^{-r_2}-r_2 t^{-r_1}}{r_1-r_2}G(1)+\frac{t^{-r_2}-t^{-r_1}}{r_1-r_2}G'(1)} & \mbox{if} \ \ \delta>0, \\
t^{-r_1}(1+r_1\ln t)\, G(1)+t^{-r_1}\ln t \, G'(1) & \mbox{if} \ \ \delta=0.
\end{cases}
\end{align*}
Therefore, from this lower bound for $G$ and since $G(1),G'(1)\geqslant 0$ according to \eqref{Kato lemma initial condition G}, we get immediately that 
\begin{align*}
F(t)=t^{r_1}G(t)\geqslant t^{r_1} G_{\mathrm{lin}}(t) \geqslant 0
\end{align*}  for $t\in[1,T)$.
As a further consequence of \eqref{ODI F}, we have that
\begin{align*}
F''(t)+(r_2-r_1+1)\, t^{-1} F'(t)\geqslant B t^{-q-r_1(p-1)}(F(t))^p
\end{align*} for $t\in [1,T)$, where we used the fact that $F$ is nonnegative. In \eqref{F' is positive} we proved that $F'$ is always positive. Hence, by multiplying the previous inequality by $2F'(t)$, we have
\begin{align*}
\frac{\mathrm{d}}{\mathrm{d} t} (F'(t))^2+2(r_2-r_1+1)t^{-1} (F'(t))^2\geqslant \frac{2B}{p+1} t^{-q-r_1(p-1)}\frac{\mathrm{d}}{\mathrm{d}t}(F(t))^{p+1}
\end{align*} for $t\in [1,T)$. Next, we multiply the previous estimate by $t^{2(r_2-r_1+1)}$, arriving at
\begin{align}\label{estimate for F' ^2 before integration}
\frac{\mathrm{d}}{\mathrm{d} t}  \left(t^{2(r_2-r_1+1)} (F'(t))^2\right)\geqslant \frac{2B}{p+1} t^{-q-r_1(p-1)+2(r_2-r_1+1)}\frac{\mathrm{d}}{\mathrm{d}t}(F(t))^{p+1}
\end{align} for $t\in [1,T)$. We remark that the critical condition \eqref{critical condition a,p,q} implies 
\begin{align*}
-q-r_1(p-1)+2(r_2-r_1+1) 
= -(a+r_1)(p-1)+2(r_2-r_1)   =
 -(a+r_1)(p-1) -\sqrt{\delta}\leqslant 0.
\end{align*} In the previous inequality we used implicitly the fact that from the estimate $G(t)\geqslant G_{\mathrm{lin}}(t)$ we may assume without loss of generality that $a\geqslant \max\{-r_1,-r_2\}$. Consequently, integrating \eqref{estimate for F' ^2 before integration} over $[1,t]$ we get
\begin{align}
t^{2(r_2-r_1+1)} (F'(t))^2 -(F'(1))^2 &\geqslant  \frac{2B}{p+1} \int_1^t s^{-q-r_1(p-1)+2(r_2-r_1+1)}\frac{\mathrm{d}}{\mathrm{d}s}(F(s))^{p+1} \mathrm{d}s \notag\\
&\geqslant  \frac{2B}{p+1} t^{-q-r_1(p-1)+2(r_2-r_1+1)} \int_1^t \frac{\mathrm{d}}{\mathrm{d}s}(F(s))^{p+1} \mathrm{d}s \notag \\ 
&=\frac{2B}{p+1} t^{-q-r_1(p-1)+2(r_2-r_1+1)} \left((F(t))^{p+1} -F(1))^{p+1} \right) \label{estimate for F' ^2 after integration}
\end{align} for $t\in [1,T)$.
By elementary computations we have
\begin{align*}
F(t)-F(1)& =t^{r_1}G(t)-G(1)\\ 
&\geqslant t^{r_1}G_{\mathrm{lin}}(t)-G(1)  =\begin{cases}   
(G'(1)+r_1G(1)) \frac{t^{r_1-r_2}-1}{r_1-r_2} & \mbox{for} \ r_1\neq r_2, \\
(G'(1)+r_1G(1)) \ln t & \mbox{for} \ r_1 =r_2.
\end{cases}
\end{align*} Thus, $F(t)\geqslant F(1)$ for $t\in [1,T)$, which implies 
\begin{align}\label{estimate F^p+1(t)- F^p+1(0)}
(F(t))^{p+1} -(F(1))^{p+1} \geqslant (F(t))^{p}\left(F(t)-F(1)\right)
\end{align}  for $t\in [1,T)$. Our next goal is to determine for which $t$ the inequality $F(t)\geqslant 2 F(1)$ holds. Since 
\begin{align*}
F(t)-2 F(1) \geqslant t^{r_1}G_{\mathrm{lin}}(t) -2G(1)  =\begin{cases}   
(G'(1)+r_1G(1)) \frac{t^{r_1-r_2}-1}{r_1-r_2} -G(1) & \mbox{for} \ r_1\neq r_2, \\
(G'(1)+r_1G(1)) \ln t  -G(1) & \mbox{for} \ r_1 =r_2,
\end{cases}
\end{align*} the right-hand side of the previous inequality is nonnegative if and only if
\begin{align*}
  \frac{t^{r_1-r_2}-1}{r_1-r_2}\geqslant \frac{G(1)}{G'(1)+r_1G(1)} & \quad \Longleftrightarrow \quad t\geqslant \left(1+\frac{(r_1-r_2)G(1)}{G'(1)+r_1 G(1)}\right)^{\frac{1}{r_1-r_2}} && \mbox{for} \ \  r_1\neq r_2, \\
  \ln t\geqslant \frac{G(1)}{G'(1)+r_1G(1)} &\quad \Longleftrightarrow \quad t\geqslant \exp\left(\frac{(r_1-r_2)G(1)}{G'(1)+r_1 G(1)}\right) && \mbox{for} \ \  r_1= r_2. 
\end{align*} Summarizing, for $t\in  [\widetilde{T}_0,T)$, where $\widetilde{T}_0$ is defined in \eqref{def T0 tilde}, it results in $F(t)-F(1)\geqslant \frac{1}{2} F(t)$ and then, by \eqref{estimate F^p+1(t)- F^p+1(0)}, $(F(t))^{p+1} -(F(1))^{p+1} \geqslant \frac{1}{2} (F(t))^{p+1}$. Combining this last inequality with \eqref{estimate for F' ^2 after integration}, we get
\begin{align}\label{estimate intermediate Kato lemma}
(F'(t))^2 & \geqslant t^{2(r_1-(r_2+1))}(F'(1))^2+\frac{B}{p+1}t^{-q-r_1(p-1)}(F(t))^{p+1} \\
& \geqslant \frac{B}{p+1}t^{-q-r_1(p-1)}(F(t))^{p+1} \notag
\end{align} for $t\in [\widetilde{T}_0,T)$. Hence,
\begin{align}\label{estimate lb F'}
F'(t) &  \geqslant \sqrt{\frac{B}{p+1}} \, t^{-\frac{q}{2}-\frac{r_1}{2}(p-1)}(F(t))^{\frac{p+1}{2}},
\end{align} for $t\in [\widetilde{T}_0,T)$. Finally, we fix $\beta\in(0,\frac{p-1}{2})$. Multiplying \eqref{estimate lb F'} by $(F(t))^{-(2+\beta)}$, we find
\begin{align*}
\frac{F'(t)}{(F(t))^{1+\beta}}=-\frac{1}{\beta}\frac{\mathrm{d}}{\mathrm{d}t}\frac{1}{(F(t))^{\beta}}\geqslant \sqrt{\frac{B}{p+1}} \, t^{-\frac{q}{2}-\frac{r_1}{2}(p-1)}(F(t))^{\frac{p-1}{2}-\beta},
\end{align*} $t\in [\widetilde{T}_0,T)$. As in the statement, we set $T_1\doteq \max\{T_0, \widetilde{T}_0\}$. Integrating the last inequality over $[T_1,t]$, it follows that
\begin{align}\label{estimate final lemma}
\frac{1}{\beta}\left(\frac{1}{(F(T_1))^{\beta}}-\frac{1}{(F(t))^{\beta}}\right) \geqslant \sqrt{\frac{B}{p+1}} \int_{T_1}^t s^{-\frac{q}{2}-\frac{r_1}{2}(p-1)}(F(s))^{\frac{p-1}{2}-\beta} \mathrm{d}s
\end{align} for $t\in [T_1,T)$. Next, we are going to use the lower bound estimate \eqref{Kato lemma lower bound G}. Indeed, \eqref{Kato lemma lower bound G} implies 
\begin{align}\label{estimate lb F}
F(t)\geqslant K t^{a+r_1}
\end{align} for any $t\in [T_0,T)$. As we previously pointed out, we can assume that $a+r_1\geqslant 0$, due to $G(t)\geqslant G_{\mathrm{lin}}(t)$. In order to complete the proof, we plug \eqref{estimate lb F} into \eqref{estimate final lemma}. We consider separately the case $a+r_1>0$ from the case $a+r_1=0$. For $a+r_1>0$, using \eqref{critical condition a,p,q} and the lower bound estimate \eqref{estimate lb F} twice in \eqref{estimate final lemma}, we get
\begin{align*}
\frac{K^{-\beta}}{\beta}T_1^{-\beta(a+r_1)} & \geqslant \frac{1}{\beta (F(T_1))^\beta} > \frac{1}{\beta}\left(\frac{1}{(F(T_1))^{\beta}}-\frac{1}{(F(t))^{\beta}}\right) \\ & \geqslant \sqrt{\frac{B}{p+1}} \int_{T_1}^t s^{-\frac{q}{2}-\frac{r_1}{2}(p-1)}(F(s))^{\frac{p-1}{2}-\beta} \mathrm{d}s \\
 & \geqslant \sqrt{\frac{B}{p+1}} K^{\frac{p-1}{2}-\beta} \int_{T_1}^t s^{-\frac{q}{2}+\frac{a}{2}(p-1)-\beta(a+r_1)} \mathrm{d}s \\ & = \sqrt{\frac{B}{p+1}} K^{\frac{p-1}{2}-\beta} \int_{T_1}^t s^{-1-\beta(a+r_1)} \mathrm{d}s \\ & = \sqrt{\frac{B}{p+1}} \frac{K^{\frac{p-1}{2}-\beta}}{\beta(a+r_1)}\left(T_1^{-\beta(a+r_1)}-t^{-\beta(a+r_1)}\right)
\end{align*} for $t\in [T_1,T)$. In particular, picking $t=2T_1$ in the previous estimate, it results in
\begin{align*}
\frac{a+r_1}{1-2^{-\beta(a+r_1)}}\sqrt{\frac{p+1}{B}}> K^{\frac{p-1}{2}}
\end{align*} which contradicts the assumption $K\geqslant K_0$, where the definition of $K_0$ is given in \eqref{def K0}. Similarly, for $a+r_1=0$ we have
\begin{align*}
\frac{K^{-\beta}}{\beta} 
 & > \sqrt{\frac{B}{p+1}} K^{\frac{p-1}{2}-\beta} \int_{T_1}^t s^{-\frac{q}{2}+\frac{a}{2}(p-1)} \mathrm{d}s \\ & = \sqrt{\frac{B}{p+1}} K^{\frac{p-1}{2}-\beta} \int_{T_1}^t s^{-1} \mathrm{d}s \\ & = \sqrt{\frac{B}{p+1}} K^{\frac{p-1}{2}-\beta}\ln \frac{t}{T_1}
\end{align*} for $t\in [T_1,T)$. Choosing $t=2T_1$ also in this case, we derive 
\begin{align*}
\frac{1}{\beta \ln 2}\sqrt{\frac{p+1}{B}}>K^{\frac{p-1}{2}},
\end{align*} which contradicts the assumption $K\geqslant K_0$ even in this case. The proof is completed.
\end{proof}

\begin{remark} The comparison result in Lemma \ref{Lemma Kato-type} generalizes the classical Kato's lemma (see \cite[Lemma 2.1]{Tak15}) in the critical case (cf. \cite[Lemma 2.1]{YZ06} and \cite[Lemma 2.1]{TakWak11}) which is recovered for $\mu=\nu^2=0$ (up to a translation in the initial time). We emphasize that the critical condition for $a,p,q$ in \eqref{critical condition a,p,q} is exactly the same one as for classical Kato's lemma. 
\end{remark}

\begin{remark}
In the intermediate estimate \eqref{estimate intermediate Kato lemma} above we neglected the term $t^{2(r_1-(r_2+1))}(F'(1))^2$ in the lower bound for $(F'(t))^2$. We underline explicitly that this term would allow to obtain the same kind of lower bound estimate for $F(t)$ as in the estimate $F(t)\geqslant t^{r_1}G_{\mathrm{lin}}(t)$ that we actually used in the proof.
\end{remark}

\section{Integral representation formula for the 1D linear Cauchy problem} \label{Section Yagdjian integral representation}

We recall now an integral representation formula for the solution of the following linear Cauchy problem in one space dimension
\begin{align}\label{linear EPDT 1d}
\begin{cases} \partial_t^2u-t^{2\ell}\partial_x^2 u + \mu t^{-1} \partial_t u+\nu^2 t^{-2} u=g(t,x), &  x\in \mathbb{R}, \ t>1,\\
u(1,x)=u_0(x), & x\in \mathbb{R}, \\  u_t(1,x)= u_1(x), & x\in \mathbb{R}.
\end{cases}
\end{align} 
This representation has been derived by using \emph{Yagdjian's integral transform approach} (cf.  \cite{Yag04,Yag06,Yag07,Yag10,Yag15}) in \cite{Pal19RF} for the scale invariant wave model (that is, for $\ell=0$) and in \cite[Theorem 2.1]{HHP20} by means of a suitable change of variables for $\ell\in (-1,0)$. The same proof used in \cite{HHP20} can be actually repeated verbatim to derive the representation in the more general case $\ell>-1$, obtaining the result in the next lemma.

\begin{lemma}\label{Lemma RF} Let $n=1$, $\ell>-1$ and $\mu,\nu^2\geqslant 0$ such that $\delta\geqslant 0$. Let us assume that $u_0\in \mathcal{C}^2(\mathbb{R})$, $u_1\in\mathcal{C}^1(\mathbb{R})$ and $g\in\mathcal{C}\big([0,\infty),\mathcal{C}^1(\mathbb{R})\big)$. Then, the classical solution to \eqref{linear EPDT 1d} admits the following representation
\begin{align}
u(t,x) & =\frac{1}{2}\,  t^{-\frac{\mu+\ell}{2}}\left[u_0(x+A_\ell(t))+u_0(x-A_\ell(t))\right] 
+  \int_{x-A_\ell(t)}^{x+A_\ell(t)} u_0(y) K_0(t,x;y;\mu,\nu^2,\ell) \mathrm{d}y \notag \\
& \quad +  \int_{x-A_\ell(t)}^{x+A_\ell(t)} u_1(y) K_1(t,x;y;\mu,\nu^2,\ell) \mathrm{d}y  + \int_{1}^t\int_{x-A_\ell(t)+A_\ell(b)}^{x+A_\ell(t)-A_\ell(b)} g(b,y) E(t,x;b,y;\mu,\nu^2,\ell) \mathrm{d}y \, \mathrm{d}b .
\label{integral repr lin 1D}
\end{align} Here the kernel function $E$ is defined by
\begin{align}
E=E(t,x;b,y;\mu,\nu^2,\ell) & \doteq c \, t^{-\frac{\mu}{2}+\frac{1-\sqrt{\delta}}{2}} b^{\frac{\mu}{2}+\frac{1-\sqrt{\delta}}{2}} \left((\phi_\ell(t)+\phi_\ell(b))^2-(y-x)^2\right)^{-\gamma} \notag  \\ & \qquad \times F\left(\gamma,\gamma;1;\frac{(\phi_\ell(t)-\phi_\ell(b))^2-(y-x)^2}{(\phi_\ell(t)+\phi_\ell(b))^2-(y-x)^2}\right), \label{def kernel E}
\end{align} where 
\begin{align*}
c=c(\mu,\nu^2,\ell) &\doteq 2^{-\frac{\sqrt{\delta}}{1+\ell}} (1+\ell)^{-1+\frac{\sqrt{\delta}}{1+\ell}}, \\
\gamma=\gamma(\mu,\nu^2,\ell)  &\doteq \frac{1}{2}-\frac{\sqrt{\delta}}{2(1+\ell)},
\end{align*} and $F(\alpha_1,\alpha_2,\beta_1,z)$ denotes the Gauss hypergeometric function, while the kernel functions $K_0,K_1$ appearing in the integral terms involving the Cauchy data are given by
\begin{align}
K_0=K_0(t,x;y;\mu,\nu^2,\ell) & \doteq \mu E(t,x;1,y;\mu,\nu^2,\ell)-\frac{\partial E}{\partial b} (t,x;b,y;\mu,\nu^2,\ell) \Big|_{b=1}, \label{def kernel K0} \\
K_1=K_1(t,x;y;\mu,\nu^2,\ell) & \doteq E(t,x;1,y;\mu,\nu^2,\ell).\label{def kernel K1}
\end{align}
\end{lemma}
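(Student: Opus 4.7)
The plan is to reproduce the Yagdjian integral transform approach of \cite{Pal19RF, HHP20} and to extend it from $\ell \in (-1, 0)$ to every $\ell > -1$: I first reduce \eqref{linear EPDT 1d} to a one-dimensional scale-invariant damped-wave-with-mass equation via a change of time variable, then apply the representation formula already available in that setting, and finally transform back. The key observation is that the whole argument uses only $\ell + 1 > 0$ and therefore carries over unchanged.

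Concretely, I set $\tau \doteq \phi_\ell(t)$ and $v(\tau, x) \doteq u(t(\tau), x)$. Using $\partial_t = t^\ell \partial_\tau$ and $\partial_t^2 = t^{2\ell}\partial_\tau^2 + \ell t^{\ell - 1}\partial_\tau$, a direct computation turns the equation in \eqref{linear EPDT 1d} into
\begin{equation*}
v_{\tau\tau} - v_{xx} + \tilde\mu\, \tau^{-1} v_\tau + \tilde\nu^2\, \tau^{-2} v = t(\tau)^{-2\ell} g(t(\tau), x),
\end{equation*}
with $\tilde\mu \doteq (\ell + \mu)/(\ell + 1)$ and $\tilde\nu^2 \doteq \nu^2/(\ell + 1)^2$; a quick check gives $\tilde\delta \doteq (\tilde\mu - 1)^2 - 4\tilde\nu^2 = \delta/(\ell + 1)^2 \geqslant 0$, while the initial time becomes $\tau_0 = 1/(\ell + 1)$, with $v(\tau_0, \cdot) = u_0$ and $v_\tau(\tau_0, \cdot) = u_1$ (using $t^\ell\lvert_{t = 1} = 1$).

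The next step is to apply the 1D representation formula from \cite{Pal19RF} to $v$. This formula decomposes $v(\tau, x)$ as the sum of a free d'Alembert-type term proportional to $[u_0(x + (\tau - \tau_0)) + u_0(x - (\tau - \tau_0))]$, two integrals of $u_0$ and $u_1$ over the d'Alembert cone $\lvert x - y\rvert \leqslant \tau - \tau_0$ against kernels built from the Riemann function of the reduced equation at $\sigma = \tau_0$, and a Duhamel integral of the source against the Riemann function
\begin{equation*}
\tilde E(\tau, x; \sigma, y) = \tilde c\, \tau^{\alpha_1} \sigma^{\alpha_2}\bigl[(\tau + \sigma)^2 - (y - x)^2\bigr]^{-\tilde\gamma} F(\tilde\gamma, \tilde\gamma; 1; \tilde z),
\end{equation*}
with $\tilde z$ the standard cross-ratio and $\tilde\gamma = (1 - \sqrt{\tilde\delta})/2 = (1 - \sqrt\delta/(\ell + 1))/2 = \gamma$, matching the exponent appearing in the statement.

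Finally, I change variables back by $\sigma = \phi_\ell(b)$, so that $d\sigma = b^\ell\, db$ and the cones $\lvert x - y\rvert \leqslant \tau - \tau_0$ and $\lvert x - y\rvert \leqslant \tau - \sigma$ become $\lvert x - y\rvert \leqslant A_\ell(t)$ and $\lvert x - y\rvert \leqslant A_\ell(t) - A_\ell(b)$ as in \eqref{integral repr lin 1D}. The prefactor $\tau^{\alpha_1}\sigma^{\alpha_2}$ of the Riemann function, the Jacobian $b^\ell$, and the factor $b^{-2\ell}$ relating $\tilde g$ to $g$ combine to produce the exponents $-\mu/2 + (1 - \sqrt\delta)/2$ of $t$ and $\mu/2 + (1 - \sqrt\delta)/2$ of $b$ in \eqref{def kernel E}, while the constant $c$ collects the powers of $\ell + 1$ coming from $\phi_\ell(t)^{\alpha_1}$ and $\phi_\ell(b)^{\alpha_2}$. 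The kernels $K_0$ and $K_1$ then arise from evaluating $E$ and $\partial_b E$ at $b = 1$, after the integration by parts performed in \cite{HHP20} to re-express the contributions of $u_0$ and $u_1$ in the stated form \eqref{def kernel K0}--\eqref{def kernel K1}. The main obstacle is the purely algebraic verification that all these prefactors combine into the precise form \eqref{def kernel E}; this bookkeeping is exactly the one executed in \cite{HHP20} for $\ell \in (-1, 0)$ and, since it uses only $\ell + 1 > 0$, transfers without change to any $\ell > -1$.
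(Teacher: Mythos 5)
Your proposal is correct and takes essentially the same route as the paper: the paper proves this lemma by citing \cite{HHP20}, where precisely the change of time variable $\tau=\phi_\ell(t)$ reduces the EPDT operator to a scale-invariant damped wave operator with shifted parameters, the representation formula of \cite{Pal19RF} is applied, and one transforms back; the paper merely observes (as you do) that the argument uses only $\ell+1>0$ and hence extends verbatim from $\ell\in(-1,0)$ to all $\ell>-1$.
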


\begin{remark}\label{remark K0} Clearly, the kernels $E$ and $K_1$ are nonnegative on the corresponding domains of integration in \eqref{integral repr lin 1D}. Let us determine a more explicit representation for the kernel $K_0$.
Thanks to the property of the hypergeometric function
\begin{align*}
F(\alpha_1,\alpha_2;\beta;\zeta)=(1-\zeta)^{\beta-(\alpha_1+\alpha_2)} F(\beta-\alpha_1,\beta-\alpha_2;\beta;\zeta), 
\end{align*} see cf. \cite[Eq.(15.8.1)]{OLBC10}), and the relation $1-2\gamma=\frac{\sqrt{\delta}}{\ell+1}$ we can rewrite the kernel $E$ as follows
\begin{align*}
& \hphantom{=} E (t,x;b,y;\mu,\nu^2,\ell) \\
 &= c \, t^{-\frac{\mu}{2}+\frac{1-\sqrt{\delta}}{2}} b^{\frac{\mu}{2}+\frac{1-\sqrt{\delta}}{2}} \left((\phi_\ell(t)+\phi_\ell(b))^2-(y-x)^2\right)^{-1+\gamma}  \left(4 \phi_\ell(b)\phi_\ell(b)\right)^{1-2\gamma}  F\big(1-\gamma,1-\gamma;1;z(t,x;b,y;\ell)\big) \\
 &= \left(\tfrac{2}{\ell+1}\right)^{2(1-2\gamma)} c \, t^{-\frac{\mu}{2}+\frac{1+\sqrt{\delta}}{2}} b^{\frac{\mu}{2}+\frac{1+\sqrt{\delta}}{2}} \left((\phi_\ell(t)+\phi_\ell(b))^2-(y-x)^2\right)^{\gamma-1}  F\big(1-\gamma,1-\gamma;1;z(t,x;b,y;\ell)\big), 
\end{align*} where $$z(t,x;b,y;\ell)\doteq \frac{(\phi_\ell(t)-\phi_\ell(b))^2-(y-x)^2}{(\phi_\ell(t)+\phi_\ell(b))^2-(y-x)^2}.$$
For the ease of notation, we introduce the function 
\begin{align*}
\mathcal{E}(t,x;b,y;\mu,\nu^2,\ell)\doteq b^{\frac{\mu}{2}+\frac{1+\sqrt{\delta}}{2}} \left((\phi_\ell(t)+\phi_\ell(b))^2-(y-x)^2\right)^{-1+\gamma}  F\big(1-\gamma,1-\gamma;1;z(t,x;b,y;\ell)\big). 
\end{align*} By using the recursive identity $\frac{\mathcal{d}F}{\mathcal{d}\zeta}(\alpha_1,\alpha_2;\beta;\zeta)=\frac{\alpha_1 \alpha_2}{\beta}F(\alpha_1+1,\alpha_2+1;\beta+1;\zeta)$, cf. \cite[Eq.(15.5.1)]{OLBC10}, we obtain
\begin{align*}
& \frac{\partial \mathcal{E}}{\partial b}(t,x;b,y;\mu,\nu^2,\ell) \\ & = b^{\frac{\mu}{2}+\frac{1+\sqrt{\delta}}{2}} \left((\phi_\ell(t)+\phi_\ell(b))^2-(y-x)^2\right)^{\gamma-1} \\ 
&  \quad\times \bigg\{  \left[\left(\tfrac{\mu+1+\sqrt{\delta}}{2}\right)\, b^{-1}+\frac{2(\gamma-1) b^{\ell}\, (\phi_\ell(t)+\phi_\ell(b)) }{(\phi_\ell(t)+\phi_\ell(b)^2-(y-x)^2}\right] F\big(1-\gamma,1-\gamma;1; z(t,x;b,y;\ell)\big) \\ & \qquad \quad +(1-\gamma)^2 \partial_b z (t,x;b,y;\ell) \, F\big(2-\gamma,2-\gamma;2; z(t,x;b,y;\ell)\big)\bigg\}.
\end{align*} Therefore,
\begin{align}
K_0(t,x;y;\mu,\nu^2,\ell) & =  \left(\tfrac{2}{\ell+1}\right)^{2(1-2\gamma)} c\, t^{-\frac{\mu}{2}+\frac{1+\sqrt{\delta}}{2}} \left(\mu-\frac{\partial}{\partial b}\Big|_{b=1}\right) \mathcal{E}(t,x;b,y;\mu,\nu^2,\ell)  \notag \\ &= \left(\tfrac{2}{\ell+1}\right)^{2(1-2\gamma)} c\, t^{-\frac{\mu}{2}+\frac{1+\sqrt{\delta}}{2}} \left((\phi_\ell(t)+\phi_\ell(1))^2-(y-x)^2\right)^{\gamma-1} \notag \\ & \quad \times \Big\{  \Big[\tfrac{\mu-1-\sqrt{\delta}}{2} +\frac{2(1-\gamma) (\phi_\ell(t)+\phi_\ell(1))}{(\phi_\ell(t)+\phi_\ell(1)^2-(y-x)^2}\Big] F\big(1-\gamma,1-\gamma;1; z(t,x;1,y;\ell)\big) \notag \\ & \qquad \quad -(1-\gamma)^2 \partial_b z(t,x;1,y;\ell) \, F\big(2-\gamma,2-\gamma;2; z(t,x;1,y;\ell)\big)\Big\}.\label{explicit K0}
\end{align} By elementary computations, for $b\in [1,t]$ and $y\in[x-\phi_\ell(t)+\phi_\ell(b),x+\phi_\ell(t)-\phi_\ell(b)]$ we find that 
\begin{align*}
\frac{\partial z}{\partial b}(t,x;1,y;\ell)
&= -\frac{2b^{\ell} \phi_\ell(t) [\phi^2_\ell(t)-\phi^2_\ell(b)-(y-x)^2]}{[(\phi_\ell(t)+\phi_\ell(b))^2-(y-x)^2]^2}\leqslant 0,
\end{align*} since  $\phi^2_\ell(t)-\phi^2_\ell(b)-(y-x)^2\geqslant 2\phi_\ell(b)(\phi_\ell(t)-\phi_\ell(b))\geqslant 0$. Furthermore, the constant $1-\gamma =\frac{1}{2}(1+\frac{\sqrt{\delta}}{\ell+1})$ is always positive. Recalling \eqref{def delta}, we see that for $\mu\geqslant 1$ the constant $\frac{\mu-1-\sqrt{\delta}}{2}$ is nonnegative, while for $\mu\in[0,1)$ is strictly negative. Summarizing, for $\mu\geqslant 1$ the kernel function $K_0(t,x;y;\mu,\nu^2,\ell)$ is nonnegative for $y\in[x-A_\ell(t),x+A_\ell(t)]$, while for $\mu\in [0,1)$ the first of the three terms in the curl brackets is negative and the other two are nonnegative.

In the next section, in order to have a nonnegative solution to the 1-dimensional homogeneous problem when $\mu\in[0,1)$, we shall require that the Cauchy data are not only nonnegative, but they also satisfy the condition $u_1+\frac{\mu-1-\sqrt{\delta}}{2}u_0\geqslant 0$ (of course, when $\mu\geqslant 1$ the nonnegativity of the data is sufficient for this purpose, being the kernel functions $K_0,K_1$ nonnegative).
\end{remark}

\section{Iteration frame for $\|u(t,\cdot)\|^p_{L^p(\mathbb{R}^n)}$ via the Radon transform}
\label{Section Radon transform}

In this section we are going to derive an iteration frame for $\|u(t,\cdot)\|^p_{L^p(\mathbb{R}^n)}$. Following the idea from \cite{YZ06} we first reduce the problem to one space dimension by means of the Radon transform. 

We begin by remarking that we may assume without loss of generality that our solution $u$ is radially symmetric (otherwise, since the kernel $E$ is nonnegative, one can work with its spherical means instead, cf. \cite[Step 3 of the proof of Lemma 2.2]{YZ06}).

When $n\geqslant 2$ we consider the Radon transform $\mathcal{R}[u]$, with respect to the space variable, of the local classical solution $u=u(t,x)$ to \eqref{semilinear EPDT}. For $n=1$ everything that we are going to prove for $\mathcal{R}[u]$ can be simply proved by working directly with $u$.

Given $\varrho\in\mathbb{R}$ and $\xi\in\mathbb{R}^n$ such that $|\xi|=1$, we recall that the \emph{Radon transform} of $u(t,\cdot)$ is defined as
\begin{align}
\mathcal{R}[u](t,\varrho,\xi) &\doteq \int_{\{x\in\mathbb{R}^n| x\cdot \xi=\varrho\}} u(t,x) \mathrm{d}\sigma_x \notag \\
&= \int_{\{x\in\mathbb{R}^n| x\cdot \xi=0\}} u(t,x+\varrho \xi) \mathrm{d}\sigma_x, \label{def Radon transf}
\end{align} where $\mathrm{d}\sigma_x$ denote the Lebesgue measure on the hyperplanes. Since $u(t,\cdot)$ is radially symmetric, it turns out that $\mathcal{R}[u]$ is independent of $\xi$ and admits the representation
\begin{align*}
\mathcal{R}[u](t,\varrho)=\omega_{n-1}\int_{|\varrho|}^{\infty}u(t,r)(r^2-\varrho^2)^\frac{n-3}{2} r\mathrm{d}r,
\end{align*} where $\omega_{n-1}$ is the measure of the unit ball in $\mathbb{R}^{n-1}$. 

Thanks to the identity $\mathcal{R}[\Delta u](t,\varrho)=\partial_\varrho^2 \mathcal{R}[u](t,\varrho)$ (cf. \cite[Lemma 2.1, Chap. 1]{Hel11}), we see that $\mathcal{R}[u](t,\varrho)$ is a solution to the following Cauchy problem
\begin{align*} 
\begin{cases} \partial_t^2\mathcal{R}[u]-t^{2\ell}\partial_\varrho^2 \mathcal{R}[u] + \mu t^{-1} \partial_t \mathcal{R}[u]+\nu^2 t^{-2} \mathcal{R}[u]=\mathcal{R}[|u|^p], &  \varrho\in \mathbb{R}, \ t>1,\\
\mathcal{R}[u](1,\varrho)=\varepsilon\mathcal{R}[u_0](\varrho), & \varrho\in \mathbb{R}, \\  \partial_t \mathcal{R}[u](1,\varrho)=\varepsilon \mathcal{R}[u_1](\varrho), & \varrho\in \mathbb{R}.
\end{cases}
\end{align*} 
From Lemma \ref{Lemma RF}, we have the
representation
\begin{align*}
\mathcal{R}[u](t,\varrho) = \varepsilon (\mathcal{R}[u])_{\mathrm{lin}}(t,\varrho)+ (\mathcal{R}[u])_{\mathrm{nlin}}(t,\varrho),
\end{align*} where
\begin{align*}
(\mathcal{R}[u])_{\mathrm{lin}}(t,\varrho)&\doteq  \tfrac{1}{2}\,  t^{-\frac{\mu+\ell}{2}}\left(\mathcal{R}[u_0](\varrho+A_\ell(t))+\mathcal{R}[u_0](\varrho-A_\ell(t))\right) \\ 
& \quad +\int_{\varrho-A_\ell(t)}^{\varrho+A_\ell(t)} \mathcal{R}[u_0](\eta) K_0(t,\varrho;\eta;\mu,\nu^2,\ell) \mathrm{d}\eta \notag \\
& \quad +  \int_{\varrho-A_\ell(t)}^{\varrho+A_\ell(t)} \mathcal{R}[u_1](\eta) K_1(t,\varrho;\eta;\mu,\nu^2,\ell) \mathrm{d}\eta , \\ 
(\mathcal{R}[u])_{\mathrm{nlin}}(t,\varrho)&\doteq  \int_{1}^t\int_{\varrho-A_\ell(t)+A_\ell(b)}^{\varrho+A_\ell(t)-A_\ell(b)} \mathcal{R}[|u|^p](b,\eta) E(t,\varrho;b,\eta;\mu,\nu^2,\ell) \mathrm{d}\eta \, \mathrm{d}b.
\end{align*}

Thanks to the assumptions on the Cauchy data in Theorem \ref{Thm main}, we have that $(\mathcal{R}[u])_{\mathrm{lin}}$ is a nonnegative function. In particular, according to Remark \ref{remark K0}, when $\mu\geqslant 1$ this follows from the fact that $u_0,u_1$ are nonnegative, while for $\mu\in[0,1)$ we employ the condition $u_1+\frac{\mu-1-\sqrt{\delta}}{2}u_0\geqslant 0$ as well. Indeed, from \eqref{explicit K0} we find that in this latter case the function $\mathcal{R}[u_0](\eta) K_0(t,\varrho;\eta;\mu,\nu^2,\ell)+\mathcal{R}[u_1](\eta) K_1(t,\varrho;\eta;\mu,\nu^2,\ell)$ is nonnegative over the interval $[\varrho-A_\ell(t),\varrho+A_\ell(t)]$. 

 Consequently,
\begin{align}\label{Radon transf ineq 1}
\mathcal{R}[u](t,\varrho) \geqslant  (\mathcal{R}[u])_{\mathrm{nlin}}(t,\varrho).
\end{align}
From the support condition $\mathrm{supp}\, u_0,\mathrm{supp}\, u_1\subset B_R$ it follows that $\mathrm{supp} \, u(t,\cdot)\subset B_{R+A_\ell(t)}$ for any $t\in[1,T)$. Therefore, $$\mathcal{R}[u](t,\cdot)\subset [-(R+A_\ell(t)),R+A_\ell(t)]$$ for any $t\in[1,T)$. Indeed, for $|\varrho|>R+A_\ell(t)$ from \eqref{def Radon transf} on the hyperplane $\{x\in\mathbb{R}^n| x\cdot \omega=0\}$ we have
\begin{align*}
|\varrho \xi+x|^2=\varrho^2+|x|^2\geqslant \varrho^2 \quad \Longrightarrow \quad |\varrho \xi+x| > R+A_\ell(t) \quad \Longrightarrow \quad u(t,\varrho \xi+x)=0
\end{align*} and then $\mathcal{R}[u](t,\varrho)=0$ (the integrand function on the above hyperplane is identically 0). For the same reasons, it holds that $\mathrm{supp }\, \mathcal{R}[|u|^p](t,\cdot)\subset [-(R+A_\ell(t)),R+A_\ell(t)]$ for any $t\in [1,T)$. We point out that 
\begin{align*}
[-(R+A_\ell(b)),R+A_\ell(b)] \subset [\varrho-A_\ell(t)+A_\ell(b), \varrho+A_\ell(t)-A_\ell(b)] \ \ \Longleftrightarrow \ \ b\leqslant A_\ell^{-1}\left(\tfrac{1}{2}(A_\ell(t)-|\varrho|-R)\right).
\end{align*} Therefore, we introduce the quantity $b_0\doteq A_\ell^{-1}\left(\tfrac{1}{2}(A_\ell(t)-|\varrho|-R)\right)$. We stress that $b_0\geqslant 1$ if and only if $|\varrho|\leqslant A_\ell(t)-R$. From \eqref{Radon transf ineq 1} we have 
\begin{align}
\mathcal{R}[u](t,\varrho) & \geqslant  \int_{1}^{b_0}\int_{\varrho-A_\ell(t)+A_\ell(b)}^{\varrho+A_\ell(t)-A_\ell(b)} \mathcal{R}[|u|^p](b,\eta) E(t,\varrho;b,\eta;\mu,\nu^2,\ell) \mathrm{d}\eta \, \mathrm{d}b \notag \\
& \geqslant  \int_{1}^{b_0}\int_{-(R+A_\ell(b))}^{R+A_\ell(b)} \mathcal{R}[|u|^p](b,\eta) E(t,\varrho;b,\eta;\mu,\nu^2,\ell) \mathrm{d}\eta \, \mathrm{d}b \label{Radon transf ineq 2}
\end{align}
for $|\varrho|\leqslant A_\ell(t)-R$. Now we want to derive a lower bound estimate for the kernel function $E$ when $b\in [1,b_0]$ and $\eta\in [-(R+A_\ell(b)),R+A_\ell(b)].$ Let us recall that
\begin{align}\label{recall def E}
E(t,\varrho;b,\eta;\mu,\nu^2,\ell)= c t^{-\frac{\mu}{2}+\frac{1-\sqrt{\delta}}{2}}b^{\frac{\mu}{2}+\frac{1-\sqrt{\delta}}{2}}\left((\phi_\ell(t)+\phi_\ell(b))^2-(\varrho-\eta)^2\right)^{-\gamma} F(\gamma,\gamma;1;\zeta),
\end{align} where 
\begin{align*}
\gamma=\tfrac{1}{2}\left(1-\tfrac{\sqrt{\delta}}{\ell+1}\right) \quad \mbox{and} \quad \zeta=\zeta(t,\varrho;b,\eta;\ell)\doteq \frac{(\phi_\ell(t)-\phi_\ell(b))^2-(\varrho-\eta)^2}{(\phi_\ell(t)+\phi_\ell(b))^2-(\varrho-\eta)^2}.
\end{align*} Thanks to the Taylor expansion for the hypergeometric function
\begin{align*}
F(\gamma,\gamma;1;z) =\sum_{k=0}^{\infty} \frac{(\gamma)^2_k}{(1)_k \, k!} z^k,
\end{align*} where $(\gamma)_k$ is the so-called Pochhammer symbol for the rising factorial, we immediately see that $F(\gamma,\gamma;1;\zeta)\geqslant 1$ for any $\zeta\in [0,1)$.

Hence, it remains to estimate from below the factor $\left((\phi_\ell(t)+\phi_\ell(b))^2-(\varrho-\eta)^2\right)^{-\gamma}$ in \eqref{recall def E}. Since $\gamma$ changes sign depending on the rage for the parameters $\mu,\nu^2,\ell$, we need to consider two subcases (that is, the cases $\gamma\geqslant 0$ and  $\gamma\leqslant 0$).

Let us begin by showing that $b\in [1,b_0]$ and $\eta\in [-(R+A_\ell(b)),R+A_\ell(b)]$ the following inequalities hold
\begin{align}
\phi_\ell(t)+\phi_\ell(b)-\varrho +\eta\leqslant 2(\phi_\ell(t)-\varrho),\label{kernel estimate 1} \\
\phi_\ell(t)+\phi_\ell(b)+\varrho -\eta\leqslant 2(\phi_\ell(t)+\varrho)\label{kernel estimate 2}.
\end{align} We remark that \eqref{kernel estimate 1} is equivalent to require that $\phi_\ell(b)\leqslant \phi_\ell(t)-\varrho-\eta$. In order to check \eqref{kernel estimate 1}, we prove  the previous inequality is satisfied. Therefore, for $b\in [1,b_0]$ and $\eta\in [-(R+A_\ell(b)),R+A_\ell(b)]$ 
\begin{align*}
\phi_\ell(t)-\varrho-\eta & \geqslant \phi_\ell(t)-\varrho-A_\ell(b) -R \geqslant \phi_\ell(t)-\varrho-A_\ell(b_0) -R \\ &= A_\ell(t)+\phi_\ell(1)-\varrho-\tfrac 12 (A_\ell(t)-|\varrho|-R) -R \geqslant  \phi_\ell(1)+\tfrac 12 (A_\ell(t)-|\varrho|-R) \\
&= \phi_\ell(1)+ A_\ell(b_0)\geqslant \phi_\ell(1)+ A_\ell(b)=\phi_\ell(b).
\end{align*} In a similar way, one may prove \eqref{kernel estimate 2}. Hence, for $\gamma\geqslant 0$ (i.e. for $\delta\geqslant (\ell+1)^2$) by \eqref{kernel estimate 1} and \eqref{kernel estimate 2} we obtain
\begin{align}
\left((\phi_\ell(t)+\phi_\ell(b))^2-(\varrho-\eta)^2\right)^{-\gamma} \geqslant 2^{-2\gamma} \left(\phi_\ell^2(t)-\varrho^2\right)^{-\gamma}  \label{kernel estimate 3}
\end{align}  for $b\in [1,b_0]$ and $\eta\in [-(R+A_\ell(b)),R+A_\ell(b)]$. On the other hand, since for $b\in [1,b_0]$ and $\eta\in [-(R+A_\ell(b)),R+A_\ell(b)]$
\begin{align*}
\phi_\ell(t)+\phi_\ell(b)-\varrho +\eta\geqslant  \phi_\ell(t)+\phi_\ell(b)-\varrho -R-A_\ell(b)= \phi_\ell(t)-\varrho+\phi_\ell(1)-R, \\
\phi_\ell(t)+\phi_\ell(b)+\varrho -\eta \geqslant  \phi_\ell(t)+\phi_\ell(b)+\varrho -R-A_\ell(b)= \phi_\ell(t)+\varrho+\phi_\ell(1)-R,
\end{align*}  when $\gamma\leqslant 0$ (i.e. for $\delta\in [0,(\ell+1)^2$) we get
\begin{align}
\left((\phi_\ell(t)+\phi_\ell(b))^2-(\varrho-\eta)^2\right)^{-\gamma} \geqslant \left((\phi_\ell(t)+\phi_\ell(1)-R)^2-\varrho^2\right)^{-\gamma}.  \label{kernel estimate 4}
\end{align} Setting 
\begin{align*}
R_1\doteq \begin{cases} 0 & \mbox{if}\ \ \gamma\geqslant 0, \\ \phi_\ell(1)-R & \mbox{if}  \ \  \gamma< 0, \end{cases}
\end{align*} we can summarize \eqref{kernel estimate 3} and \eqref{kernel estimate 4} through the following estimate
\begin{align}
\left((\phi_\ell(t)+\phi_\ell(b))^2-(\varrho-\eta)^2\right)^{-\gamma}  \gtrsim  \left((\phi_\ell(t)+R_1)^2-\varrho^2\right)^{-\gamma} \label{kernel estimate 5}
\end{align} for $b\in [1,b_0]$ and $\eta\in [-(R+A_\ell(b)),R+A_\ell(b)]$. Combining \eqref{Radon transf ineq 2} and \eqref{kernel estimate 5}, we arrive at
\begin{align}
\mathcal{R}[u](t,\varrho) & \gtrsim t^{-\frac{\mu}{2}+\frac{1-\sqrt{\delta}}{2}} \left((\phi_\ell(t)+R_1)^2-\varrho^2\right)^{-\gamma}  \int_{1}^{b_0}  b^{\frac{\mu}{2}+\frac{1-\sqrt{\delta}}{2}} \int_{-(R+A_\ell(b))}^{R+A_\ell(b)} \mathcal{R}[|u|^p](b,\eta)  \mathrm{d}\eta \, \mathrm{d}b \notag  \\
& \gtrsim t^{-\frac{\mu}{2}+\frac{1-\sqrt{\delta}}{2}} \left((\phi_\ell(t)+R_1)^2-\varrho^2\right)^{-\gamma}  \int_{1}^{b_0}  b^{\frac{\mu}{2}+\frac{1-\sqrt{\delta}}{2}} \int_{\mathbb{R}} \mathcal{R}[|u|^p](b,\eta)  \mathrm{d}\eta \, \mathrm{d}b \notag \\
& = t^{-\frac{\mu}{2}+\frac{1-\sqrt{\delta}}{2}} \left((\phi_\ell(t)+R_1)^2-\varrho^2\right)^{-\gamma}  \int_{1}^{b_0}  b^{\frac{\mu}{2}+\frac{1-\sqrt{\delta}}{2}} \int_{\mathbb{R}^n} |u(b,x)|^p  \mathrm{d}x \, \mathrm{d}b \label{Radon transf fund ineq 1}
\end{align}
for $|\varrho|\leqslant A_\ell(t)-R$, where in the second step we used the support condition for $\mathcal{R}[|u|^p](s,\cdot)$.

Next, we follow the approach from \cite[Section 5]{PalRei18} and in \cite[Section C]{PalTak22} to obtain a lower bound estimate for $\| u(t,\cdot)\|^p_{L^p(\mathbb{R}^n)}$ in which the Radon transform of $u(t,\cdot)$ appears on the right-hand side. We introduce the operator 
\begin{align*}
\mathcal{T}_t(h)(\tau) \doteq |A_\ell(t)+R-\tau|^{-\frac{n-1}{2}}\int_\tau^ {A_\ell(t)+R}h(r)|r-\tau|^{\frac{n-3}{2}}\mathrm{d}r
\end{align*}  for any $t\geqslant 1$, $\tau\in\mathbb{R}$ and any $h\in L^p(\mathbb{R})$. The operator $\mathcal{T}_t$ is  a generalization of the operator introduced in \cite[Eq.(2.16)]{YZ06} in the critical case for the classical semilinear wave equation.  In \cite[Section 5]{PalRei18} it is proved that $\{\mathcal{T}_t\}_{t\geqslant 1}$ is a uniformly bounded  family of operators in $\mathcal{L}(L^p(\mathbb{R})\to L^p(\mathbb{R}))$ for any $n\geqslant 2$ and $p>1$. In particular, for $n\geqslant 3$ one can use the $L^p$-boundedness of the Hardy-Littlewood maximal function, while for $n=2$ a more careful analysis with weak Lebesgue spaces is necessary (cf. \cite[pages 1209-1210]{PalRei18}).
Let us introduce the  function
\begin{align*}
h(t,r)\doteq \begin{cases}
|u(t,r)|r^{\frac{n-1}{p}} & \mbox{if} \ r\geqslant 0, \\
0  & \mbox{if} \ r< 0. \end{cases}
\end{align*} Thanks to the uniform boundedness in $L^p(\mathbb{R})$ of $\{\mathcal{T}_t\}_{t\geqslant 1}$, it holds $\|\mathcal{T}_t(h)(t,\cdot)\|_{L^p(\mathbb{R})}\lesssim \|h(t,\cdot)\|_{L^p(\mathbb{R})}$ for any $t\in [1,T)$. Then,
\begin{align}
\int_{\mathbb{R}^n}|u(t,x)|^p \mathrm{d}x &= \omega_n\int^{\infty}_0|u(t,r)|^p r^{n-1}\mathrm{d}r=  \omega_n \|h(t,\cdot)\|_{L^p(\mathbb{R})}^p  \gtrsim  \|\mathcal{T}_t(h)(t,\cdot)\|_{L^p(\mathbb{R})} = \int_{\mathbb{R}}|\mathcal{T}_t(h)(t,\varrho)|^p \mathrm{d}\varrho \notag \\
&=\int_{\mathbb{R}} |A_\ell(t)+R-\varrho|^{-\frac{n-1}{2}p}\left|\int_\varrho^ {A_\ell(t)+R}|u(t,r)| r^{\frac{n-1}{p}}|r-\varrho|^{\frac{n-3}{2}}\mathrm{d}r \right|^p\mathrm{d}\varrho \notag \\
& \geqslant \int_{0}^{A_\ell(t)+R} (A_\ell(t)+R-\varrho)^{-\frac{n-1}{2}p}\left(\int_\varrho^ {A_\ell(t)+R}|u(t,r)| r^{\frac{n-1}{p}}(r-\varrho)^{\frac{n-3}{2}}\mathrm{d}r \right)^p\mathrm{d}\varrho \label{Rad inter 1}
\end{align}
Being the Radon transform a monotone operator, we have
\begin{align}
0\leqslant \mathcal{R}[u](t,\varrho)\leqslant \mathcal{R}[|u|](t,\varrho) &= \omega_{n-1}\int_{|\varrho|}^{A_\ell(t)+R}|u(t,r)|(r^2-\varrho^2)^{\frac{n-3}{2}} r \mathrm{d}r \notag 
\\ & \lesssim \int_{|\varrho|}^{A_\ell(t)+R}|u(t,r)|(r-|\varrho|)^{\frac{n-3}{2}} r^{\frac{n-1}{2}} \mathrm{d}r. \label{Rad inter 2}
\end{align} Furthermore, for any $\varrho\in [0,A_\ell(t)+R]$ and $r\in [\varrho,A_\ell(t)+R]$ 
\begin{align*}
r^{\frac{n-1}{p}}\geqslant (A_\ell(t)+R)^{-(n-1)[1-\frac{p}{2}]_{-}}r^{\frac{n-1}{2}} \varrho^{(n-1)[1-\frac{p}{2}]_{+}},
\end{align*} where $[1-\frac{p}{2}]_{\pm}$ denote the positive and the negative part of $1-\frac{p}{2}$, respectively. Thanks to the last estimate, from \eqref{Rad inter 1} and \eqref{Rad inter 2} we obtain
\begin{align}
\| u(t,\cdot)\|^p_{L^p(\mathbb{R}^n)} & \gtrsim  (A_\ell(t)+R)^{-(n-1)[1-\frac{p}{2}]_{-}} \int_0^{A_\ell(t)+R} \frac{\varrho^{(n-1)[1-\frac{p}{2}]_{+}}}{(A_\ell(t)+R-\varrho)^{\frac{n-1}{2}p}}\left(\int_{\varrho}^{A_\ell(t)+R}\!|u(t,r)|(r-\varrho)^{\frac{n-3}{2}} r^{\frac{n-1}{2}} \mathrm{d}r\! \right)^p \!\mathrm{d}\varrho \notag \\
& \gtrsim (A_\ell(t)+R)^{-(n-1)[1-\frac{p}{2}]_{-}}\int_0^{A_\ell(t)+R}\frac{\varrho^{(n-1)[1-\frac{p}{2}]_{+}}}{(A_\ell(t)+R-\varrho)^{\frac{n-1}{2}p}}\left(\mathcal{R}[u](t,\varrho)\right)^p \mathrm{d}\varrho. \label{Radon transf fund ineq 2}
\end{align} 
Combining \eqref{Radon transf fund ineq 1} and \eqref{Radon transf fund ineq 2}, we derive the following iteration frame for $\|u(t,\cdot)\|_{L^p(\mathbb{R}^n)}^p$
\begin{align}
\| u(t,\cdot)\|^p_{L^p(\mathbb{R}^n)}  \geqslant  \frac{K \, t^{-\frac{\mu p}{2}+\frac{1-\sqrt{\delta}}{2}p}}{(A_\ell(t)+R)^{(n-1)[1-\frac{p}{2}]_{-}}} & \int_0^{A_\ell(t)-R} \varrho^{(n-1)[1-\frac{p}{2}]_{+}} \frac{\left((\phi_\ell(t)+R_1)^2-\varrho^2\right)^{-\gamma p}}{(A_\ell(t)+R-\varrho)^{\frac{n-1}{2}p}} \notag \\
& \quad \times  \left( \int_{1}^{A_\ell^{-1}\left(\frac{1}{2}(A_\ell(t)-\varrho-R)\right)}  b^{\frac{\mu}{2}+\frac{1-\sqrt{\delta}}{2}}  \|u(b,\cdot)\|^p_{L^p(\mathbb{R}^n)}\mathrm{d}b\right)^p \mathrm{d}\varrho,
\label{iteration frame ||u(t)||^p Lp }
\end{align} for $t\geqslant A_\ell^{-1}(R)$ and for a suitable positive constant $K=K(\mu,\nu^2,\ell,n,p)$.

\section{Iteration argument for $\|u(t,\cdot)\|^p_{L^p(\mathbb{R}^n)}$} \label{Section iteration  argument}

In this section we are going to derive a sequence of lower bound estimates for $\|u(t,\cdot)\|_{L^p(\mathbb{R}^n)}^p$ that improve \eqref{1st low bound ||u||^p} by including a factor with logarithmic growth. In order to achieve this goal, we employ the iteration frame \eqref{iteration frame ||u(t)||^p Lp } and an inductive argument. For the sake of readability, we split the proof of the base case, the inductive step and the improvement of the lower bound estimates for $U(t)$ into three different subsections.

\subsection{Base case}\label{Subsection base case}

We begin our inductive argument, by plugging \eqref{1st low bound ||u||^p} into \eqref{iteration frame ||u(t)||^p Lp }. Thus, for $t\geqslant A_\ell^{-1}(R+2A_\ell(T_2))$ we have
\begin{align}
\| u(t,\cdot)\|^p_{L^p(\mathbb{R}^n)} & \geqslant   \frac{K \widetilde{C}^p \varepsilon^{p^2} \, t^{-\frac{\mu p}{2}+\frac{1-\sqrt{\delta}}{2}p}}{(A_\ell(t)+R)^{(n-1)[1-\frac{p}{2}]_{-}}} \int_0^{A_\ell(t)-R-2A_\ell(T_2)} \varrho^{(n-1)[1-\frac{p}{2}]_{+}} \frac{\left((\phi_\ell(t)+R_1)^2-\varrho^2\right)^{-\gamma p}}{(A_\ell(t)+R-\varrho)^{\frac{n-1}{2}p}}   \left( I_0(t,\varrho)\right)^p \mathrm{d}\varrho, \label{low bound ||u||^p base case 1}
\end{align} where
\begin{align*}
I_0(t,\varrho)\doteq \int_{T_2}^{A_\ell^{-1}\left(\frac{1}{2}(A_\ell(t)-\varrho-R)\right)}  b^{\frac{\mu}{2}+\frac{1-\sqrt{\delta}}{2}-\big(\frac{n-1}{2}(\ell+1)+\frac{\ell+\mu}{2}\big)p+(n-1)(\ell+1)} \mathrm{d}b.
\end{align*} Let us estimate $I_0(t,\varrho)$ from below, by shrinking the domain of integration with respect to $\varrho$ in \eqref{low bound ||u||^p base case 1}. Since the inequality $2(r_2+1)=\mu+1-\sqrt{\delta}\geqslant 0$ is true for any $\mu,\nu^2$ satisfying $\delta\geqslant 0$, we find
\begin{align*}
I_0(t,\varrho) \geqslant & \left(A_\ell^{-1}\left(\tfrac{1}{2}(A_\ell(t)-\varrho-R)\right)\right)^{-\big(\frac{n-1}{2}(\ell+1)+\frac{\ell+\mu}{2}\big)p} \int_{T_2}^{A_\ell^{-1}\left(\frac{1}{2}(A_\ell(t)-\varrho-R)\right)}  b^{r_2+1+(n-1)(\ell+1)} \mathrm{d}b \\
 =& \left(A_\ell^{-1}\!\left(\tfrac{1}{2}(A_\ell(t)-\varrho-R)\right)\right)^
 {-\big(\frac{n-1}{2}(\ell+1)+\frac{\ell+\mu}{2}\big)p}\frac{ \left(A_\ell^{-1}\!\left(\tfrac{1}{2}(A_\ell(t)-\varrho-R)\right)\right)^{r_2+2+(n-1)(\ell+1)}-T_2^{r_2+2+(n-1)(\ell+1)}}{r_2+2+(n-1)(\ell+1)}.
\end{align*} Since $A_\ell^{-1}(\sigma)=((\ell+1)\sigma+1)^{\frac{1}{\ell+1}}$, we have
\begin{align*}
A_\ell^{-1}\left(\tfrac{1}{2}(A_\ell(t)-\varrho-R)\right) &=\left(\tfrac{\ell+1}{2}\right)^{\frac{1}{\ell+1}} \left(A_\ell(t)-\varrho-R+2\phi_\ell(1)\right)^{\frac{1}{\ell+1}}.
\end{align*} Let us introduce a parameter $\alpha_0>2(T_2^{\ell+1}-1)$.  Then, $A_\ell(t)-R-\alpha_0 \phi_\ell(1)\leqslant A_\ell(t)-R-2A_\ell(T_2)$, so for $\varrho \in [0,A_\ell(t)-R-\alpha_0 \phi_\ell(1)]$ we have 
\begin{align*}
I_0(t,\varrho) &\geqslant \widehat{B} \left(A_\ell(t)-\varrho-R+2\phi_\ell(1)\right)^{-\big(\frac{n-1}{2}
+\frac{\ell+\mu}{2(\ell+1)}\big)p+\frac{r_2+2}{\ell+1}+n-1},
\end{align*} where the multiplicative constant is given by
\begin{align*}
\widehat{B}\doteq \frac{\left(\tfrac{\ell+1}{2}\right)^{-\big(\frac{n-1}{2}+\frac{\ell+\mu}{2(\ell+1)}\big)p+\frac{r_2+1}{\ell+1}+n-1} }{r_2+2+(n-1)(\ell+1)}\left[1-\left(\tfrac{2T_2^{\ell+1}}{\alpha_0+2}\right)^{\frac{r_2+2}{\ell+1}+n-1}\right].
\end{align*} Hence, combining the previous estimate for $I_0(t,\varrho)$ with \eqref{low bound ||u||^p base case 1}, for $t\geqslant A^{-1}_\ell(R+\alpha_0\phi_\ell(1))$ we obtain
\begin{align}
\| u(t,\cdot)\|^p_{L^p(\mathbb{R}^n)}  \geqslant  K \widetilde{C}^p \widehat{B}^p \varepsilon^{p^2}  \frac{t^{-\frac{\mu p}{2}+\frac{1-\sqrt{\delta}}{2}p}}{(A_\ell(t)+R)^{(n-1)[1-\frac{p}{2}]_{-}}} J_0(t), \label{low bound ||u||^p base case 2}
\end{align} where
\begin{align*}
J_0(t)\doteq  \int_0^{A_\ell(t)-R-\alpha_0 \phi_\ell(1)} & \varrho^{(n-1)[1-\frac{p}{2}]_{+}} \frac{\left((\phi_\ell(t)+R_1)^2-\varrho^2\right)^{-\gamma p}}{(A_\ell(t)+R-\varrho)^{\frac{n-1}{2}p}}  \\ &\times \left(A_\ell(t)-\varrho-R+2\phi_\ell(1)\right)^{-\big(\frac{n-1}{2}+\frac{\ell+\mu}{2(\ell+1)}\big)p^2+\big(\frac{r_2+2}{\ell+1}+n-1\big)p} \mathrm{d}\varrho.
\end{align*} The next step is to estimate from below the factor $\left((\phi_\ell(t)+R_1)^2-\varrho^2\right)^{-\gamma p}$ in $J_0(t)$: we consider separately the case $\gamma\geqslant 0$ from the case $\gamma <0$. If $\gamma\geqslant 0$, then for $\varrho \in [0,A_\ell(t)-R-\alpha_0 \phi_\ell(1)]$
\begin{align*}
\phi_\ell(t)+R_1-\varrho & =A_\ell(t)-\varrho+\phi_\ell(1), \\
\phi_\ell(t)+R_1+\varrho & =\phi_\ell(t)+\varrho\leqslant 2\phi_\ell(t)-R-(\alpha_0+1)\leqslant 2\phi_\ell(t), 
\end{align*} and, consequently,
\begin{align}
\left((\phi_\ell(t)+R_1)^2-\varrho^2\right)^{-\gamma p} & \geqslant \left(\tfrac{\ell+1}{2}\right)^{\gamma p} t^{-(\ell+1)\gamma p} \left(A_\ell(t)-\varrho+\phi_\ell(1)\right)^{-\gamma p} \notag \\
 & = \left(\tfrac{\ell+1}{2}\right)^{\gamma p} t^{-\frac{\ell p}{2}-\frac{1-\sqrt{\delta}}{2}p} \left(A_\ell(t)-\varrho+\phi_\ell(1)\right)^{-\gamma p}. \label{factor with R1 gamma>=0}
\end{align} On the other hand, for $\gamma<0$, we introduce another parameter $a_0\geqslant 2$ and  for $t\geqslant A_\ell^{-1}(a_0R+\alpha_0 \phi_\ell(1))$ we consider  $\varrho \in [0,A_\ell(t)-a_0R-\alpha_0 \phi_\ell(1)]$
\begin{align*}
\phi_\ell(t)+R_1-\varrho & =A_\ell(t)-\varrho-R+2\phi_\ell(1), \\
\phi_\ell(t)+R_1+\varrho & =\phi_\ell(t)+\varrho-R+\phi_\ell(1) \geqslant \phi_\ell(t)-R+\phi_\ell(1) \\ & \geqslant \begin{cases} \phi_\ell(t) & \mbox{if} \ R\leqslant \phi_\ell(1), \\ \tfrac 12 \phi_\ell(t) & \mbox{if} \ R> \phi_\ell(1) \ \mbox{and} \ t\geqslant A_\ell^{-1}(2R-3\phi_\ell(1)). \end{cases} 
\end{align*}  Notice that, since we are assuming $a_0\geqslant 2$, the condition $t\geqslant A_\ell^{-1}(2R-3\phi_\ell(1))$ is always fulfilled for the case that we are considering. Therefore, when $\gamma<0$ for $t\geqslant A_\ell^{-1}(a_0R+\alpha_0 \phi_\ell(1))$ and  $\varrho \in [0,A_\ell(t)-a_0R-\alpha_0 \phi_\ell(1)]$, it holds
\begin{align}
\left((\phi_\ell(t)+R_1)^2-\varrho^2\right)^{-\gamma p} & \geqslant \left(2(\ell+1)\right)^{\gamma p} t^{-\frac{\ell p}{2}-\frac{1-\sqrt{\delta}}{2}p} \left(A_\ell(t)-\varrho-R+2\phi_\ell(1)\right)^{-\gamma p}. \label{factor with R1 gamma<0}
\end{align}
Combining \eqref{factor with R1 gamma>=0} and \eqref{factor with R1 gamma<0} with the upper bound estimates
\begin{align*}
 A_\ell(t)-\varrho+\phi_\ell(1) &\leqslant \tfrac{a_0}{a_0-1} \left(A_\ell(t)-\varrho-R+2\phi_\ell(1)\right), \\
 A_\ell(t)-\varrho+R &\leqslant \tfrac{a_0+1}{a_0-1} \left(A_\ell(t)-\varrho-R+2\phi_\ell(1)\right),
\end{align*}
for $\varrho\in [0,A_\ell(t)-a_0R-\alpha_0\phi_\ell(1)]$ and $t\geqslant A_\ell^{-1}(a_0R+\alpha_0 \phi_\ell(1))$ we arrive at the following lower bound for $J_0(t)$
\begin{align*}
J_0(t)\geqslant \widetilde{B} t^{-\frac{\ell p}{2}-\frac{1-\sqrt{\delta}}{2}p}  \int_0^{A_\ell(t)-a_0R-\tfrac{\alpha_0}{\ell+1}} \varrho^{(n-1)[1-\frac{p}{2}]_{+}}  \left(A_\ell(t)-\varrho-R+2\phi_\ell(1)\right)^{-\big(\frac{n-1}{2}+\frac{\ell+\mu}{2(\ell+1)}\big)p^2+\big(\frac{r_2+2}{\ell+1}+\frac{n-1}{2}-\gamma\big)p} \mathrm{d}\varrho,
\end{align*} where the multiplicative constant is defined as
\begin{align} \label{def widetildeB}
 \widetilde{B}\doteq \begin{cases}
 \left(\tfrac{\ell+1}{2}\right)^{\gamma p} \left(\tfrac{a_0}{a_0-1}\right)^{-\gamma p}  \left(\tfrac{a_0+1}{a_0-1}\right)^{-\frac{n-1}{2} p} & \mbox{if} \ \gamma\geqslant 0, \\
  \left(2(\ell+1)\right)^{\gamma p}   \left(\tfrac{a_0+1}{a_0-1}\right)^{-\frac{n-1}{2} p} & \mbox{if} \ \gamma<0.
 \end{cases}
\end{align} 
Let us remark that, since $p=p_{\mathrm{Str}}\big(n+\tfrac{\mu}{\ell+1},\ell\big)$, by using \eqref{equation for p crit}, we can rewrite the exponent in the last integral as
\begin{align*}
-\left(\tfrac{n-1}{2}+\tfrac{\ell+\mu}{2(\ell+1)}\right)p^2+\left(\tfrac{r_2+2}{\ell+1}+\tfrac{n-1}{2}-\gamma\right)p &= -\left(\tfrac{n-1}{2}+\tfrac{\ell+\mu}{2(\ell+1)}\right)p^2+\left(\tfrac{n+1}{2}+\tfrac{\mu-3\ell}{2(\ell+1)}\right)p =-1.
\end{align*} Let us fix a constant $\theta \in (0,1)$. We keep going with our estimate from below for $J_0(t)$ by shrinking further the domain of integration. For $\varrho\in [0,A_\ell(t)-a_0R-\alpha_0\phi_\ell(1)]$ and $t\geqslant A_\ell^{-1}(a_0R+\alpha_0 \phi_\ell(1))$ we have
\begin{align*}
J_0(t) & \geqslant \widetilde{B} t^{-\frac{\ell p}{2}-\frac{1-\sqrt{\delta}}{2}p}   \int_0^{A_\ell(t)-a_0R-\alpha_0\phi_\ell(1)} \varrho^{(n-1)[1-\frac{p}{2}]_{+}}  \left(A_\ell(t)-\varrho-R+2\phi_\ell(1)\right)^{-1} \mathrm{d}\varrho \\
& \geqslant \widetilde{B} t^{-\frac{\ell p}{2}-\frac{1-\sqrt{\delta}}{2}p}   \int_{\theta(A_\ell(t)-a_0R-\alpha_0\phi_\ell(1))}^{A_\ell(t)-a_0R-\alpha_0\phi_\ell(1)}  \varrho^{(n-1)[1-\frac{p}{2}]_{+}}  \left(A_\ell(t)-\varrho-R+2\phi_\ell(1)\right)^{-1} \mathrm{d}\varrho \\
& \geqslant \widetilde{B} \theta^{(n-1)[1-\frac{p}{2}]_{+}} t^{-\frac{\ell p}{2}-\frac{1-\sqrt{\delta}}{2}p}  (A_\ell(t)-a_0R-\alpha_0\phi_\ell(1))^{(n-1)[1-\frac{p}{2}]_{+}}  \ln\left(\frac{A_\ell(t)+\frac{\theta a_0-1}{1-\theta}R+\frac{\theta \alpha_0+2}{1-\theta}\phi_\ell(1)}{\frac{a_0-1}{1-\theta}R+\frac{ \alpha_0+2}{1-\theta}\phi_\ell(1)}\right).
\end{align*}
By using the previous lower bound for $J_0(t)$ in \eqref{low bound ||u||^p base case 2}, for $t\geqslant A_\ell^{-1}(a_0R+\alpha_0 \phi_\ell(1))$ we have
\begin{align}\label{1st low bound ||u||^p with log}
\|u(t,\cdot)\|_{L^p(\mathbb{R}^n)}^p   \geqslant B_0\varepsilon^{p^2} &  t^{-\frac{\ell+\mu }{2}p}\frac{(A_\ell(t)-a_0R-\alpha_0\phi_\ell(1))^{(n-1)[1-\frac{p}{2}]_{+}}}{(A_\ell(t)+R)^{(n-1)[1-\frac{p}{2}]_{-}}} \ln\left(\frac{A_\ell(t)+\frac{\theta a_0-1}{1-\theta}R+\frac{\theta \alpha_0+2}{1-\theta}\phi_\ell(1)}{\frac{a_0-1}{1-\theta}R+\frac{ \alpha_0+2}{1-\theta}\phi_\ell(1)}\right),
\end{align} where $B_0\doteq  K \widetilde{C}^p  \widehat{B}^p \widetilde{B} \theta^{(n-1)[1-\frac{p}{2}]_{+}} $. 

\subsection{Inductive step}

In Subsection \ref{Subsection base case} we determined the first lower bound estimate \eqref{1st low bound ||u||^p with log} for $\|u(t,\cdot)\|_{L^p(\mathbb{R}^n)}^p $ which improves \eqref{1st low bound ||u||^p} thanks to the presence of a logarithmic factor. The goal of the present subsection is to derive a sequence of lower bound estimates for $\|u(t,\cdot)\|_{L^p(\mathbb{R}^n)}^p$  with additional logarithmic factors.

\begin{lemma} Let us assume that $\theta\in (\frac{1}{2},1)$ and that the real parameters $a_0,\alpha_0$ satisfy 
\begin{align}\label{a0, alpha0 conditions}
a_0\geqslant\max\left\{2,\frac{1}{2\theta -1}\right\} \ \mbox{and} \ \  \alpha_0>2(T_2^{\ell+1}-1).
\end{align} We consider the sequences $\{a_j\}_{j\in\mathbb{N}}, \{\alpha_j\}_{j\in\mathbb{N}}$ whose terms for any $j\in\mathbb{N}, j\geqslant 1$ are given by
\begin{align}
a_j &\doteq (a_0-1)\left(\frac{4}{1-\theta}\right)^j+1, \label{def aj} \\
\alpha_j &\doteq (\alpha_0+2)\left(\frac{4}{1-\theta}\right)^j-2. \label{def alphaj}
\end{align}
Then, for any $j\in\mathbb{N}$ and any $t\geqslant A_\ell^{-1}(a_jR+\alpha_j \phi_\ell(1))$ we have
\begin{align} \label{(j+1)-th low bound ||u||^p with log}
\|u(t,\cdot)\|_{L^p(\mathbb{R}^n)}^p   \geqslant B_j\varepsilon^{p^{j+2}}   t^{-\frac{\ell+\mu }{2}p}\frac{(A_\ell(t)-a_jR-\alpha_j\phi_\ell(1))^{(n-1)[1-\frac{p}{2}]_{+}}}{(A_\ell(t)+R)^{(n-1)[1-\frac{p}{2}]_{-}}} \left(\!\ln\left(\frac{A_\ell(t)+\frac{\theta a_j-1}{1-\theta}R+\frac{\theta \alpha_j+2}{1-\theta}\phi_\ell(1)}{\frac{a_j-1}{1-\theta}R+\frac{ \alpha_j+2}{1-\theta}\phi_\ell(1)}\right)\!\right)^{\frac{p^{j+1}-1}{p-1}},
\end{align} 
where  $\{B_j\}_{j\in\mathbb{N}}$ is a sequence of positive reals whose first term $B_0=B_0(n,\ell,\mu,\nu^2,p,R,u_0,u_1,a_0,\alpha_0,\theta)$ is the constant on the right hand side of \eqref{1st low bound ||u||^p with log} and the other terms fulfill the recursive relation
\begin{align}\label{recursive relation Bj}
B_{j+1} \doteq D p^{-(j+1)} B_j^p
\end{align} 
for any $j\in\mathbb{N}$ and for a suitable positive constant $D$ depending on $n,\ell,\mu,\nu^2,p,R,a_0,\alpha_0,\theta$.
\end{lemma}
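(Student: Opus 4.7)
The argument is by induction on $j$. The base case $j=0$ is exactly \eqref{1st low bound ||u||^p with log} established in Subsection \ref{Subsection base case}. For the inductive step $j\mapsto j+1$, the plan is to substitute the inductive hypothesis \eqref{(j+1)-th low bound ||u||^p with log} into the iteration frame \eqref{iteration frame ||u(t)||^p Lp } and follow a three-stage procedure modeled exactly on the base case: estimate the inner $b$-integral, take the $p$-th power, and estimate the outer $\varrho$-integral.

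First, I would restrict the $\varrho$-range so that $b_0(t,\varrho)\doteq A_\ell^{-1}\!\bigl(\tfrac12(A_\ell(t)-\varrho-R)\bigr)$ exceeds the validity threshold $A_\ell^{-1}(a_jR+\alpha_j\phi_\ell(1))$ of the inductive hypothesis. After inserting \eqref{(j+1)-th low bound ||u||^p with log} into $\int_1^{b_0} b^{r_2+1}\|u(b,\cdot)\|^p_{L^p(\mathbb{R}^n)}\,\mathrm{d}b$ and bounding the polynomial and logarithmic factors from below by their values at convenient endpoints (so that the logarithm $\mathcal{L}_j(b)$ can be replaced by $\mathcal{L}_j(b_0)$ and $A_\ell(b)-a_jR-\alpha_j\phi_\ell(1)$ by a polynomial lower bound valid on the subinterval), the integration in $b$ produces a lower bound of the form $\widetilde{D}_0\,B_j\varepsilon^{p^{j+2}}\,b_0^{\kappa_j}\mathcal{L}_j(b_0)^{(p^{j+1}-1)/(p-1)}$, where the exponent $\kappa_j$ is tuned so that, after the $p$-th power is taken, the critical-exponent relation \eqref{equation for p crit} collapses the net power of $A_\ell(t)-\varrho+\mathrm{const}$ in the outer integrand to $-1$. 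No new logarithm is produced here, but the division by the exponent of $b$ in the antiderivative is exactly what will yield the factor $p^{-(j+1)}$ in \eqref{recursive relation Bj}.

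Next, raising to the $p$-th power inside the $\varrho$-integral multiplies the log exponent by $p$, giving $pN_j$ with $N_j\doteq \tfrac{p^{j+1}-1}{p-1}$. The outer integrand then takes, up to constants, the form $(A_\ell(t)-\varrho-R+2\phi_\ell(1))^{-1}\,\mathcal{L}_j(b_0(\varrho))^{pN_j}$, precisely by the same use of the critical exponent relation \eqref{equation for p crit} already exploited in the base case (and the same split $\gamma\geqslant 0$ vs $\gamma<0$ of the factor $((\phi_\ell(t)+R_1)^2-\varrho^2)^{-\gamma p}$). Restricting $\varrho$ to $[\theta(A_\ell(t)-a_{j+1}R-\alpha_{j+1}\phi_\ell(1)),A_\ell(t)-a_{j+1}R-\alpha_{j+1}\phi_\ell(1)]$ with $a_{j+1},\alpha_{j+1}$ given by \eqref{def aj}--\eqref{def alphaj} guarantees that $b_0(\varrho)$ on this subinterval is still large enough for $\mathcal{L}_j(b_0(\varrho))$ to dominate the logarithm appearing on the right-hand side of \eqref{(j+1)-th low bound ||u||^p with log} at index $j+1$; integrating the remaining $(A_\ell(t)-\varrho-R+2\phi_\ell(1))^{-1}$ then contributes one more $\ln$, raising the log exponent to $pN_j+1=N_{j+1}$.

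The geometric blow-up of $a_j,\alpha_j$ in \eqref{def aj}--\eqref{def alphaj} is calibrated precisely to absorb at each step both the new validity threshold and the $\theta$-shift of the outer integration; the conditions in \eqref{a0, alpha0 conditions}, in particular $a_0\geqslant(2\theta-1)^{-1}$, are exactly what makes these nested restrictions consistent. The main obstacle is entirely a bookkeeping one: one must verify that every exponent produced by the two integrations balances via \eqref{equation for p crit} so that, at the end, the resulting inequality has the exact polynomial and logarithmic structure claimed in \eqref{(j+1)-th low bound ||u||^p with log} at level $j+1$, with the constant $B_{j+1}=Dp^{-(j+1)}B_j^p$ isolated from the two divisions (one from the $b$-antiderivative, one from the $\varrho$-antiderivative) and from the multiplicative constants in the estimates \eqref{kernel estimate 5} and \eqref{Radon transf fund ineq 2}.
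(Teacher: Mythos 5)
Your high-level plan (induction, substitute into the iteration frame, handle the $b$-integral then the $\varrho$-integral, use the critical relation \eqref{equation for p crit} to reduce the power to $-1$, absorb thresholds into the geometric sequences $a_j,\alpha_j$) matches the paper. But there are two linked misattributions in how the logarithmic factor and the constant $p^{-(j+1)}$ are generated, and the second of them is not merely cosmetic — if carried out as you describe, the argument would not produce the required growing $\ln$-factor.

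First, the factor $p^{-(j+1)}$ in \eqref{recursive relation Bj} does \emph{not} come from integrating the power of $b$. The exponent of $b$ in the inner integral is a fixed number (independent of $j$): after inserting the inductive hypothesis and simplifying with \eqref{def r1,r2}, the $b$-integrand behaves like $b^{r_2+1+(n-1)(\ell+1)}$ times factors depending on $A_\ell(b)$, so the antiderivative divides only by $r_2+2+(n-1)(\ell+1)$, a $j$-independent constant. In the paper, the logarithmic factor is pulled out of the $b$-integral at the \emph{lower} endpoint of a shrunken subinterval (it is increasing in $b$), leaving a pure power integral and hence only a $j$-independent multiplicative constant.

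Second, and more seriously, the paper does \emph{not} pull the logarithm out of the $\varrho$-integral. It keeps it in and integrates
$(A_\ell(t)-\varrho+\mathrm{const})^{-1}\bigl(\ln(A_\ell(t)-\varrho+\mathrm{const})/\mathrm{const}\bigr)^{\frac{p^{j+2}-p}{p-1}}$
in closed form; the antiderivative is $\tfrac{p-1}{p^{j+2}-1}\bigl(\ln(\cdot)\bigr)^{\frac{p^{j+2}-1}{p-1}}$, and the constants $a_{j+1},\alpha_{j+1}$ are designed so that the contribution at the upper $\varrho$-endpoint vanishes exactly (the numerator and denominator inside the log coincide there). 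It is precisely this closed-form integration that produces both the exponent increase $pN_j\mapsto pN_j+1=N_{j+1}$ and the factor $\tfrac{p-1}{p^{j+2}-1}\geqslant(1-\tfrac1p)\,p^{-(j+1)}$ entering $D$. Your alternative — pulling $\mathcal{L}_j(b_0(\varrho))^{pN_j}$ out of the integral and observing that $\int(\cdot)^{-1}\,\mathrm{d}\varrho$ ``contributes one more $\ln$'' — does not work: the factor you would pull out must be evaluated at the point of the subinterval where it is smallest, which (since the log is decreasing in $\varrho$) is the upper endpoint $\varrho=A_\ell(t)-a_{j+1}R-\alpha_{j+1}\phi_\ell(1)$; there $A_\ell(t)-\varrho$ is a fixed constant, so the pulled-out factor is a bounded constant, not a growing power of $\ln t$. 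Moreover, a product $\mathcal{L}^{pN_j}\cdot\ln(\cdot)$ with mismatched arguments is not the same as a single $(\ln)^{N_{j+1}}$, and you would also lose the $p^{-(j+1)}$ decay that is essential later when $K_j(t,\varepsilon)$ is compared with $K_0$ in the proof of Theorem \ref{Thm main}. The repair is exactly the paper's move: do not pull out the logarithm in the $\varrho$-integral, but instead estimate both the log argument and the $(\cdot)^{-1}$ base so that the integrand becomes an exact derivative of $(\ln)^{N_{j+1}}$.
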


\begin{proof}
We have already proved \eqref{(j+1)-th low bound ||u||^p with log} for $j=0$, namely, \eqref{1st low bound ||u||^p with log}. In order to prove \eqref{(j+1)-th low bound ||u||^p with log} by using an inductive argument, we check the inductive step. Let us assume that \eqref{(j+1)-th low bound ||u||^p with log} holds true for some $j\geqslant 0$. Obviously, our target is to show \eqref{(j+1)-th low bound ||u||^p with log} for $j+1$, specifying the recursive relations among $a_{j+1},\alpha_{j+1}, B_{j+1}$ and $a_{j},\alpha_{j}, B_{j}$. We begin by plugging \eqref{(j+1)-th low bound ||u||^p with log} for $j$ in \eqref{iteration frame ||u(t)||^p Lp }.  Then, for $t\geqslant A_\ell^{-1}((2a_j+1)R+2\alpha_j \phi_\ell(1))$ we obtain
\begin{align}
\| u(t,\cdot)\|^p_{L^p(\mathbb{R}^n)}  & \geqslant  KB_j^p \varepsilon^{p^{j+3}} t^{-\frac{\mu p}{2}+\frac{1-\sqrt{\delta}}{2}p} (A_\ell(t)+R)^{-(n-1)[1-\frac{p}{2}]_{-}} J_{j+1}(t), \label{l.b.e. |u(t)|^p_L^p begin inductive step}
\end{align} where
\begin{align*}
 J_{j+1}(t) & \doteq \int_0^{A_\ell(t)-(2a_j+1)R-2\alpha_j \phi_\ell(1)} \varrho^{(n-1)[1-\frac{p}{2}]_{+}} \frac{\left((\phi_\ell(t)+R_1)^2-\varrho^2\right)^{-\gamma p}}{(A_\ell(t)+R-\varrho)^{\frac{n-1}{2}p}} \left(I_{j+1}(t,\varrho)\right)^p \mathrm{d}\varrho, 
 \\
I_{j+1}(t,\varrho) & \doteq  \int_{A_\ell^{-1}(a_jR+\alpha_j \phi_\ell(1))}^{A_\ell^{-1}\left(\frac{1}{2}(A_\ell(t)-\varrho-R)\right)}  b^{\frac{\mu}{2}+\frac{1-\sqrt{\delta}}{2}-\frac{\ell+\mu}{2}p} \frac{(A_\ell(b)-a_jR-\alpha_j\phi_\ell(1))^{(n-1)[1-\frac{p}{2}]_{+}}}{(A_\ell(b)+R)^{(n-1)[1-\frac{p}{2}]_{-}}} \notag\\
& \qquad \qquad \qquad \qquad \qquad\times \left(\ln\left(\frac{A_\ell(b)+\frac{\theta a_j-1}{1-\theta}R+\frac{\theta \alpha_j+2}{1-\theta}\phi_\ell(1)}{\frac{a_j-1}{1-\theta}R+\frac{ \alpha_j+2}{1-\theta}\phi_\ell(1)}\right)\right)^{\frac{p^{j+1}-1}{p-1}}\mathrm{d}b. 
\end{align*} 
\begin{remark} We stress that, when we use the lower bound estimate \eqref{(j+1)-th low bound ||u||^p with log} in \eqref{iteration frame ||u(t)||^p Lp }, in order to avoid empty domain of integration in the $b$-integral we must shrink the domain of integration with respect to $\varrho$ from $[0,A_\ell(t)-R]$ to $[0,A_\ell(t)-(2a_j+1)R-2\alpha_j \phi_\ell(1)]$ and require  $t\geqslant A_\ell^{-1}((2a_j+1)R+2\alpha_j \phi_\ell(1))$ so that this domain of integration in $J_{j+1}(t)$ is not empty. Hereafter, we make this kind of considerations implicitly every time that we shrink an interval of integration.
\end{remark}
Let us start with the estimate of the integral $I_{j+1}(t,\varrho)$. Clearly,  $A_\ell(b)-a_j R-\alpha_j \phi_\ell(1)\geqslant \frac{1}{2}\phi_\ell(b)$ if and only if $b\geqslant A_\ell^{-1}(2a_j R+(2\alpha_j+1)\phi_\ell(1))$ and $A_\ell(b)+R\leqslant 2\phi_\ell(b)$ if and only if $b\geqslant A_\ell^{-1}(R-2\phi_\ell(1))$. From \eqref{def aj} and \eqref{def alphaj}  it follows that $2a_j>1$ and $2\alpha_j+1>-2$. Therefore, for $t\geqslant A_\ell^{-1}((4a_j+1)R+2(2\alpha_j+1)\phi_\ell(1))$ and $\varrho\in[0,A_\ell(t)-(4a_j+1)R-2(2\alpha_j+1)\phi_\ell(1)]$ we can shrink the domain of integration in $I_{j+1}(t,\varrho)$ from  $\left[A_\ell^{-1}(a_jR+\alpha_j \phi_\ell(1)),A_\ell^{-1}\left(\frac{1}{2}(A_\ell(t)-\varrho-R)\right)\right]$ 
to  $\left[A_\ell^{-1}(2a_jR+(2\alpha_j+1) \phi_\ell(1)),A_\ell^{-1}\left(\frac{1}{2}(A_\ell(t)-\varrho-R)\right)\right]$. 

Consequently, for $t\geqslant A_\ell^{-1}((4a_j+1)R+2(2\alpha_j+1)\phi_\ell(1))$ and $\varrho\in[0,A_\ell(t)-(4a_j+1)R-2(2\alpha_j+1)\phi_\ell(1)]$ we find
\begin{align}
I_{j+1}(t,\varrho) & \geqslant  C_1 \int_{A_\ell^{-1}(2a_jR+(2\alpha_j+1) \phi_\ell(1))}^{A_\ell^{-1}\left(\frac{1}{2}(A_\ell(t)-\varrho-R)\right)}  b^{r_2+1-\frac{\ell+\mu}{2}p+(n-1)(\ell+1)(1-\frac{p}{2})} \left(\!\ln\left(\frac{A_\ell(b)+\frac{\theta a_j-1}{1-\theta}R+\frac{\theta \alpha_j+2}{1-\theta}\phi_\ell(1)}{\frac{a_j-1}{1-\theta}R+\frac{ \alpha_j+2}{1-\theta}\phi_\ell(1)}\right)\!\right)^{\frac{p^{j+1}-1}{p-1}}\!\mathrm{d}b \notag\\
 & \geqslant C_1 \left(A_\ell^{-1}\left(\tfrac{1}{2}(A_\ell(t)-\varrho-R)\right)\right)^{-(\frac{n-1}{2}(\ell+1)+\frac{\ell+\mu}{2})p} \widetilde{I}_{j+1}(t,\varrho), \label{1st l.b.e. Ij+1(t,rho)}
\end{align} where $C_1\doteq 2^{-(n-1)|1-\frac{p}{2}|} (\ell+1)^{(n-1)(\frac{p}{2}-1)}$ and 
\begin{align*}
\widetilde{I}_{j+1}(t,\varrho) \doteq  \int_{A_\ell^{-1}(2a_jR+(2\alpha_j+1) \phi_\ell(1))}^{A_\ell^{-1}\left(\frac{1}{2}(A_\ell(t)-\varrho-R)\right)}  b^{r_2+1+(n-1)(\ell+1)} \left(\ln\left(\frac{A_\ell(b)+\frac{\theta a_j-1}{1-\theta}R+\frac{\theta \alpha_j+2}{1-\theta}\phi_\ell(1)}{\frac{a_j-1}{1-\theta}R+\frac{ \alpha_j+2}{1-\theta}\phi_\ell(1)}\right)\right)^{\frac{p^{j+1}-1}{p-1}}\mathrm{d}b.
\end{align*}
For $t\geqslant A_\ell^{-1}((8a_j+1)R+4(2\alpha_j+1)\phi_\ell(1))$ and $\varrho\in[0, A_\ell(t)-(8a_j+1)R-4(2\alpha_j+1)\phi_\ell(1))]$, we shrink further the region of integration in $\widetilde{I}_{j+1}(t,\varrho)$ as follows
\begin{align}
\widetilde{I}_{j+1}(t,\varrho)  \geqslant & \int_{A_\ell^{-1}\left(\frac{1}{4}(A_\ell(t)-\varrho-R)\right)}^{A_\ell^{-1}\left(\frac{1}{2}(A_\ell(t)-\varrho-R)\right)}  b^{r_2+1+(n-1)(\ell+1)} \left(\ln\left(\frac{A_\ell(b)+\frac{\theta a_j-1}{1-\theta}R+\frac{\theta \alpha_j+2}{1-\theta}\phi_\ell(1)}{\frac{a_j-1}{1-\theta}R+\frac{ \alpha_j+2}{1-\theta}\phi_\ell(1)}\right)\right)^{\frac{p^{j+1}-1}{p-1}}\mathrm{d}b \notag \\
 \geqslant& \left(\ln\left(\frac{A_\ell(t)-\varrho+\left(\frac{4(\theta a_j-1)}{1-\theta}-1\right)R+\frac{4(\theta \alpha_j+2)}{1-\theta}\phi_\ell(1)}{\frac{4(a_j-1)}{1-\theta}R+\frac{ 4(\alpha_j+2)}{1-\theta}\phi_\ell(1)}\right)\right)^{\frac{p^{j+1}-1}{p-1}} \int_{A_\ell^{-1}\left(\frac{1}{4}(A_\ell(t)-\varrho-R)\right)}^{A_\ell^{-1}\left(\frac{1}{2}(A_\ell(t)-\varrho-R)\right)}  b^{r_2+1+(n-1)(\ell+1)} \mathrm{d}b. \label{1st l.b.e. I tilde j+1(t,rho)}
\end{align}
Let us rewrite the integral in the right-hand side of the previous inequality in a more convenient way
\begin{align*}
& \int_{A_\ell^{-1}\left(\frac{1}{4}(A_\ell(t)-\varrho-R)\right)}^{A_\ell^{-1}\left(\frac{1}{2}(A_\ell(t)-\varrho-R)\right)}  b^{r_2+1+(n-1)(\ell+1)} \mathrm{d}b \\  & =  \frac{\left(A_\ell^{-1}\left(\tfrac{1}{2}(A_\ell(t)-\varrho-R)\right)\right)^{r_2+2+(n-1)(\ell+1)}}{r_2+2+(n-1)(\ell+1)}\left(1 -\left(\frac{A_\ell^{-1}\left(\tfrac{1}{4}(A_\ell(t)-\varrho-R)\right)}{A_\ell^{-1}\left(\tfrac{1}{2}(A_\ell(t)-\varrho-R)\right)}\right)^{r_2+2+(n-1)(\ell+1)}  \right).
\end{align*} Recalling that $A_\ell^{-1}(\sigma)=\left((\ell+1)\sigma+1\right)^{\frac{1}{\ell+1}}$, we have that
\begin{align*}
A_\ell^{-1}\left(\tfrac{1}{2}(A_\ell(t)-\varrho-R)\right)& = \left(\tfrac{\ell+1}{2}\right)^\frac{1}{\ell+1} \left(A_\ell(t)-\varrho-R+2\phi_\ell(1)\right)^\frac{1}{\ell+1},\\ 
A_\ell^{-1}\left(\tfrac{1}{4}(A_\ell(t)-\varrho-R)\right) &=  \left(\tfrac{\ell+1}{4}\right)^\frac{1}{\ell+1} \left(A_\ell(t)-\varrho-R+4\phi_\ell(1)\right)^\frac{1}{\ell+1} ,
\end{align*}
and hence, for $\varrho\in[0, A_\ell(t)-(8a_j+1)R-4(2\alpha_j+1)\phi_\ell(1))]$
\begin{align*}
& \int_{A_\ell^{-1}\left(\frac{1}{4}(A_\ell(t)-\varrho-R)\right)}^{A_\ell^{-1}\left(\frac{1}{2}(A_\ell(t)-\varrho-R)\right)}  b^{r_2+1+(n-1)(\ell+1)} \mathrm{d}b \\  & \qquad =C_2\left(1 -\left(\frac{1}{2}+\frac{\phi_\ell(1)}{A_\ell(t)-\varrho-R+2\phi_\ell(1)}\right)^{\frac{r_2+2}{\ell+1}+(n-1)}  \right) \big(A_\ell(t)-\varrho-R+2\phi_\ell(1)\big)^{\frac{r_2+2}{\ell+1} +n-1} \\  & \qquad  \geqslant
C_2\left(1 -\left(\frac{1}{2}+\frac{\phi_\ell(1)}{8a_j R+4(2\alpha_j+3)\phi_\ell(1)}\right)^{\frac{r_2+2}{\ell+1}+(n-1)}  \right) \big(A_\ell(t)-\varrho-R+2\phi_\ell(1)\big)^{\frac{r_2+2}{\ell+1} +n-1},
\end{align*} where $C_2\doteq (r_2+2+(n-1)(\ell+1))^{-1}\left(\tfrac{\ell+1}{2}\right)^{\frac{r_2+2}{\ell+1} +n-1}$. Now, pointing out that the sequences $\{a_j\}_{j\in\mathbb{N}}$, $\{\alpha_j\}_{j\in\mathbb{N}}$ are increasing, we get immediately that the sequence $\left\{1-(\frac{1}{2}+\frac{\phi_\ell(1)}{8a_j R+4(2\alpha_j+3)\phi_\ell(1)})\right\}_{j\in\mathbb{N}}$ is increasing as well. Thus, for $\varrho\in[0, A_\ell(t)-(8a_j+1)R-4(2\alpha_j+1)\phi_\ell(1))]$ we conclude that 
\begin{align*}
 \int_{A_\ell^{-1}\left(\frac{1}{4}(A_\ell(t)-\varrho-R)\right)}^{A_\ell^{-1}\left(\frac{1}{2}(A_\ell(t)-\varrho-R)\right)}  b^{r_2+1+(n-1)(\ell+1)} \mathrm{d}b  \geqslant
C_2 C_3 \big(A_\ell(t)-\varrho-R+2\phi_\ell(1)\big)^{\frac{r_2+2}{\ell+1} +n-1},
\end{align*} where $C_3\doteq 1 -\left(\frac{1}{2}+\frac{\phi_\ell(1)}{8a_0 R+4(2\alpha_0+3)\phi_\ell(1)}\right)^{\frac{r_2+2}{\ell+1}+(n-1)}  $. Combining the last inequality with  \eqref{1st l.b.e. I tilde j+1(t,rho)}, we obtain
\begin{align*}
\widetilde{I}_{j+1}(t,\varrho)  & \geqslant C_2 C_3 \!\left(\!\ln\!\left(\frac{A_\ell(t)-\varrho+\left(\frac{4(\theta a_j-1)}{1-\theta}-1\right)R+\frac{4(\theta \alpha_j+2)}{1-\theta}\phi_\ell(1)}{\frac{4(a_j-1)}{1-\theta}R+\frac{ 4(\alpha_j+2)}{1-\theta}\phi_\ell(1)}\right)\!\right)^{\frac{p^{j+1}-1}{p-1}} \!\big(A_\ell(t)-\varrho-R+2\phi_\ell(1)\big)^{\frac{r_2+2}{\ell+1} +n-1}.
\end{align*} The previous estimate together with \eqref{1st l.b.e. Ij+1(t,rho)} and \eqref{1st l.b.e. I tilde j+1(t,rho)} implies that
\begin{align*}
I_{j+1}(t,\varrho) &  \geqslant C_4 \big(A_\ell(t)-\varrho-R+2\phi_\ell(1)\big)^{-\frac{\ell+\mu}{2(\ell+1)}p+\frac{r_2+2}{\ell+1} +(n-1)(1-\frac{p}{2})} \\ &  \quad \times \left(\ln\left(\frac{A_\ell(t)-\varrho+\left(\frac{4(\theta a_j-1)}{1-\theta}-1\right)R+\frac{4(\theta \alpha_j+2)}{1-\theta}\phi_\ell(1)}{\frac{4(a_j-1)}{1-\theta}R+\frac{ 4(\alpha_j+2)}{1-\theta}\phi_\ell(1)}\right)\right)^{\frac{p^{j+1}-1}{p-1}} 
\end{align*}
for $t\geqslant A_\ell^{-1}((8a_j+1)R+4(2\alpha_j+1)\phi_\ell(1))$ and $\varrho\in[0, A_\ell(t)-(8a_j+1)R-4(2\alpha_j+1)\phi_\ell(1))]$, where the multiplicative constant is given by $C_4 \doteq C_1 C_2 C_3 \left(\tfrac{\ell+1}{2}\right)^{-(\frac{n-1}{2}+\frac{\ell+\mu}{2(\ell+1)})p}$. By using the last  estimate for $I_{j+1}(t,\varrho)$, we are now ready to estimate the integral $J_{j+1}(t)$. For $t\geqslant A_\ell^{-1}((8a_j+1)R+4(2\alpha_j+1)\phi_\ell(1))$ we get
\begin{align*}
 J_{j+1}(t) & \geqslant \int_0^{A_\ell(t)-(8a_j+1)R-4(2\alpha_j+1)\phi_\ell(1)} \varrho^{(n-1)[1-\frac{p}{2}]_{+}} \frac{\left((\phi_\ell(t)+R_1)^2-\varrho^2\right)^{-\gamma p}}{(A_\ell(t)+R-\varrho)^{\frac{n-1}{2}p}} \left(I_{j+1}(t,\varrho)\right)^p \mathrm{d}\varrho \\
 & \geqslant C_4^p\int_0^{A_\ell(t)-(8a_j+1)R-4(2\alpha_j+1)\phi_\ell(1)} \varrho^{(n-1)[1-\frac{p}{2}]_{+}} \frac{\left((\phi_\ell(t)+R_1)^2-\varrho^2\right)^{-\gamma p}}{(A_\ell(t)+R-\varrho)^{\frac{n-1}{2}p}} \\
 &  \qquad \qquad \times \big(A_\ell(t)-\varrho-R+2\phi_\ell(1)\big)^{-\left(\frac{n-1}{2}+\frac{\ell+\mu}{2(\ell+1)}\right) p^2+\left(n-1+\frac{r_2+2}{\ell+1} \right)p} 
  \\
 &  \qquad \qquad \times \left(\ln\left(\frac{A_\ell(t)-\varrho+\left(\frac{4(\theta a_j-1)}{1-\theta}-1\right)R+\frac{4(\theta \alpha_j+2)}{1-\theta}\phi_\ell(1)}{\frac{4(a_j-1)}{1-\theta}R+\frac{ 4(\alpha_j+2)}{1-\theta}\phi_\ell(1)}\right)\right)^{\frac{p^{j+2}-p}{p-1}}  \mathrm{d}\varrho.
\end{align*}
Next, we observe that the factor $ \left((\phi_\ell(t)+R_1)^2-\varrho^2\right)^{-\gamma p}(A_\ell(t)+R-\varrho)^{-\frac{n-1}{2}p}$ can be estimated from below exactly as we did in Subsection \ref{Subsection base case} (as $j$ grows, the domain of integration in $J_{j+1}(t)$ shrinks). 

Therefore, for $t\geqslant A_\ell^{-1}((8a_j+1)R+4(2\alpha_j+1)\phi_\ell(1))\geqslant A_\ell^{-1}(a_0R+\alpha_0\phi_\ell(1))$ the following estimate holds
\begin{align*}
\frac{\left((\phi_\ell(t)+R_1)^2-\varrho^2\right)^{-\gamma p}}{(A_\ell(t)+R-\varrho)^{\frac{n-1}{2}p}}\geqslant \widetilde{B} t^{-\frac{\ell p}{2}-\frac{1-\sqrt{\delta}}{2}p}\big(A_\ell(t)-\varrho-R+2\phi_\ell(1)\big)^{-\left(\frac{n-1}{2}+\gamma\right)p},
\end{align*} where $\widetilde{B}$ is defined in \eqref{def widetildeB}. By \eqref{equation for p crit}, we get that 
\begin{align*}
-\left(\tfrac{n-1}{2}+\tfrac{\ell+\mu}{2(\ell+1)}\right) p^2+\left(n-1+\tfrac{r_2+2}{\ell+1} \right)p-\left(\tfrac{n-1}{2}+\gamma\right)p  & =  -\left(\tfrac{n-1}{2}+\tfrac{\ell+\mu}{2(\ell+1)}\right) p^2+\left(\tfrac{n-1}{2}+\tfrac{r_2+2}{\ell+1}-\gamma \right)p \\
 & =  -\left(\tfrac{n-1}{2}+\tfrac{\ell+\mu}{2(\ell+1)}\right) p^2+\left(\tfrac{n+1}{2}+\tfrac{\mu-3\ell}{2(\ell+1)}\right)p  =-1.
\end{align*} Consequently,  for $t\geqslant A_\ell^{-1}((8a_j+1)R+4(2\alpha_j+1)\phi_\ell(1))$ we find that
\begin{align*}
J_{j+1}(t)\geqslant C_4^p \widetilde{B} \, t^{-\frac{\ell p}{2}-\frac{1-\sqrt{\delta}}{2}p} 
& \int_0^{A_\ell(t)-(8a_j+1)R-4(2\alpha_j+1)\phi_\ell(1)} \varrho^{(n-1)[1-\frac{p}{2}]_{+}}  \big(A_\ell(t)-\varrho-R+2\phi_\ell(1)\big)^{-1} 
  \\
 & \times \left(\ln\left(\frac{A_\ell(t)-\varrho+\left(\frac{4(\theta a_j-1)}{1-\theta}-1\right)R+\frac{4(\theta \alpha_j+2)}{1-\theta}\phi_\ell(1)}{\frac{4(a_j-1)}{1-\theta}R+\frac{ 4(\alpha_j+2)}{1-\theta}\phi_\ell(1)}\right)\right)^{\frac{p^{j+2}-p}{p-1}}  \mathrm{d}\varrho.
\end{align*} In the next step, we increase the the lower extreme of integration to $\theta(A_\ell(t)-(8a_j+1)R-4(2\alpha_j+1)\phi_\ell(1))$, obtaining for $t\geqslant A_\ell^{-1}((8a_j+1)R+4(2\alpha_j+1)\phi_\ell(1))$
\begin{align}
J_{j+1}(t)\geqslant C_4^p \widetilde{B} \theta^{(n-1)[1-\frac{p}{2}]_{+}} \, t^{-\frac{\ell p}{2}-\frac{1-\sqrt{\delta}}{2}p}  (A_\ell(t)-(8a_j+1)R-4(2\alpha_j+1)\phi_\ell(1))^{(n-1)[1-\frac{p}{2}]_{+}} \widetilde{J}_{j+1}(t), \label{final estimate Jj+1(t)}
\end{align} where 
\begin{align*}
\widetilde{J}_{j+1}(t) \doteq  &\int_{\theta(A_\ell(t)-(8a_j+1)R-4(2\alpha_j+1)\phi_\ell(1))}^{A_\ell(t)-(8a_j+1)R-4(2\alpha_j+1)\phi_\ell(1)} \big(A_\ell(t)-\varrho-R+2\phi_\ell(1)\big)^{-1} 
  \\
  & \qquad \times  \left(\ln\left(\frac{A_\ell(t)-\varrho+\left(\frac{4(\theta a_j-1)}{1-\theta}-1\right)R+\frac{4(\theta \alpha_j+2)}{1-\theta}\phi_\ell(1)}{\frac{4(a_j-1)}{1-\theta}R+\frac{ 4(\alpha_j+2)}{1-\theta}\phi_\ell(1)}\right)\right)^{\frac{p^{j+2}-p}{p-1}}  \mathrm{d}\varrho
\end{align*}

Our next goal is to estimate from below the integral $\widetilde{J}_{j+1}(t)$.
We begin by decreasing the argument of the logarithmic factor in the following way:
\begin{align}
\frac{A_\ell(t)-\varrho+\!\left(\frac{4(\theta a_j-1)}{1-\theta}-1\right) \! R+\frac{4(\theta \alpha_j+2)}{1-\theta}\phi_\ell(1)}{\frac{4(a_j-1)}{1-\theta}R+\frac{ 4(\alpha_j+2)}{1-\theta}\phi_\ell(1)} & \geqslant
\frac{A_\ell(t)-\varrho+\!\left(\frac{4(\theta a_j-1)}{1-\theta}-1-4a_j\right)\!R+\left(\frac{4(\theta \alpha_j+2)}{1-\theta}-4(\alpha_j+1)\right)\!\phi_\ell(1)}{\frac{4(a_j-1)}{1-\theta}R+\frac{ 4(\alpha_j+2)}{1-\theta}\phi_\ell(1)} \notag \\ & =
\frac{A_\ell(t)-\varrho+\frac{4(2\theta-1) a_j-5+\theta}{1-\theta}R+\frac{4(2\theta-1) \alpha_j+4(1+\theta)}{1-\theta}\phi_\ell(1)}{\frac{4(a_j-1)}{1-\theta}R+\frac{ 4(\alpha_j+2)}{1-\theta}\phi_\ell(1)} . \label{estimate log term}
\end{align} Thanks to the assumptions $\theta\in (\frac{1}{2},1)$ and \eqref{a0, alpha0 conditions}, it follows that $a_j$ and $\alpha_j$ satisfy for any $j\in\mathbb{N}$
\begin{align}\label{l.b. aj & alphaj}
a_j\geqslant \frac{1}{2\theta -1}, \qquad  \alpha_j\geqslant -\frac{1+3\theta}{2(2\theta-1)}.
\end{align} The condition on $\{\alpha_j\}_{j\in\mathbb{N}}$ is obviously true, since this is a sequence of positive real numbers, while the condition on $\{a_j\}_{j\in\mathbb{N}}$ is valid since this sequence is increasing and we assumed its validity for $j=0$ in \eqref{a0, alpha0 conditions}.
Therefore, by  \eqref{l.b. aj & alphaj} it follows that
\begin{align}\label{estimate term power -1}
A_\ell(t)-\varrho-R+2\phi_\ell(1)\leqslant A_\ell(t)-\varrho+\tfrac{4(2\theta-1) a_j-5+\theta}{1-\theta}R+\tfrac{4(2\theta-1) \alpha_j+4(1+\theta)}{1-\theta}\phi_\ell(1)
\end{align} Hence, for $t\geqslant A_\ell^{-1}((8a_j+1)R+4(2\alpha_j+1)\phi_\ell(1))$, combining \eqref{estimate log term} and \eqref{estimate term power -1}, we find that $\widetilde{J}_{j+1}(t)$  can be estimated from below by
\begin{align}
 & \tfrac{p-1}{p^{j+2}-1}\!\!\left[-\!\left(\!\ln\left(\!\frac{A_\ell(t)-\varrho+\frac{4(2\theta-1) a_j-5+\theta}{1-\theta}R+\frac{4(2\theta-1) \alpha_j+4(1+\theta)}{1-\theta}\phi_\ell(1)}{\frac{4(a_j-1)}{1-\theta}R+\frac{ 4(\alpha_j+2)}{1-\theta}\phi_\ell(1)}\right)\!\right)^{\frac{p^{j+2}-1}{p-1}} \right]^{\varrho=A_\ell(t)-(8a_j+1)R-4(2\alpha_j+1)\phi_\ell(1)}_{\varrho=\theta(A_\ell(t)-(8a_j+1)R-4(2\alpha_j+1)\phi_\ell(1))} \notag\\
 & = \tfrac{p-1}{p^{j+2}-1}\left(\ln\left(\frac{A_\ell(t)+\left(\frac{4(2\theta-1) a_j-5+\theta}{(1-\theta)^2}+\frac{\theta(8a_j+1)}{1-\theta}\right)R+\left(\frac{4(2\theta-1) \alpha_j+4(1+\theta)}{(1-\theta)^2}+\frac{4\theta(2\alpha_j+1)}{1-\theta}\right)\phi_\ell(1)}{\frac{4(a_j-1)}{(1-\theta)^2}R+\frac{ 4(\alpha_j+2)}{(1-\theta)^2}\phi_\ell(1)}\right)\right)^{\frac{p^{j+2}-1}{p-1}}.\label{final estimate J tilde j+1(t)}
\end{align}
Finally, we combine \eqref{l.b.e. |u(t)|^p_L^p begin inductive step}, \eqref{final estimate Jj+1(t)} and \eqref{final estimate J tilde j+1(t)}, obtaining for $t\geqslant A_\ell^{-1}((8a_j+1)R+4(2\alpha_j+1)\phi_\ell(1))$
\begin{align}
\| u(t,\cdot)\|^p_{L^p(\mathbb{R}^n)} &  \geqslant  KC_4^p \widetilde{B} \theta^{(n-1)[1-\frac{p}{2}]_{+}} \tfrac{p-1}{p^{j+2}-1} B_j^p \varepsilon^{p^{j+3}} t^{-\frac{\ell+\mu }{2}p}    \frac{(A_\ell(t)-(8a_j+1)R-4(2\alpha_j+1)\phi_\ell(1))^{(n-1)[1-\frac{p}{2}]_{+}} }{(A_\ell(t)+R)^{(n-1)[1-\frac{p}{2}]_{-}} } \notag \\ &\times \left(\ln\left(\frac{A_\ell(t)+\left(\frac{4(2\theta-1) a_j-5+\theta}{(1-\theta)^2}+\frac{\theta(8a_j+1)}{1-\theta}\right)R+\left(\frac{4(2\theta-1) \alpha_j+4(1+\theta)}{(1-\theta)^2}+\frac{4\theta(2\alpha_j+1)}{1-\theta}\right)\phi_\ell(1)}{\frac{4(a_j-1)}{(1-\theta)^2}R+\frac{ 4(\alpha_j+2)}{(1-\theta)^2}\phi_\ell(1)}\right)\right)^{\frac{p^{j+2}-1}{p-1}}. \label{l.b.e. |u(t)|^p L^p log almost j+1} 
\end{align} Since we want to obtain \eqref{(j+1)-th low bound ||u||^p with log} for $j+1$ from \eqref{l.b.e. |u(t)|^p L^p log almost j+1}, we set $D\doteq KC_4^p \widetilde{B} \theta^{(n-1)[1-\frac{p}{2}]_{+}}(1-\frac{1}{p})$ and impose the recursive relations  $B_{j+1}= Dp^{-(j+1)}B_j^p$, which is exactly \eqref{recursive relation Bj}, and 
\begin{align}
\frac{a_{j+1}-1}{1-\theta}&= \frac{4(a_j-1)}{(1-\theta)^2} \quad \Rightarrow \quad  a_{j+1}= 1+\frac{4(a_j-1)}{1-\theta}, \label{rec rel aj} \\
\frac{\alpha_{j+1}+2}{1-\theta}&= \frac{4(\alpha_j+2)}{(1-\theta)^2} \quad \Rightarrow \quad  \alpha_{j+1}= -2+\frac{4(\alpha_j+2)}{1-\theta}.  \label{rec rel alphaj} 
\end{align}
Clearly, \eqref{rec rel aj} and \eqref{rec rel alphaj}  tell us that $\{a_j-1\}_{j\in\mathbb{N}}$ and $\{\alpha_j+2\}_{j\in\mathbb{N}}$ are geometric sequences, hence the representations in \eqref{def aj} and \eqref{def alphaj}. Summarizing the previous notations, we may rewrite \eqref{l.b.e. |u(t)|^p L^p log almost j+1} as 
\begin{align}
\| u(t,\cdot)\|^p_{L^p(\mathbb{R}^n)} & \geqslant  B_{j+1} \varepsilon^{p^{j+3}} t^{-\frac{\ell+\mu }{2}p}    \frac{(A_\ell(t)-(8a_j+1)R-4(2\alpha_j+1)\phi_\ell(1))^{(n-1)[1-\frac{p}{2}]_{+}} }{(A_\ell(t)+R)^{(n-1)[1-\frac{p}{2}]_{-}} } \notag \\ & \quad \times \left(\ln\left(\frac{A_\ell(t)+\left(\frac{4(2\theta-1) a_j-5+\theta}{(1-\theta)^2}+\frac{\theta(8a_j+1)}{1-\theta}\right)R+\left(\frac{4(2\theta-1) \alpha_j+4(1+\theta)}{(1-\theta)^2}+\frac{4\theta(2\alpha_j+1)}{1-\theta}\right)\phi_\ell(1)}{\frac{a_{j+1}-1}{1-\theta}R+\frac{ \alpha_{j+1}+2}{1-\theta}\phi_\ell(1)}\right)\right)^{\frac{p^{j+2}-1}{p-1}} \label{l.b.e. |u(t)|^p L^p log really almost j+1} 
\end{align} for $t\geqslant A_\ell^{-1}((8a_j+1)R+4(2\alpha_j+1)\phi_\ell(1))$. Thus, in order to obtain \eqref{(j+1)-th low bound ||u||^p with log} for $j+1$ from \eqref{l.b.e. |u(t)|^p L^p log really almost j+1} we just have to check that base of the $(n-1)[1-\frac{p}{2}]_{+}$ power may be decreased as follows:
\begin{align}\label{final est (n-1)[1-p/2]+ power}
A_\ell(t)-(8a_j+1)R-4(2\alpha_j+1)\phi_\ell(1) \geqslant A_\ell(t)-a_{j+1}R-\alpha_{j+1}\phi_\ell(1),
\end{align}
and that the numerator of the argument of the logarithmic factor may be decreased too, more precisely, the following inequality must be fulfilled:
\begin{align} \label{final est log term}
A_\ell(t)+\left(\tfrac{4(2\theta-1) a_j-5+\theta}{(1-\theta)^2}+\tfrac{\theta(8a_j+1)}{1-\theta}\right)R+\left(\tfrac{4(2\theta-1) \alpha_j+4(1+\theta)}{(1-\theta)^2}+\tfrac{4\theta(2\alpha_j+1)}
{1-\theta}\right)\phi_\ell(1)\geqslant  A_\ell(t)+\tfrac{\theta a_{j+1}-1}{1-\theta}R+\tfrac{\theta \alpha_{j+1}+2}{1-\theta}\phi_\ell(1).
\end{align}
Elementary computations show that the conditions in \eqref{l.b. aj & alphaj} guarantee the validity of both \eqref{final est (n-1)[1-p/2]+ power} and \eqref{final est log term}. Therefore, for $t\geqslant A_\ell^{-1}(a_{j+1}R+\alpha_{j+1}\phi_\ell(1))\geqslant A_\ell^{-1}((8a_j+1)R+4(2\alpha_j+1)\phi_\ell(1))$ we proved that \eqref{(j+1)-th low bound ||u||^p with log} is satisfied for $j+1$. This completes the inductive proof of \eqref{(j+1)-th low bound ||u||^p with log} for any $j\in\mathbb{N}$.
\end{proof}

\subsection{Improved lower bound estimates for the spatial average of the solution} \label{Subsection log lb U}

In this subsection, we derive a sequence of lower bound estimates for the functional $U(t)$ that improve \eqref{1st low bound ||u||^p} thanks to the presence of logarithmic factors. Since we assumed $u_0,u_1$ to be nonnegative, we have that $U^{\mathrm{lin}}(t)\geqslant 0$ for any $t\geqslant 1$. Thus, from \eqref{Representation for U} we get 
\begin{align*}
U(t) & \geqslant  t^{-r_1} \int_1^t s^{r_1-r_2-1} \int_1^s \tau^{r_2+1} \, \|u(\tau,\cdot)\|^p_{L^p(\mathbb{R}^n)} \, \mathrm{d}\tau \, \mathrm{d}s
\end{align*}
for any $t\geqslant 1$. If we plug the lower bound estimate \eqref{(j+1)-th low bound ||u||^p with log} into the previous inequality, we arrive at 
\begin{align*}
U(t)  \geqslant  B_j\varepsilon^{p^{j+2}} t^{-r_1} \int_{A_\ell^{-1}(a_j R+\alpha_j \phi_\ell(1))}^t s^{r_1-r_2-1} & \int_{A_\ell^{-1}(a_j R+\alpha_j \phi_\ell(1))}^s \tau^{r_2+1-\frac{\ell+\mu }{2}p}\frac{(A_\ell(\tau)-a_jR-\alpha_j\phi_\ell(1))^{(n-1)[1-\frac{p}{2}]_{+}}}{(A_\ell(\tau)+R)^{(n-1)[1-\frac{p}{2}]_{-}}}  \\ &  \times  \left(\ln\left(\frac{A_\ell(\tau)+\frac{\theta a_j-1}{1-\theta}R+\frac{\theta \alpha_j+2}{1-\theta}\phi_\ell(1)}{\frac{a_j-1}{1-\theta}R+\frac{ \alpha_j+2}{1-\theta}\phi_\ell(1)}\right)\right)^{\frac{p^{j+1}-1}{p-1}} \, \mathrm{d}\tau \, \mathrm{d}s.
\end{align*}
Recalling that
\begin{align*}
A_\ell(\tau)-a_jR-\alpha_j\phi_\ell(1)\geqslant \tfrac 12 \phi_\ell(\tau) \quad & \Leftrightarrow \quad \tau \geqslant A_\ell^{-1}(2a_j R+ (2\alpha_j+1)\phi_\ell(1)) \\
A_\ell(\tau)+R \leqslant 2 \phi_\ell(\tau) \quad & \Leftrightarrow \quad \tau \geqslant A_\ell^{-1} (R-2\phi_\ell(1))
\end{align*}  we obtain for $t\geqslant A_\ell^{-1}(2a_j R+ (2\alpha_j+1)\phi_\ell(1)) $
\begin{align*}
U(t)  & \geqslant 2^{-(n-1)|1-\frac p2|}(\ell+1)^{(n-1)(\frac p2 -1)} B_j\varepsilon^{p^{j+2}} t^{-r_1} \int_{A_\ell^{-1}(2a_j R+(2\alpha_j+1) \phi_\ell(1))}^t s^{r_1-r_2-1}  \\ & \quad \times \int_{A_\ell^{-1}(2a_j R+(2\alpha_j+1) \phi_\ell(1))}^s \tau^{r_2+1-\frac{\ell+\mu }{2}p+(n-1)(\ell+1)(1-\frac{p}{2})} \left(\ln\left(\frac{A_\ell(\tau)+\frac{\theta a_j-1}{1-\theta}R+\frac{\theta \alpha_j+2}{1-\theta}\phi_\ell(1)}{\frac{a_j-1}{1-\theta}R+\frac{ \alpha_j+2}{1-\theta}\phi_\ell(1)}\right)\right)^{\frac{p^{j+1}-1}{p-1}} \, \mathrm{d}\tau \, \mathrm{d}s.
\end{align*} For $s\geqslant A_\ell^{-1}(4a_jR+2(2\alpha_j+1)\phi_\ell(1))$, we can increase the lower extreme of integration in the $\tau$-integral from $A_\ell^{-1}(2a_j R+(2\alpha_j+1) \phi_\ell(1))$ to $A_\ell^{-1}(\frac{1}{2}A_\ell(s))$. Hence, for $t\geqslant A_\ell^{-1}(4a_jR+2(2\alpha_j+1)\phi_\ell(1))$ we have
\begin{align}
U(t)  & \geqslant 2^{-(n-1)|1-\frac p2|}(\ell+1)^{(n-1)(\frac p2 -1)} B_j\varepsilon^{p^{j+2}} t^{-r_1} \int_{A_\ell^{-1}(4a_jR+2(2\alpha_j+1)\phi_\ell(1))}^t s^{r_1-r_2-1-\left((n-1)(\ell+1)+\ell+\mu)\right)\frac{p}{2}}  \notag \\ &\qquad \times \int_{A_\ell^{-1}\left(\frac{1}{2}A_\ell(s)\right)}^s \tau^{r_2+1+(n-1)(\ell+1)}   \left(\ln\left(\frac{A_\ell(\tau)+\frac{\theta a_j-1}{1-\theta}R+\frac{\theta \alpha_j+2}{1-\theta}\phi_\ell(1)}{\frac{a_j-1}{1-\theta}R+\frac{ \alpha_j+2}{1-\theta}\phi_\ell(1)}\right)\right)^{\frac{p^{j+1}-1}{p-1}} \, \mathrm{d}\tau \, \mathrm{d}s \notag \\
 & \geqslant 2^{-(n-1)|1-\frac p2|}(\ell+1)^{(n-1)(\frac p2 -1)} B_j\varepsilon^{p^{j+2}} t^{-r_1} \int_{A_\ell^{-1}(4a_jR+2(2\alpha_j+1)\phi_\ell(1))}^t s^{r_1-r_2-1-\left((n-1)(\ell+1)+\ell+\mu)\right)\frac{p}{2}}  \notag \\ &\qquad \times  \left(\ln\left(\frac{A_\ell(s)+\frac{2(\theta a_j-1)}{1-\theta}R+\frac{2(\theta \alpha_j+2)}{1-\theta}\phi_\ell(1)}{\frac{2(a_j-1)}{1-\theta}R+\frac{ 2(\alpha_j+2)}{1-\theta}\phi_\ell(1)}\right)\right)^{\frac{p^{j+1}-1}{p-1}}  \int_{A_\ell^{-1}\left(\frac{1}{2}A_\ell(s)\right)}^s \tau^{r_2+1+(n-1)(\ell+1)}  \, \mathrm{d}\tau \, \mathrm{d}s. \label{lb U(t) int with ln s term}
\end{align} Let us estimate from  below the last $\tau$-integral, for  $s\geqslant A_\ell^{-1}(4a_jR+2(2\alpha_j+1)\phi_\ell(1))$ it holds
\begin{align}
\int_{A_\ell^{-1}\left(\frac{1}{2}A_\ell(s)\right)}^s \tau^{r_2+1+(n-1)(\ell+1)}  \, \mathrm{d}\tau  &= \frac{s^{r_2+2+(n-1)(\ell+1)}-\left(A_\ell^{-1}\left(\tfrac{1}{2} A_\ell(s)\right)\right)^{r_2+2+(n-1)(\ell+1)}}{r_2+2+(n-1)(\ell+1)} \notag \\ & = (r_2+2+(n-1)(\ell+1))^{-1}\left(1-\left(\frac{1+s^{-(\ell+1)}}{2}\right)^{\frac{r_2+2}{\ell+1}+n-1}\right) \, s^{r_2+2+(n-1)(\ell+1)} \notag \\
& \geqslant (r_2+2+(n-1)(\ell+1))^{-1} \beta_j \, s^{r_2+2+(n-1)(\ell+1)}, \label{tau integral U(t) lb}
\end{align} where we used that 
\begin{align*}
A_\ell^{-1}\left(\tfrac{1}{2} A_\ell(s)\right)= \left(\frac{s^{\ell+1}+1}{2}\right)^{\frac{1}{\ell+1}}
\end{align*} and we introduced the increasing sequence $\{\beta_j\}_{j\in\mathbb{N}}$ whose $j$-th term is given by
\begin{align*}
\beta_j &\doteq  1-\left(\frac{1+(A_\ell^{-1}(4a_jR+2(2\alpha_j+1)\phi_\ell(1)))^{-(\ell+1)}}{2}\right)^{\frac{r_2+2}{\ell+1}+n-1} \\
 &= 1-\left(\frac{1}{2}\left(1+\frac{\phi_\ell(1)}{4a_jR+(4\alpha_j+3)\phi_\ell(1)}\right)\right)^{\frac{r_2+2}{\ell+1}+n-1}.
\end{align*} Combining \eqref{lb U(t) int with ln s term} and \eqref{tau integral U(t) lb}, for $t\geqslant A_\ell^{-1}(4a_jR+2(2\alpha_j+1)\phi_\ell(1))$ we obtain
\begin{align*}
U(t)    \geqslant \widehat{M}  \beta_j B_j\varepsilon^{p^{j+2}} t^{-r_1} &\int_{A_\ell^{-1}(4a_jR+2(2\alpha_j+1)\phi_\ell(1))}^t s^{r_1+1-\frac{\ell+\mu}{2}p+(n-1)(\ell+1)(1-\frac{p}{2})}  \\ & \qquad \times  \left(\ln\left(\frac{A_\ell(s)+\frac{2(\theta a_j-1)}{1-\theta}R+\frac{2(\theta \alpha_j+2)}{1-\theta}\phi_\ell(1)}{\frac{2(a_j-1)}{1-\theta}R+\frac{ 2(\alpha_j+2)}{1-\theta}\phi_\ell(1)}\right)\right)^{\frac{p^{j+1}-1}{p-1}} \, \mathrm{d}s,
\end{align*} where $\widehat{M}\doteq 2^{-(n-1)|1-\frac p2|}(\ell+1)^{(n-1)(\frac p2 -1)}(r_2+2+(n-1)(\ell+1))^{-1}$.

Analogously to what we did for the $\tau$-integral, taking $t\geqslant A_\ell^{-1}(8a_jR+4(2\alpha_j+1))$ we can increase the lower bound of the domain of integration in the previous $s$-integral from $A_\ell^{-1}(4a_jR+2(2\alpha_j+1)\phi_\ell(1))$ to $A_\ell^{-1}(\frac{1}{2}A_\ell(t))$, obtaining
\begin{align*}
U(t)    &\geqslant \widehat{M} \beta_j B_j\varepsilon^{p^{j+2}} t^{-r_1-\frac{\ell+\mu}{2}p-(n-1)(\ell+1)\frac{p}{2}}  \\ &\qquad \times \int_{A_\ell^{-1}(\frac{1}{2}A_\ell(t))}^t s^{r_1+1+(n-1)(\ell+1)}  \left(\ln\left(\frac{A_\ell(s)+\frac{2(\theta a_j-1)}{1-\theta}R+\frac{2(\theta \alpha_j+2)}{1-\theta}\phi_\ell(1)}{\frac{2(a_j-1)}{1-\theta}R+\frac{ 2(\alpha_j+2)}{1-\theta}\phi_\ell(1)}\right)\right)^{\frac{p^{j+1}-1}{p-1}} \, \mathrm{d}s \\
&\geqslant \widehat{M}   \beta_j B_j\varepsilon^{p^{j+2}} t^{-r_1-\frac{\ell+\mu}{2}p-(n-1)(\ell+1)\frac{p}{2}}  \\ &\qquad \times \left(\ln\left(\frac{A_\ell(t)+\frac{4(\theta a_j-1)}{1-\theta}R+\frac{4(\theta \alpha_j+2)}{1-\theta}\phi_\ell(1)}{\frac{4(a_j-1)}{1-\theta}R+\frac{ 4(\alpha_j+2)}{1-\theta}\phi_\ell(1)}\right)\right)^{\frac{p^{j+1}-1}{p-1}} \int_{A_\ell^{-1}(\frac{1}{2}A_\ell(t))}^t s^{r_1+1+(n-1)(\ell+1)}   \, \mathrm{d}s \\
&\geqslant  \widetilde{M}  \beta_j \widetilde{\beta}_j B_j\varepsilon^{p^{j+2}} t^{2-\frac{\ell+\mu}{2}p+(n-1)(\ell+1)(1-\frac{p}{2})}   \left(\ln\left(\frac{A_\ell(t)+\frac{4(\theta a_j-1)}{1-\theta}R+\frac{4(\theta \alpha_j+2)}{1-\theta}\phi_\ell(1)}{\frac{4(a_j-1)}{1-\theta}R+\frac{ 4(\alpha_j+2)}{1-\theta}\phi_\ell(1)}\right)\right)^{\frac{p^{j+1}-1}{p-1}} 
\\
&\geqslant  \widetilde{M}  \beta_j \widetilde{\beta}_j B_j\varepsilon^{p^{j+2}} t^{2-\frac{\ell+\mu}{2}p+(n-1)(\ell+1)(1-\frac{p}{2})}   \left(\ln\left(\frac{A_\ell(t)}{\frac{4(a_j-1)}{1-\theta}R+\frac{ 4(\alpha_j+2)}{1-\theta}\phi_\ell(1)}\right)\right)^{\frac{p^{j+1}-1}{p-1}} ,
\end{align*} where the multiplicative constant is $\widetilde{M} \doteq \widehat{M} \left(r_1+2+(n-1)(\ell+1)\right)^{-1} $, the $j$-th term of the increasing sequence $\{\widetilde{\beta}_j \}_{j\in\mathbb{N}}$ is defined by
\begin{align*}
\widetilde{\beta}_j \doteq  1-\left(\frac{1}{2}\left(1+\frac{\phi_\ell(1)}{8a_jR+(8\alpha_j+5)\phi_\ell(1)}\right)\right)^{\frac{r_1+2}{\ell+1}+n-1}
\end{align*} and in the last inequality we used that $\theta>\frac{1}{2}$ and $a_j\geqslant 2$, $\alpha_j\geqslant 0$ for any $j\in\mathbb{N}$.

By elementary computations, we find that
\begin{align*}
&\ln\left(\frac{A_\ell(t)}{\frac{4(a_j-1)}{1-\theta}R+\frac{ 4(\alpha_j+2)}{1-\theta}\phi_\ell(1)}\right) \geqslant\tfrac{1}{2}\ln A_\ell(t) &&  \Leftrightarrow \quad t\geqslant A_\ell^{-1}\left(\tfrac{16}{(1-\theta)^2}\big((a_j-1)R+(\alpha_j+2)\phi_\ell(1)\big)^2\right),  \\
 &\ln A_\ell(t) \geqslant \ln\left(\tfrac{t^{\ell+1}}{2(\ell+1)}\right) &&  \Leftrightarrow \quad  t\geqslant 2^{\frac{1}{\ell+1}},\\
& \ln\left(\tfrac{t^{\ell+1}}{2(\ell+1)} \right) \geqslant \tfrac{\ell+1}{2} \ln t && \Leftrightarrow \quad  t\geqslant  (2(\ell+1))^{\frac{1}{\ell+1}}.
\end{align*} We introduce the sequence $\{\sigma_j\}_{j\in\mathbb{N}}$ such that
\begin{align} \label{def sigmaj} 
\sigma_j\doteq \max \left\{A_\ell^{-1}(8a_jR+4(2\alpha_j+1)),A_\ell^{-1}\left(\tfrac{16}{(1-\theta)^2}\big((a_j-1)R+(\alpha_j+2)\phi_\ell(1)\big)^2\right),2^{\frac{1}{\ell+1}},(2(\ell+1))^{\frac{1}{\ell+1}} \right\}.
\end{align}
Summarizing, for any $j\in\mathbb{N}$ and for any $t\geqslant \sigma_j$ we proved that 
\begin{align} \label{improved lb seq U(t) log }
U(t) \geqslant M \beta_j \widetilde{\beta}_j Q^{p^{j+1}} B_j \varepsilon^{p^{j+2}} (\ln t)^\frac{p^{j+1}-1}{p-1} t^{2-\frac{\ell+\mu}{2}p+(n-1)(\ell+1)(1-\frac{p}{2})}, 
\end{align} where $M\doteq \widetilde{M} (\frac{\ell+1}{4})^{-\frac{1}{p-1}}$ and $Q\doteq (\frac{\ell+1}{4})^{\frac{1}{p-1}} $. 

\subsection{Proof of Theorem \ref{Thm main}}

We complete the proof of the main theorem, by applying Lemma \ref{Lemma Kato-type} to the function $U(t)$.
By \eqref{ODE for U}, applying H\"older's inequality, it follows that $U(t)$ satisfies the ODI
\begin{align}\label{ODI U(t)}
U''(t)+\mu t^{-1} U'(t)+\nu^2 t^{-2} U(t) \geqslant C t^{-n(\ell+1)(p-1)}(U(t))^p
\end{align} for any $t\in [1,T(\varepsilon))$, where $T(\varepsilon)$ denotes the lifespan of the local in time  solution $u$. In \cite[Subsection 2.2]{Pal21} it is proved the lower bound estimate
\begin{align*}
U(t)\geqslant K t^{2-\frac{\ell+\mu}{2}p+(n-1)(\ell+1)(1-\frac{p}{2})}
\end{align*} for any $t\in [T_2,T(\varepsilon))$, where $T_2\geqslant 2$, and for a positive suitable constant $K$. Actually, in Subsection \ref{Subsection log lb U} we improved the previous estimate by proving the sequence of lower bound estimates in \eqref{improved lb seq U(t) log }. Moreover, being $u_0,u_1$ nonnegative data and thanks to \eqref{integral assumption Cauchy data}, we have that $U(1),U'(1)\geqslant 0$ and $U'(1)+r_1U(1)>0$. Therefore, $U(t)$ satisfies the assumptions of Lemma \ref{Lemma Kato-type} for $a=2-\tfrac{\ell+\mu}{2}p+(n-1)(\ell+1)(1-\frac{p}{2})$ and $q=n(\ell+1)(p-1)$ since $p=p_{\mathrm{Str}}\left(n+\frac{\mu}{\ell+1},\ell\right)$ satisfies \eqref{equation for p crit}. Hereafter, $K_0$ and $\widetilde{T}_0$ denotes the constants associated to the function $U(t)$ according to the statement of Lemma \ref{Lemma Kato-type}.

Let us introduce the notation 
\begin{align*}
K_j(t,\varepsilon) \doteq M \beta_j \widetilde{\beta}_j Q^{p^{j+1}} B_j \varepsilon^{p^{j+2}} (\ln t)^\frac{p^{j+1}-1}{p-1}
\end{align*} for the function that multiplies the power term in \eqref{improved lb seq U(t) log }. Then, we can rewrite \eqref{improved lb seq U(t) log } as
\begin{align*}
U(t) \geqslant  K_j(t,\varepsilon)  \, t^{2-\frac{\ell+\mu}{2}p+(n-1)(\ell+1)(1-\frac{p}{2})} 
\end{align*} for $j\in\mathbb{N}$ and $t\geqslant \sigma_j$. Since $B_j$ fulfills the recursive relation \eqref{recursive relation Bj}, we have that
\begin{align*}
\ln B_j & = p \ln B_{j-1} -j \ln p +\ln D 
\\ &= p^2 \ln B_{j-2} -(j +(j-1)p)\ln p +(1+p)\ln D  \\ & = \ldots = p^j B_0-\ln p\sum_{k=0}^{j-1}(j-k)p^k+\ln D \sum_{k=0}^{j-1}p^k \\ & = p^j B_0-\frac{\ln p}{p-1}\left(\frac{p^{j+1}-1}{p-1}-(j+1)\right)+\ln D \, \frac{p^j-1}{p-1} \\
&=p^j\left(\ln B_0-\frac{p\ln p}{(p-1)^2}+\frac{\ln D}{p-1}\right)+(j+1)\frac{\ln p }{p-1}+\frac{\ln p}{(p-1)^2}-\frac{\ln D}{p-1} \\ &=p^j \ln E_0 +(j+1)\frac{\ln p }{p-1}+\frac{\ln p}{(p-1)^2}-\frac{\ln D}{p-1},
\end{align*} where $E_0\doteq B_0 p^{-p/(p-1)^2}D^{1/(p-1)}$. Therefore, we have
\begin{align*}
K_j(t,\varepsilon) & = \exp\left\{ \ln M+ \ln \left(\beta_j \widetilde{\beta}_j\right)+p^{j+1} \ln Q +\ln B_j+p^{j+1}\ln\varepsilon^{p}+\ln\left[ (\ln t)^\frac{p^{j+1}-1}{p-1}\right]\right\} \\
& = \exp\left\{p^{j+1}\left(\ln \varepsilon^p+ \ln Q+\tfrac{1}{p}E_0+\ln \left[(\ln t)^\frac{1}{p-1}\right]\right)+ \ln N+ \ln \left(\beta_j \widetilde{\beta}_j\right) +(j+1)\tfrac{\ln p }{p-1}-\tfrac{1}{p-1}\ln\ln t\right\} \\
& = \exp\left\{p^{j+1}\ln \left(E_1 \varepsilon^p (\ln t)^\frac{1}{p-1}\right)+ \ln N+ \ln \left(\beta_j \widetilde{\beta}_j\right) +(j+1)\tfrac{\ln p }{p-1}-\tfrac{1}{p-1}\ln\ln t\right\},
\end{align*} where $N\doteq M p^{1/(p-1)^2}D^{-1/(p-1)}$ and $E_1\doteq Q E_0^{1/p}$. Now, we introduce $L(t,\varepsilon)\doteq \ln \left(E_1 \varepsilon^p (\ln t)^\frac{1}{p-1}\right)$, so, for any $j\in\mathbb{N}$
\begin{align*}
K_j(t,\varepsilon) & = \exp\left\{p^{j+1}L(t,\varepsilon)+ \ln N+ \ln \left(\beta_j \widetilde{\beta}_j\right) +(j+1)\tfrac{\ln p }{p-1}-\tfrac{1}{p-1}\ln\ln t\right\}.
\end{align*}
Clearly, $L(t,\varepsilon)\geqslant 1$ if and only if $t\geqslant \exp\left(E_2 \varepsilon^{-p(p-1)}\right) \doteq T_0(\varepsilon)$ where $E_2\doteq (\mathrm{e}E_1^{-1})^{p-1}$. Consequently, for $t\geqslant T_0(\varepsilon)$ and for any $j\in\mathbb{N}$ it holds
\begin{align*}
K_j(t,\varepsilon) & \geqslant \exp\left\{p^{j+1}+ \ln N+ \ln \left(\beta_j \widetilde{\beta}_j\right) +(j+1)\tfrac{\ln p }{p-1}-\tfrac{1}{p-1}\ln\ln t\right\}.
\end{align*}
We consider the family of intervals $\{\mathcal{I}(j)\}_{j\in\mathbb{N}}$ with $\mathcal{I}(j)\doteq [\sigma_j,\sigma_{j+1}]$. 

Since $a_j,\alpha_j$ grow exponentially, cf. \eqref{def aj} and \eqref{def alphaj}, according to \eqref{def sigmaj} there exists $j_0=j_0(\ell,\theta,R,a_0,\alpha_0)\in\mathbb{N}$ such that $\forall j\geqslant j_0$: $\sigma_j=A_\ell^{-1}\left(\tfrac{16}{(1-\theta)^2}\big((a_j-1)R+(\alpha_j+2)\phi_\ell(1)\big)^2\right)$.
For any $j\in\mathbb{N}, j\geqslant j_0$ and for any $t\in\mathcal{I}(j)$ such that $t\geqslant T_0(\varepsilon)$ it holds
\begin{align*}
K_j(t,\varepsilon) & \geqslant \exp\left\{p^{j+1}+ \ln N+ \ln \left(\beta_j \widetilde{\beta}_j\right) +(j+1)\tfrac{\ln p }{p-1}-\tfrac{1}{p-1}\ln\ln \left(\tfrac{16}{(1-\theta)^2}\big((a_{j+1}-1)R+(\alpha_{j+1}+2)\phi_\ell(1)\big)^2\right)\right\}.
\end{align*} Since the last term in the exponent on the right-hand side, namely $\ln \ln \sigma_{j+1}$, grows logarithmically  with respect to $j$ and the term $\ln(\beta_j \widetilde{\beta}_j)$ is bounded, as $j\to \infty$ the right-hand side of the previous inequality tends to $+\infty$. Hence, we may find $J=J(n,\ell, \mu,\nu^2,R,p,a_0,\alpha_0,\theta)\in\mathbb{N}$, $J\geqslant j_0$ such that for any $j\in\mathbb{N}, j\geqslant J$ and for any $t\in\mathcal{I}(j)$ such that $t\geqslant T_0(\varepsilon)$: $K_j(t,\varepsilon)  \geqslant K_0$. Then, for  any  $t\geqslant \sigma_J$ such that $t\geqslant  T_0(\varepsilon)$ we have 
\begin{align}\label{l.b. U K0}
U(t)\geqslant K_0 ^{2-\frac{\ell+\mu}{2}p+(n-1)(\ell+1)(1-\frac{p}{2})}.
\end{align} Since $T_0(\varepsilon)$ is decreasing with respect to $\varepsilon$, there exists  $\varepsilon_0=\varepsilon_0(n,\ell,\mu,\nu^2,R,p,a_0,\alpha_0,\theta, J,u_0,u_1)>0$ such that $T_0(\varepsilon_0)\geqslant \max\{\sigma_J,\widetilde{T}_0\}$. Therefore, for any $\varepsilon\in(0,\varepsilon_0]$ the function $U(t)$ satisfies the ODI  \eqref{ODI U(t)} for any $t\in [1,T(\varepsilon))$ and the lower bound estimate \eqref{l.b. U K0} for any $t\in [T_0(\varepsilon),T(\varepsilon))$, so $U(t)$ must blow up in finite time and the following upper bound estimate for the lifespan holds
\begin{align*}
T(\varepsilon)\leqslant 2 \max\{T_0(\varepsilon),\sigma_J,\widetilde{T}_0\}=2 T_0(\varepsilon)=2 \exp\left(E_1\varepsilon^{-p(p-1)}\right) \leqslant \exp\left(E\varepsilon^{-p(p-1)}\right),
\end{align*} where $E\doteq 2E_1$ and assumed without loss of generality that $\varepsilon_0\leqslant (\frac{\ln 2}{E_1})^{-1/(p(p-1))}$. The last estimate is exactly \eqref{lifespan est thm main} and this completes the proof of Theorem \ref{Thm main}.
\begin{remark} We point out that the smallness condition $\varepsilon\in(0,\varepsilon_0]$ for the Cauchy data is necessary to get the upper bound estimate for the lifespan. Without this condition, provided that $u_0,u_1$ satisfy the  assumptions of Theorem \ref{Thm main}, we have that for $t\geqslant\max\{T_0(\varepsilon),\sigma_j\}$ the lower bound estimate for $U(t)$ is \eqref{l.b. U K0} and, then, $U(t)$ has to blow up in finite time according to Lemma \ref{Lemma Kato-type}.
\end{remark}

\section{Concluding remarks and open problems}

The result of the present paper together with those in \cite{Pal21} complete the picture of blow-up results for the semilinear Cauchy problem \eqref{semilinear EPDT} when $\mu,\nu^2$ satisfy $\delta\geqslant 0$. Concerning the global existence counterpart of our results, so far, only the one dimensional case has been completely investigated in \cite[Section 5]{DAbb21}. In particular, combining the global existence result in  \cite{DAbb21} with our results, we have that 
\begin{align}\label{critical exponent}
\max\left\{p_{\mathrm{Str}}\left(n+\frac{\mu}{\ell+1},\ell\right),p_{\mathrm{Fuj}}\left((\ell+1)n+\tfrac{\mu-1-\sqrt{\delta}}{2}\right)\right\}
\end{align} is the critical exponent when $n=1$. Here, by critical exponent, we mean the threshold value for the power $p$ of the semilinear term $|u|^p$ in \eqref{semilinear EPDT} that separates the blow-up region (below the critical exponent) from the global existence of small-data solutions region (above the critical exponent). For further details on critical exponents we refer to \cite{L18}. In space dimension $n\geqslant 2$ there are only partial results for the global existence of small-data solutions. More specifically, for the wave equation with scale-invariant damping and mass (that is, when $\ell =0$), we cite \cite{DAbb15,Wak14,DLR15,DabbLuc15,NPR16,Pal18odd,Pal18even,Lai20,LZ21}   while for the Tricomi equation (that is, when $\mu=\nu^2=0$), we refer to \cite{HWY17,HWY17d2,HWY16}. In all the results mentioned above, the critical exponent is a special case of \eqref{critical exponent}.

A further open problem concerns the sharpness of the upper bound of the lifespan estimates in the sub-critical case/ critical case. In particular, for the estimate \eqref{lifespan est thm main} it is well known to be optimal in the case of the critical semilinear wave equation (i.e. for $\ell=\mu=\nu^2=0$ and $p=p_{\mathrm{Str}}(n,0)$), cf. \cite{TakWak11,ZH14,WakYor19}. However, the optimality in the general case is an open question.

In our problem under consideration, we impose the initial data on $t_0=1$, which makes us to avoid the singularity caused by the coefficients of the damping and mass term, we may refer it as regular Cauchy problem. However, if the initial data are prescribed at $t_0=0$, which corresponds to the singular Cauchy problem, it seems the critical power is unclear till now. We refer to the more detailed discussion in \cite{DAbb21}.

Another challenging question concerns the blow-up of energy solutions or of weak solutions (in the sense of Definition \ref{def sol}) in the critical case $p=p_{\mathrm{Str}}(n+\frac{\mu}{\ell+1},\ell)$: indeed, in  Theorem \ref{Thm main} we worked just with classical solutions, as we used Yagdjian's integral representation formula.

\section*{Acknowledgments}

N.-A. Lai was partially supported by NSFC (12271487, 12171097). A. Palmieri is member of the \emph{Gruppo Nazionale per L’Analisi Matematica, la Probabilità e le loro Applicazioni} (GNAMPA) of the \emph{Instituto Nazionale di Alta Matematica} (INdAM). A. Palmieri has been partially supported by  INdAM - GNAMPA Project 2024 ``Modelli locali e non-locali con perturbazioni non-lineari'' CUP E53C23001670001 and by ERC Seeds UniBa Project ``NWEinNES'' CUP H93C23000730001. H. Takamura is partially supported by the Grant-in-Aid for Scientific Research (A) (No.22H00097), Japan Society for the
Promotion of Science.

\addcontentsline{toc}{chapter}{Bibliography}


\begin{thebibliography}{00}  




%



\bibitem{CGL21} Chiarello F.A., Girardi G., Lucente S.,
\newblock{Fujita modified exponent for scale invariant damped semilinear wave equations.}
\newblock{\em J. Evol. Equ.} {\bf 21}: 2735--2748 (2021).

\bibitem{Co56} Copson E.T.,
\newblock{On a regular Cauchy problem for the Euler-Poisson-Darboux equation.}
\newblock{\em  Proc. R. Soc. A}, {\bf 235}, 1203: 560--572 (1956).

\bibitem{DAbb15}    D'Abbicco M.,
\newblock {The threshold of effective damping for semilinear wave equations.}
\newblock {\em Math. Methods Appl. Sci.} {\bf 38}(6):1032--1045  (2015).

\bibitem{DAbb21}  D'Abbicco M.,
\newblock{Small data solutions for the Euler-Poisson-Darboux equation with a power nonlinearity.}
\newblock {\em J. Differential Equations} {\bf 286}(10):531--556 (2021).

\bibitem{DL13}  D'Abbicco M.,  Lucente S., 
\newblock{A modified test function method for damped wave equations.} \newblock {\em Adv. Nonlinear Stud.} {\bf 13}(4):867--892 (2013).


\bibitem{DabbLuc15}  D'Abbicco M.,  Lucente S., (2015). 
\newblock{NLWE with a special scale invariant damping in odd space dimension.}
\newblock{\em Discrete Contin. Dyn. Syst.}
\newblock{Dynamical systems, differential equations and applications.} 10th AIMS Conference. Suppl., 312--319, doi: 10.3934/proc.2015.0312.

\bibitem{DLR15}   D'Abbicco M.,  Lucente S., Reissig  M., 
\newblock {A shift in the Strauss exponent for semilinear wave equations with a not effective damping.}
\newblock {\em J. Differential Equations} {\bf 259}(10): 5040--5073 (2015).


\bibitem{DabbPal18} D'Abbicco M., Palmieri  A., 
\newblock{A note on $L^p-L^q$ estimates for semilinear critical dissipative Klein–Gordon equations.}
\newblock{\em J. Dyn. Differ. Equ.} {\bf 33}: 63--74 (2021). https://doi.org/10.1007/s10884-019-09818-2


\bibitem{Da} Darboux G.,
\newblock{\em Le\c{c}ons sur la th\'{e}orie g\'{e}n\'{e}rale des surfaces et les applications g\'{e}om\'{e}triques du calcul infinit\'{e}simal},
Gauthier-Villars et fils, Paris, 1896.


\bibitem{DL71} Delache S., Leray J.,
\newblock{Calcul de la solution \'{e}l\'{e}mentaire de l'op\'{e}rateur d’Euler-Poisson-Darboux et de l'op\'{e}rateur de Tricomi-Clairaut, hyperbolique, d'ordre 2.}
\newblock{\em Bulletin de la S. M. F.} {\bf 99}: 313--336 (1971). 

\bibitem{DW53}  Diaz J.B., Weinberger H.F., 
\newblock{A solution of the singular initial value problem for the Euler–Poisson–Darboux equation.}
\newblock{\em  Proc. Amer. Math. Soc.} {\bf 4}: 703--715 (1953). 

\bibitem{Eu} Euler L.,
\newblock{\em Institutiones calculi integralis}, Vol. III, Petropoli. 1770. Also in: Leonhardi Euleri Opera Omnia, Series
History of Mathematical Sciences, Birkh\"auser Basel, 1989.


\bibitem{Fuj66} Fujita H.,
\newblock{On the blowing up of solutions of the Cauchy problem for $u_t=\Delta u+ u^{1+\alpha}$.} {\em J. Fac. Sci. Univ. Tokyo} {\bf 13}: 109--124 (1966).


 
 
\bibitem{Geo97}  Georgiev V., Lindblad H., Sogge C.D.,
\newblock {Weighted Strichartz estimates and global existence for semi-linear wave equations,}
\newblock { Amer. J. Math.} {\bf 119}(6): 1291--1319 (1997).

\bibitem{GL22} Girardi G., Lucente S.,
\newblock{Lifespan estimates for a special quasilinear time-dependent damped wave equation}. \newblock{In Cerejeiras P., Reissig M., Sabadini I., Toft J. eds} \newblock{\em Current Trends in Analysis, its Applications and Computation.} \newblock{Trends in Mathematics} (2022).  https://doi.org/10.1007/978-3-030-87502-2$\_$61



\bibitem{Glas81B} Glassey R.T.,
\newblock {Finite-time blow-up for solutions of nonlinear wave equations,}
\newblock { Math Z.} {\bf 177}(3): 323--340 (1981).


\bibitem{Glas81}  Glassey R.T.,
\newblock {Existence in the large for $\square u = F(u)$ in two space dimensions,}
\newblock { Math Z.} {\bf 178}(2): 233--261 (1981).

 






\bibitem{HHP20} Hamouda M., Hamza M.A., Palmieri A.,
\newblock {A note on the nonexistence of global solutions to the semilinear wave equation with nonlinearity of derivative-type in the generalized Einstein--de Sitter spacetime.}  
\newblock{\em  Commun. Pure Appl. Anal.} {\bf 20}(11): 3703--3721 (2021). doi: 10.3934/cpaa.2021127



\bibitem{HWY17} He D.Y, Witt I., Yin H.C.,
\newblock{On the global solution problem for semilinear generalized Tricomi equations, I.}
\newblock{\em Calc. Var.} {\bf 56}, 21 (2017).


\bibitem{HWY16} He D.Y, Witt I., Yin H.C.,
\newblock{On the global solution problem of semilinear generalized Tricomi equations, II.}
\newblock{\em Pac. J. Math.} {\bf 314}(1): 29--80 (2021).

\bibitem{HWY17d2} He D.Y, Witt I., Yin H.C.,
\newblock{On semilinear Tricomi equations with critical exponents or in two space dimensions.}
\newblock{\em J. Differential Equations} {\bf 263}(12): 8102--8137 (2017).

\bibitem{HWY20} He D.Y, Witt I., Yin H.C.,
\newblock{On the Strauss index of semilinear Tricomi equation.}
 \newblock{\em Commun. Pure Appl. Anal.} {\bf 19}(10): 4817--4838 (2020). doi: 10.3934/cpaa.2020213 
 
 
\bibitem{Hel11} Helgason S.,
\newblock{\em Integral Geometry and Radon Transforms}
\newblock{Springer} , New York, 2011.



\bibitem{IS17}  Ikeda M., Sobajima M., 
\newblock{Life-span of solutions to semilinear wave equation with time-dependent critical damping for specially localized initial data.}
\newblock{\em Math. Ann.}  {\bf 372}(3/4):1017--1040 (2018).  



\bibitem{John79}  John F.,
\newblock {Blow-up of solutions of nonlinear wave equations in three space dimensions,}
\newblock{ Manuscripta Math.} {\bf 28}(1-3): 235--268 (1979).

	
\bibitem{Kato80} Kato T.,
\newblock{ Blow-up of solutions of some nonlinear hyperbolic equations,} \newblock{ Comm. Pure Appl. Math.} {\bf 33}(4) 501--505 (1980).
	



\bibitem{Lai20} Lai N.A.,
\newblock{Weighted $L^2-L^2$ estimate for wave equation in $\mathbf{R}^3$ and its applications,}
\newblock{The role of metrics in the theory of partial differential equations,} 
\newblock{\em Adv. Stud. Pure Math.} {\bf 85}, (2020): 269--279. DOI: 10.2969/aspm/08510269


\bibitem{LST20} Lai N.A., Schiavone N.M., Takamura  H.,
\newblock{Heat-like and wave-like lifespan estimates for solutions of semilinear damped wave equations via a Kato's type lemma.}
\newblock {\em J. Differential Equations} {\bf 269}(12): 11575--11620 (2020).

\bibitem{LTW17} Lai  N. A., Takamura  H., Wakasa,K., 
\newblock{Blow-up for semilinear wave equations with the scale invariant damping and super-Fujita exponent.}
\newblock{\em J. Differential Equations} {\bf 263}(9): 5377--5394 (2017).

\bibitem{LZ21} Lai N.A., Zhou Y.,
\newblock{Global existence for semilinear wave equations with scaling invariant damping in 3-D,}
\newblock{\em Nonlinear Anal.} {\bf 210}, 112392 (2021). https://doi.org/10.1016/j.na.2021.112392


\bibitem{LinSog96} Lindblad H., Sogge C.,
\newblock{ Long-time existence for small amplitude semilinear wave
equations.}
\newblock{ Amer. J. Math.} {\bf 118}(5): 1047--1135 (1996).

\bibitem{L18} Lucente S.,
\newblock{Critical exponents and where to find them.}
{\em Bruno Pini Mathematical Analysis Seminar}  {\bf 9}(1):102--114 (2018).
%




\bibitem{MPbook} Mitidieri E.,  Pohozaev S. I., (2001). 
\newblock{A priori estimates and the absence of solutions of nonlinear partial differential equations and inequalities.} \newblock{\em  Proc. Steklov Inst. Math.} {\bf 234}:1-362. 

\bibitem{NPR16} Nunes do Nascimento W.,  Palmieri A., Reissig  M.,
\newblock{Semi-linear wave models with power non-linearity and scale-invariant time-dependent mass and dissipation.}
\newblock{\em Math. Nachr.}  {\bf 290}(11/12): 1779--1805 (2017). 


\bibitem{OLBC10} Olver F.W.J., Lozier D.W., Boisvert R.F., Clark C.W. (eds.), 
\newblock{\em NIST Handbook of Mathematical Functions.} \newblock{Cambridge University Press, New York} (2010).



\bibitem{Pal18odd}   Palmieri A., 
\newblock{Global existence results for a semilinear wave equation with scale-invariant damping and mass in odd space dimension.} 
\newblock{ In D'Abbicco  M.,  Ebert M.R, Georgiev V., Ozawa T. eds.} \newblock{\em New Tools for Nonlinear PDEs and Application}, \newblock{Trends in Mathematics.} (2019). https://doi.org/10.1007/978-3-030-10937-0$\_$12  


\bibitem{Pal18even} Palmieri A., 
\newblock{A global existence result for a semilinear wave equation with scale-invariant damping and mass in even space dimension.} \newblock{\em  Math. Methods Appl. Sci.} {\bf 42}(8), 2680--2706 (2019). https://doi.org/10.1002/mma.5542.




 

\bibitem{Pal19RF} Palmieri  A., 
\newblock{Integral representation formulae for the solution of a wave equation with time-dependent damping and mass in the scale-invariant case,} 
\newblock{\em  Math. Methods Appl. Sci.} {\bf 44}(17): 13008--13039 (2021).  https://doi.org/10.1002/mma.7603.



\bibitem{Pal20EdSmu} Palmieri  A.,
\newblock{Blow-up results for semilinear damped wave equations in Einstein-de Sitter spacetime.}  
\newblock{\em Z. Angew. Math. Phys.} {\bf 72}, 64 (2021). https://doi.org/10.1007/s00033-021-01494-x

\bibitem{Pal21} Palmieri A.,
\newblock{On the critical exponent for the semilinear Euler-Poisson-Darboux-Tricomi equation with power nonlinearity.}
Preprint, arXiv:2105.09879v2, (2024)


\bibitem{PalRei18} Palmieri  A., Reissig  M., 
\newblock{A competition between Fujita and Strauss type exponents for blow-up of semi-linear wave equations with scale-invariant damping and mass.}
\newblock{\em J. Differential Equations} {\bf 266}(2/3): 1176--1220  (2019). https://doi.org/10.1016/j.jde.2018.07.061.

\bibitem{PalTak22} Palmieri A., Takamura H.,
\newblock{On a semilinear wave equation in anti-de Sitter spacetime: The critical case}
\newblock{\em J. Math. Phys.} {\bf 63}, 111505 (2022). https://doi.org/10.1063/5.0086614

\bibitem{PT18} Palmieri A., Tu Z., 
\newblock{Lifespan of semilinear wave equation with scale invariant dissipation and mass and sub-Strauss power nonlinearity.} \newblock{\em J. Math. Anal. Appl.} {\bf 470}(1): 447--469 (2019). https://doi: 10.1016/j.jmaa.2018.10.015.


\bibitem{Poi} Poisson S.D.,
\newblock{M\'{e}moire sur l’int\'{e}gration des \'{e}quations lin\'{e}aires aux differences partielles.}
\newblock{\em J. de L'\'{E}cole Polytechechnique}, Ser. 1., {\bf 19}:  215--248 (1823).


	\bibitem{Scha85} Schaeffer J.,
\newblock {The equation $u_{tt}-\Delta u = |u|^p$ for the critical value of $p$,}
\newblock { Proc. Roy. Soc. Edinburgh Sect. A.} {\bf 101}(1-2): 31--44 (1985).





\bibitem{Sid84}  Sideris T.C.,
\newblock { Nonexistence of global solutions to semilinear wave equations in high dimensions,}
\newblock { J. Differential Equations} {\bf 52}(3): 378--406 (1984).


\bibitem{Str81} Strauss W.A.,
\newblock{Nonlinear scattering theory at low energy.}
\newblock{\em J. Funct. Anal.} {\bf 41}(1): 110--133  (1981).
	

\bibitem{Sun21} Sun Y.,
\newblock{ Sharp lifespan estimates for subcritical generalized semilinear Tricomi equations.}
\newblock{\em Math. Meth. Appl. Sci.} {\bf 44}(13): 10239--10251 (2021). 

\bibitem{Tak15} Takamura H.,
\newblock{Improved Kato’s lemma on ordinary differential inequality and its application to semilinear wave equations,}
\newblock{Nonlinear Anal.} {\bf 125}: 227--240  (2015).

\bibitem{TakWak11} Takamura H., Wakasa K.,
\newblock{The sharp upper bound of the lifespan of solutions to critical semilinear wave equations in high dimensions,}
\newblock{\em J. Differential Equations} {\bf 251}(4/5):  1157--1171 (2011).

\bibitem{Tri} Tricomi F.,
\newblock{Sulle equazioni lineari alle derivate parziali di $2^\circ$ ordine, di tipo misto.}
\newblock{\em Atti Accad. Naz. Lincei Cl. Sci. Fis. Mat. Natur.} Ser. V {\bf 30}(2): 495--498 (1921).

%
%
%
%
%




\bibitem{TL19}  Tu Z.,  Lin  J., 
\newblock{Lifespan of semilinear generalized Tricomi equation with Strauss type exponent.} 
Preprint, arXiv:1903.11351v2 (2019).

	
	


\bibitem{WakYor19} Wakasa K., Yordanov B.,
\newblock{Blow-up of solutions to critical semilinear wave equations with variable coefficients,}
\newblock{\em J. Differential Equations} {\bf 266}(9): 5360--5376 (2019).

\bibitem{Wak14}  Wakasugi Y., 
\newblock{Critical exponent for the semilinear wave equation with scale invariant damping.}
\newblock{In: Ruzhansky, M.,  Turunen, V. eds.} \newblock{\em Fourier Analysis.} \newblock{Trends in Mathematics. Birkh\"auser, Cham} (2014), https://doi.org/10.1007/978-3-319-02550-6$\_$19.

\bibitem{Wei54} Weinstein A.,
\newblock{The singular solutions and the Cauchy problem for generalized Tricomi equations.}
\newblock{\em  Comm. Pure Appl. Math.} {\bf 7}: 105--116  (1954).

\bibitem{Yag04}  Yagdjian K.,  
\newblock{ A note on the fundamental solution for the Tricomi-type equation in the hyperbolic domain,}
\newblock{\em  J. Differential Equations} {\bf 206}(1): 227--252 (2004).


\bibitem{Yag06}  Yagdjian K.,  
\newblock{ Global Existence for the n-Dimensional Semilinear Tricomi-Type Equations.} \newblock{\em Comm. Partial Differential Equations} {\bf 31}(6): 907--944 (2006).

\bibitem{Yag07} Yagdjian K.,  
\newblock{The self-similar solutions of the Tricomi-type equations,}
\newblock{\em Z. Angew. Math. Phys.} {\bf 58}(4): 612--645 (2007).


\bibitem{Yag10} Yagdjian K.,  
\newblock{ Fundamental solutions for hyperbolic operators with variable coefficients,}
\newblock{\em Rend. Istit. Mat. Univ. Trieste} {\bf 42}, suppl.: 221--243 (2010).


 

\bibitem{Yag15} Yagdjian K.,  
\newblock{Integral transform approach to generalized Tricomi equations.}
\newblock{\em  J. Differential Equations} {\bf 259}(11): 5927--5981 (2015).







\bibitem{YZ06} Yordanov B.T., Zhang Q.S.,
\newblock{Finite time blow up for critical wave equations in high dimensions.}
\newblock{\em J. Funct. Anal.} {\bf 231}(2): 361-374 (2006).


\bibitem{Zhou07}  Zhou Y.,
	\newblock{Blow up of solutions to semilinear wave equations with critical exponent in high dimensions,} 
	\newblock{Chin. Ann. Math. Ser. B} {\bf 28}(2): 205--212 (2007).

\bibitem{ZH14} Zhou Y., Han W., 
\newblock{Life-span of solutions to critical semilinear wave equations.}
\newblock{Commun. Partial. Differ. Equ.} {\bf 39}(3): 439--451 (2014).
	

\end{thebibliography}
\end{document}